\newtheorem{thm}{Theorem}[section]
\newtheorem{cor}[thm]{Corollary}
\newtheorem{lem}[thm]{Lemma}
\newtheorem{prop}[thm]{Proposition}
\theoremstyle{definition}
\newtheorem{defn}[thm]{Definition}
\newtheorem{question}{Question} 
\newtheorem{ex}[thm]{Examples}
\newtheorem{example}[thm]{Example}
\newtheorem{warning}[thm]{Caveat}
\theoremstyle{remark}
\newtheorem{rem}[thm]{Remark}
\numberwithin{equation}{section}
\newcommand{\Z}{\mathbb Z}
\newcommand{\C}{\mathbb C}
\newcommand{\R}{\mathbb R}
\newcommand{\Pro}{\mathbb P}
\newcommand{\gr}{\mathrm{gr}}
\font \rus= wncyr10
\newcommand{\sha}{\, \hbox{\rus x} \,}
\newcommand{\MT}{\mathcal{MT}}
\newcommand{\alg}{\mathrm{alg}}
\newcommand{\Isom}{\mathrm{Isom}}
\newcommand{\zetam}{\zeta^{ \mathfrak{m}}}
\newcommand{\Q}{\mathbb Q}
\newcommand{\Li}{\mathrm{Li}}
\newcommand{\U}{\mathcal{U}}
\newcommand{\To}{\longrightarrow}
\newcommand{\A}{\mathbb{A}}
\newcommand{\G}{\mathbb{G}}
\newcommand{\tone}{\overset{\rightarrow}{1}\!}
\newcommand{\opo}{{}_0 \Pi_{0}}
\newcommand{\Or}{\mathcal{O}}
\newcommand{\mm}{\mathfrak{m} }
\newcommand{\id}{\mathrm{id} }
\newcommand{\Lie}{\mathrm{Lie}\,}
\newcommand{\per}{\mathrm{per}}
\newcommand{\uu}{\mathfrak{u}}
\newcommand{\Spec}{\mathrm{Spec} \,}
\newcommand{\Pe}{\mathcal{P}}
\newcommand{\comp}{\mathrm{comp}}
\newcommand{\dr}{\omega}
\newcommand{\xpio}{{}_x\Pi^{\dr}_0}
\newcommand{\opio}{{}_0\Pi^{\dr}_0}
\newcommand{\xco}{{}_xc_0}
\newcommand{\xgo}{{}_x\gamma_0}
\newcommand{\co}{\mathrm{co}}
\newcommand{\ev}{\mathrm{ev}}
\newcommand{\GdrS}{G^{\dr}_{S}}
\newcommand{\UdrS}{U^{\dr}_{S}}
\newcommand{\PF}{\underline{\Pi}}
\begin{document}

\author{Francis Brown}
\begin{title}[Integral points  on curves and motivic periods]{Integral points  on curves, the unit equation,  and motivic periods}\end{title}
\maketitle

 \section{Introduction}

This paper recasts some of  the literature which has grown out of Kim's extension of Chabauty's method for bounding points on curves in the language of periods. We retrieve many  of the main results of the papers \cite{Kim1},  \cite{KimAlb}, \cite{Hadian}, \cite{DanCWewers} as a simple consequence of two  constructions: the universal comparison isomorphism applied to the unipotent fundamental group, and an elementary lemma in linear algebra.
We also show how the method can be made motivic, explicit, and extended to  divisors.  
 
The main idea can  be summarised as follows.  Suppose, for simplicity, that $X$ is a smooth affine scheme over $\Q$. Integration provides a natural comparison isomorphism between  its   algebraic de Rham and Betti cohomology \begin{eqnarray} H^n_{dR}(X;\Q) \otimes_{\Q} \C  & \overset{\sim}{\To} & H^n(X(\C))\otimes_{\Q} \C  \label{introcomp}  \\
  { [}\omega] & \mapsto & ([\gamma] \mapsto \int_{\gamma} \omega)  \nonumber 
\end{eqnarray} 
where $\omega$ is a regular differential  $n$-form on $X$ over $\Q$, and $\gamma$ a smooth topological chain on $X(\C)$  of real dimension $n$.  This isomorphism  is transcendental, but can be made  algebraic by replacing the ring $\C$  with a suitable ring of `motivic' periods $\Pe_{C}^{\mm}$, where $C$ is some version of   a Tannakian category of motives. For  its definition, and some candidates for $C$, see \S\ref{sectCperiods}. One deduces a universal comparison isomorphism
\begin{eqnarray} H^n_{dR}(X;\Q) \otimes_{\Q} \Pe_C^{\mm}  & \overset{\sim}{\To} & H^n(X(\C))\otimes_{\Q} \Pe_C^{\mm}  \label{introPcomp}   \\
   {[}\omega] & \mapsto & ([\gamma] \mapsto I^{\mm}_{\gamma}(\omega))  \nonumber 
\end{eqnarray} 
   where $I^{\mm}_{\gamma}(\omega)$ is a `motivic' version of the integral of $\omega$ along $\gamma$.  The ring $\Pe^{\mm}_C$ is a $\Q$-algebra equipped with a period homomorphism 
   $$\mathrm{per} : \Pe^{\mm}_C \To \C$$
   satisfying $\mathrm{per}  (I^{\mm}_{\gamma}(\omega)) = \int_{\gamma}\omega$, so $(\ref{introcomp})$ can be retrieved 
   from $(\ref{introPcomp})$ by applying the map  $\per$.
     The point is that the isomorphism $(\ref{introPcomp})$
   is  defined over $\Q$. Furthermore,  standard conjectures about mixed motives suggest that the subspace of $\Pe^{\mm}_C$ generated by the motivic periods of  $X$ should be  tightly constrained.  This implies  relations between the $I^{\mm}_{\gamma}(\omega)$ for varying $\gamma$ and $\omega$, and 
      can be used to infer arithmetic information about integral points on $X$ or varieties related to $X$. 
   
   Since the general  theory of motives is largely conjectural, we have to make do with  mixed Tate motives over  number fields, for which 
    the required bounds on motivic periods  are known. This is a consequence of Borel's deep theorems on the algebraic $K$-theory of number fields.
  As a consequence,  we mainly consider the case when $X$ is the  punctured projective line. We now state some results that can be obtained in this setting before returning to a more general situation in \S\ref{sectintrogen}.

\subsection{Main example: the unit equation}  Let $X = \Pro^1\backslash \{0,1, \infty\}$ and let $S$ be a finite set of rational primes. 
Let $X_S: = X(\Z[S^{-1}])$ denote the set of $S$-integral points on $X$.
It is the set of solutions to the unit equation 
$$u+ v = 1\ ,$$
where $u, v$ are rational numbers whose numerator and denominator have prime factors contained in $S$. 
This set  is  finite, as shown by   Siegel \cite{Siegel} and reproved by Kim \cite{Kim1} using the unipotent fundamental group.  The problem of  understanding $S$-units is related to a number of questions in diophantine geometry.

Let $\MT_S$ denote the category of mixed Tate motives ramified only at $S$. It is a full subcategory of the Tannakian category of mixed Tate motives over $\Q$ \cite{DeGo}.

The classical polylogarithms can be expressed  iterated integrals on $X(\C)$:
$$\Li_n(x) = \sum_{k\geq 1 } {x^k  \over k^n} = \int_{0}^x {dz \over  1-z} {dz \over z} \ldots {dz\over z} \ .$$
For $x \in X_S$, there exists a motivic version $\Li_n^{\mm}(x)$, which is an  element of the ring of effective motivic periods $\Pe^{\mm,+}_{\MT_S}$ of the category $\MT_S$,   satisfying
$\mathrm{per} ( \Li_n^{\mm}(x))= \Li_n(x)$. The ring $\Pe^{\mm,+}_{\MT_S}$ is a graded $\Q$-vector space, finite dimensional in every weight.  It follows that if there are  many integral points $x_1,\ldots, x_N \in X_S$, there must exist  $\Q$-linear relations between the $\Li_n^{\mm}(x_i)$, and hence their periods $\Li_n(x_i)$. This imposes a  constraint on the $x_i$.  One can replace the classical polylogarithms with  multiple polylogarithms or indeed any iterated integrals on $X$. The entirety of this paper reduces to this simple idea. Technicalities arise only if one wants to make this constraint explicit, or if one wishes to generalise to the non mixed-Tate case \S\ref{sectintrogen}.

\subsubsection{Effectivity} The problem is that the coefficients in such a relation will depend on the points $x_i$, so this method  is not   effective.  In order to obtain a relation which is universal for all points, one must replace $\Pe^{\mm,+}_{\MT_S}$ with  a graded ring $\Pe^{\uu}_{\MT_S}$ of unipotent de Rham periods --- a key technical point being that the unipotent version of $2  \pi i $ in the latter ring is trivial. This is equivalent to reducing `modulo $\pi$' and increases the number of available relations.
The ring $\Pe^{\uu}_{\MT_S}$ is simply  the affine ring $\Or(U_{\MT_S})$ of the unipotent radical of the Tannaka group of $\MT_S$ with respect to the de Rham fiber functor.  The Frobenius at $p$ endows it with a $p$-adic period map 
$$\mathrm{per}_p : \Pe^{\uu}_{\MT_S} \To \Q_p $$
for all $p \notin S$, and contains  objects $\Li_n^{\uu}(x_i)$ of degree $n$, whose periods are  `single-valued' $p$-adic versions of the classical polylogarithms. To gain some intuition for these objects, observe that  $\Li_1^{\uu}(x) = - \log^{\uu}(1-x)$, and that  the unipotent logarithm satisfies the usual functional equation: for any $x\in X(\Q)$, 
$$\log^{\uu}(x) = \sum_p v_p(x)  \log^{\uu}(p)\ ,$$
where the sum is over all primes $p>0$, and $v_p$ denotes the $p$-adic valuation. The elements $\log^{\uu}(p)$ are algebraically 
independent over $\Q$ \cite{NotesMot}. The higher unipotent polylogarithms  generalize   $p$-adic valuations of points $x \in X_S$.

 By applying an elementary lemma in linear algebra \S\ref{sect: co}  to the ring $\Pe^{\uu}_{\MT_S}$  and computing its dimensions in each degree, we  deduce the following theorem.
 
\begin{thm} \label{introthm} Let $s= |S|$ and let $k> 2s-1.$ There  are integers $w=w(s,k)$,  $N=N(s,k)$ such that, for any choice of 
$N$ linearly independent polynomials 
$$P_1,\ldots, P_N \quad \in \quad  \Q [ \log^{\uu}(x), \Li_1^{\uu}(x), \ldots, \Li_k^{\uu}(x) ] $$
which are homogeneous of degree $w$,  the equation 
\begin{equation}\label{detPeqn} \det (P_i(x_j)_{1\leq i,j\leq N}) =0  
\end{equation}
is satisfied for any $N$ integral points 
$x_1,\ldots, x_N \in X_S.$
\end{thm}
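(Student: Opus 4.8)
The plan is to deduce Theorem~\ref{introthm} from two ingredients already announced in the introduction: the fact that $\Pe^{\uu}_{\MT_S} = \Or(U_{\MT_S})$ is a graded $\Q$-algebra which is finite-dimensional in each degree, together with the ``elementary lemma in linear algebra'' of \S\ref{sect: co}. First I would recall the structure of $U_{\MT_S}$: by the theorem of Deligne--Goncharov building on Borel, its Lie algebra is free on one generator in each degree $3,5,7,\ldots$ together with $s$ generators in degree $1$ (one for each prime in $S$), so $\Pe^{\uu}_{\MT_S}$ is (dual to) the universal enveloping algebra of this free Lie algebra. Consequently the generating series for $\dim_{\Q}$ of its degree-$w$ part is the well-known rational function $\bigl(1 - s\,t - t^3 - t^5 - \cdots\bigr)^{-1}$, and in particular $\dim (\Pe^{\uu}_{\MT_S})_w \to \infty$ as $w \to \infty$ whenever $s \geq 1$; more precisely the subalgebra generated by the degrees $\leq k$ already has dimensions growing without bound once $k > 2s-1$, which is exactly the hypothesis of the theorem (for $k \le 2s-1$ the ``depth $1$ part in degrees $\le k$'' is too small to force growth).

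Next I would identify the relevant finite-dimensional space. The motivic iterated integrals $\log^{\uu}(x)$ and $\Li_1^{\uu}(x),\ldots,\Li_k^{\uu}(x)$ are elements of $\Pe^{\uu}_{\MT_S}$ of degrees $1,1,2,\ldots,k$ respectively, depending on the integral point $x \in X_S$. For a homogeneous polynomial $P$ of degree $w$ in these, $P(x)$ is a well-defined element of the degree-$w$ graded piece $V := (\Pe^{\uu}_{\MT_S})_w$, which is a $\Q$-vector space of some finite dimension $d = d(s,w)$. I would then set $N := d+1$ and choose $w = w(s,k)$ large enough that the span of the monomials of degree $w$ in $\log^{\uu},\Li^{\uu}_1,\dots,\Li^{\uu}_k$ has dimension $\geq N$ inside $V$ --- this is where $k > 2s-1$ is used, to guarantee that such a $w$ exists, i.e.\ that arbitrarily large-dimensional subspaces are reachable using only polylogarithms of weight $\le k$. (One can make $w$ and $N$ fully explicit from the generating function above.)

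Now the linear-algebra lemma does the rest. Given $N$ integral points $x_1,\dots,x_N \in X_S$, we obtain $N$ vectors $v_j := P(x_j) \in V$ where, for a \emph{single} fixed polynomial $P$, the relation $\det(P_i(x_j)) = 0$ would be immediate since $N > \dim V$ forces the $v_j$ to be linearly dependent. The point of the lemma in \S\ref{sect: co} is to upgrade this to the statement with \emph{$N$ different} polynomials $P_1,\dots,P_N$: one considers the $N\times N$ matrix whose $(i,j)$ entry is $P_i(x_j) \in V$, views the $j$-th column as living in $V$, and argues that since all columns lie in the $d$-dimensional space $V$ with $d < N$, the ``generalized determinant'' (the image of $\det$ under the multiplication map $V^{\otimes N}\to\Pe^{\uu}_{\MT_S}$, or more precisely the relevant exterior-power/coboundary construction of \S\ref{sect: co}) vanishes --- the linear independence of the $P_i$ is what is needed for this vanishing to be non-vacuous, paralleling the classical fact that $N$ vectors in a $d$-dimensional space satisfy one ``Plücker-type'' relation when $N>d$. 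Substituting this back yields $(\ref{detPeqn})$.

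The main obstacle I anticipate is not any single deep input --- Borel's theorem and the freeness of $U_{\MT_S}$ are quoted --- but rather the bookkeeping that makes $w(s,k)$ and $N(s,k)$ explicit and, crucially, verifying that the hypothesis $k > 2s-1$ is \emph{exactly} what is needed for the polylogarithm monomials of bounded weight $k$ to span arbitrarily high-dimensional subspaces of $(\Pe^{\uu}_{\MT_S})_w$ as $w\to\infty$. Concretely one must check that the sub-generating-function $\bigl(1 - s\,t - (t^3 + t^5 + \cdots + t^{2\lfloor (k+1)/2\rfloor - 1})\bigr)^{-1}$ --- or the appropriate variant recording only $\log^{\uu}$ and $\Li^{\uu}_{\le k}$ --- has a pole inside the unit disc precisely when $k > 2s-1$, so that the corresponding dimensions grow. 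The remaining care is to state the linear-algebra lemma in a form general enough to accommodate vectors valued in a graded algebra rather than scalars, which is precisely the content of \S\ref{sect: co} and so may be cited directly.
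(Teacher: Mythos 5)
There is a genuine gap, and it sits at the heart of the argument: you have identified the wrong finite-dimensional space, and the linear-algebra step as you describe it is false. A determinant whose entries all lie in a $d$-dimensional subspace of a commutative ring need not vanish when $N>d$: for instance $\det\left(\begin{smallmatrix} t & 2t\\ 3t & 4t\end{smallmatrix}\right) = -2t^2 \neq 0$ even though all four entries lie on the line $\Q t$. What Lemma \ref{lemdeteqn} actually requires is not that the \emph{values} $P_i(x_j)$ lie in a small space, but that the \emph{evaluation maps} $\phi_j=(P\mapsto P(x_j))$ all lie in a scheme $A$ of linear maps with $\dim_{\Q}\Or(A)<N$; then each $\phi_j$ factors through the single coaction $\co\colon V\to W\otimes \Or(A)$ and the determinant factors through $\bigwedge^N\Or(A)=0$. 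Relatedly, setting $N=\dim(\Pe^{\uu}_{\MT_S})_w+1$ and then asking for $N$ linearly independent elements \emph{inside} $(\Pe^{\uu}_{\MT_S})_w$ is self-contradictory; and since the number of degree-$w$ monomials in $\log^{\uu},\Li^{\uu}_1,\dots,\Li^{\uu}_k$ grows only polynomially in $w$ while $\dim(\Pe^{\uu}_{\MT_S})_w$ grows exponentially, no choice of $w$ could meet your requirement in any case.

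The missing idea is the identification of the correct scheme $A$. In the paper, $V=\gr^W_{2w}\Or(\Pi^{\dr})$ is the space of \emph{abstract} homogeneous polynomials (the $P_i$ live here, via the identification of $\log^{\uu},\Li_1^{\uu},\dots,\Li_k^{\uu}$ with the free polynomial generators $e_0, e_1, e_1e_0,\dots$ of the affine ring of the depth-one quotient $\Pi$), $W=\Pe^{\uu}_S$ is the target ring, and $A=Z=Z^1_{\G_m}(\GdrS,\Pi^{\dr})$ is the scheme of weight-graded algebraic cocycles: each integral point $x$ yields such a cocycle $c^x$, i.e.\ a $\G_m$-equivariant algebra homomorphism $\Or(\Pi^{\dr})\to\Pe^{\uu}_S$ subject to the Galois cocycle constraint. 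One must then prove that $Z$ is representable (Proposition \ref{propZ1Fscheme}) and bound $\dim\Or(Z)_{2w}$: a cocycle is determined degree by degree up to an element of $\mathrm{Hom}(\gr^W_n Q\Or(\Pi^{\dr}),\gr^W_n\mathrm{Prim}\,\Pe^{\uu}_S)$, and $\gr^W_{2n}\mathrm{Prim}\,\Pe^{\uu}_S\cong\mathrm{Ext}^1_{\MT_S}(\Q(0),\Q(-n))$ is computed by Borel's theorem. This gives a Poincar\'e series for $\Or(Z)$ with roughly $2s+\tfrac{k}{2}$ factors of the form $(1-t^a)^{-1}$, against $k+1$ factors for the monomial count; both grow polynomially, and the hypothesis on $k$ versus $s$ is exactly what makes the former growth rate smaller than the latter, so that some $w$ with $\dim V_w>\dim\Or(Z)_w$ exists. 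None of this is captured by your pole-location heuristic for $(1-st-t^3-t^5-\cdots)^{-1}$, which concerns the exponentially growing ring $\Pe^{\uu}_S$ rather than $\Or(Z)$.
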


The integers $w, N$ are easily computable. One can also assume  (\S\ref{SectDepthonegrading}) that the $P_i$ do not involve any logarithms $\log^{\uu}(x)$.   The results of \cite{Kim1}, \cite{DanCWewers} are a consequence of this theorem.
 For $p$ a prime not in $S$, we can  take  the $p$-adic period of $(\ref{detPeqn})$ giving 
$$\det ( \per_p( P_i(x_j)_{1\leq i,j\leq N}))=0\ .$$
It defines an alternating  $p$-adic analytic function vanishing on $\wedge^N X_S$ and yields information about the disposition of  integral points.   For example, if $s=1$, and $k=2$, we can take $w=2, N=2$, set $x_1$ a tangent vector at infinity, and $x_2=x$ to retrieve Coleman's  equation 
$$ 
\det \begin{pmatrix} \Li_2^p(x)  &  \log^p(x) \Li_1^p(x)  \\ {1\over 2} & 1  \end{pmatrix}= 
\Li_2^p(x) - {1 \over 2} \log^p(x) \Li_1^p(x) =0 \qquad \hbox{ for all } x \in X_S\ .$$ 
The main result of \cite{DanCWewers} follows  from an application of  theorem \ref{introthm} with $N=3,w=4$. The approach in that paper used the conjectural non-vanishing of a $p$-adic zeta value.

The theorem can be applied as follows: given $x_1,\ldots, x_{N-1}$ `known' points in $X_S$, a row expansion of the determinant $(\ref{detPeqn})$ gives an explicit equation 
\begin{equation}\label{introPweqn} \sum_{i=1}^N  p_{w_i} P_i(x_N) =0 
\end{equation} 
which can be viewed as a constraint on any further point $x_N\in X_S$. One can  show, by a minimality argument,  that equation $(\ref{introPweqn})$ is not identically zero.  Thus, one has the following dichotomy:  either there are fewer than $N$ points on $X_S$, 
or there exists an explicit equation  for the remaining points on $X_S$.  In either case one reproves the  fact that $X_S$ is finite  \cite{Siegel, Kim1}.
The method easily extends to the cases considered in \cite{Hadian}.

Using the work  of Hardy and Ramunjan on the partition function, one can estimate the value of $N$. 
Unfortunately, it is rather large - possibly  slightly larger than the expected number of points on $X_S$.  In \S\ref{sectVirtual} we show how to replace  points $x_i\in X_S$ with divisors on $X(\Q)$ to   get around this problem.

Besser and de Jeu have shown how to compute the $p$-adic classical polylogarithms explicitly in \cite{Lip}, which raises the possibility of practical applications of the previous two  theorems for studying integral points.  
If one  assumes a version of Beilinson's conjecture, the results stated above carry through to curves of higher genus.  This is discussed in \S \ref{sectfinal}.
More precisely, for sufficiently many integral points $x_1,\ldots, x_N$
on (an integral model of) a curve $X$ of genus $\geq 2$,   the determinant of $p$-adic iterated integrals  along  Frobenius invariant paths from $p$ to $x_i$ should always vanish.  

\vspace{0.2in}

\emph{Acknowledgements.} This paper was partly written during a stay at the IHES in 2015. 
Many thanks to Richard Hain and Ishai Dan-Cohen for discussions on this topic.  The author is partly supported by ERC grant 724638 - `Galois theory of periods and applications'.

 \section{Motivation: non-abelian and `motivic' Abel-Jacobi map} \label{sectintrogen} We shall briefly explain, in this extended introduction,  why the ideas of this paper can be thought of as  a     natural generalisation of the classical Abel-Jacobi map.

\subsection{Abel-Jacobi} Let $X$ be a connected Riemann surface, and $p\in X$ a point.  The Abel-Jacobi map relative to the point $p \in X$ is
defined by 
\begin{eqnarray}  \label{introAJ}
X  &   \To  &  \mathrm{Hom}\big( H^0(X, \Omega^1_X), \C\big) \quad  / \quad  H_1(X;\Z)    \\
x & \mapsto &   \Big( \omega \mapsto \int_{\gamma_x} \omega \pmod \Lambda \nonumber  \Big)
\end{eqnarray}
where $\gamma_x$ is a continuous path from $p$ to $x$, and $\Lambda $ is  the period lattice:
\begin{eqnarray} \Lambda = \mathrm{Im} \Big( H_1(X;\Z) &\To & \mathrm{Hom}(H^0(X, \Omega^1_X),\C)\Big)\ 
\nonumber  \\
{[}\gamma] & \mapsto & (\omega \mapsto \int_{\gamma} \omega)  \nonumber 
\end{eqnarray}
 Since $H^0(X, \Omega^1_X) = F^1 H^1(X;\C)$,  it is customary to write the right-hand side of $(\ref{introAJ})$
as a space of double cosets
\begin{equation} \label{Jacob} F^0   \quad \backslash  \quad H^1(X;\C)^{\vee} \quad  / \quad  H_1(X;\Z) \ . 
\end{equation} 
The group in the middle  is classical or $C^{\infty}$ de Rham cohomology (we shall  denote algebraic de Rham cohomology by a subscript $dR$). 
However, the following form is better adapted to the non-abelian setting:
\begin{equation} \label{JacobBetter}  \mathrm{Hom}(F^1H^1(X;\C),\C)\quad  / \quad  H_1(X;\Z) 
\end{equation} 
This construction can be  generalised  in at least three different ways: first by defining a motivic version of this construction;  next by replacing cohomology with fundamental groups; and finally by changing cohomology theories (fiber functors).

\subsection{`Motivic' version}
 Now let $X$ be a smooth, geometrically connected  algebraic curve over $\Q$ for simplicity, and  suppose that $p, x \in X(\Q)$.  Define the motivic  version of the line integral to be the matrix coefficient (\S\ref{sectHcat}, or  \cite{NotesMot}, \S2) 
 $$I^{\mm}_{\gamma_x}(\omega) = [ H^1(X, \{p,x\}), [\gamma_x], [\omega]]^{\mm}$$
 in the ring of   periods $\Pe^{\mm}_C$ of a suitable category $C$ (for example, the category $\mathcal{H}$ of realisations of \S \ref{sectHcat}). 
 This means that $I_{\gamma_x}(\omega)$ is viewed as an element of the affine ring of tensor isomorphisms from de Rham to Betti fiber functors of the category $C$.
 In the  case $C=\mathcal{H}$,   the object $H^1(X,\{p,x\})$ in $\mathcal{H}$ denotes a triple
 $$(H^1(X(\C), \{p,x\}), H^1_{dR}(X,\{p,x\}), \mathrm{comp})$$
 where the first component is relative singular  (Betti) cohomology, the middle component is relative algebraic de Rham cohomology (both $\Q$-vector spaces) and 
 $\mathrm{comp}$ the canonical comparison isomorphism between their complexifications.  Since its  boundary   is contained in $\{p,x\}$, 
 the path $\gamma_x$  defines a relative homology class:
 $$[\gamma_x ] \qquad \in  \qquad H_1(X(\C), \{p,x\}) \cong H^1(X(\C), \{p,x\})^{\vee}\ .$$
 On the other hand, the holomorphic differential one-form $\omega$ restricts to zero at $x$ and $p$ and has a well-defined class in 
 \emph{relative} algebraic de Rham cohomology $H^1_{dR}(X, \{p,x\})$.
  One way to see this is to consider   the long exact relative cohomology sequence $$H^0_{dR}(\{x,p\}) \rightarrow H^1_{dR}(X,\{x,p\}) \rightarrow H^1_{dR}(X)\rightarrow 0 \ .$$
  By applying the Hodge filtration functor $F^1$, 
   we deduce an isomorphism
    $$F^1 H^1_{dR}(X,\{x,p\}) \overset{\sim}{\To} F^1 H^1_{dR}(X)\ .$$
   It is important to point out that  since $F^1 H^1_{dR}(X) \cong H^0(X, \Omega^1(X))$, we can indeed view the cohomology class of a  holomorphic differential form, via the previous isomorphism,  as a relative cohomology class in a canonical way, independently of $x$.

 Replacing the de Rham-Betti comparison isomorphism $(\ref{introcomp})$ with its universal version $(\ref{introPcomp})$, we can define a `motivic' version of the Abel-Jacobi map.
 
 \begin{defn}  Consider the map 
 \begin{eqnarray}  \label{introAJm}
X(\Q)  &   \To  &  \mathrm{Hom}\big(H^0(X, \Omega^1_X),  \Pe^{\mm}_C\big) \quad  / \quad  H_1(X;\Z)    \\
x & \mapsto &   \Big( \omega \mapsto  I^{\mm}_{\gamma_x}(\omega)    \pmod {\Lambda^{\mm}} \nonumber  \Big)
\end{eqnarray}
where $\Lambda^{\mm}$ is  the lattice of motivic periods of $X$, defined to be
\begin{eqnarray} \Lambda^{\mm} = \mathrm{Im} \Big( H_1(X;\Z) &\To & \mathrm{Hom}(H^0(X, \Omega^1_X),\Pe^{\mm}_C)\Big)\ 
\nonumber  \\
{[}\gamma] & \mapsto & (\omega \mapsto  I^{\mm}_{\gamma}(\omega))  \nonumber 
\end{eqnarray}
where $I^{\mm}_{\gamma}(\omega)$ is the motivic period $[H^1(X), [\gamma], [\omega]]^{\mm}$.
\end{defn}

The classical Abel-Jacobi map  $(\ref{introAJ})$ is recovered from $(\ref{introAJm})$ by composing with the period homomorphism  $\mathrm{per}: \Pe^{\mm}_C \rightarrow \C$, which induces
$$\mathrm{Hom}\big(H^0(X, \Omega^1_X),  \Pe^{\mm}_C\big)  \To \mathrm{Hom}\big(H^0(X, \Omega^1_X),  \C)\ .$$

Furthermore, the map $(\ref{introAJm})$ extends in the usual manner to divisors of degree zero:
\begin{eqnarray} \label{introDivmap} 
\mathrm{Div}^0(X(\Q)) &  \To  &  \mathrm{Hom}\big( H^0(X, \Omega^1_X), \Pe^{\mm}_C \big) \quad  / \quad  H_1(X;\Z)   \\ 
\sum_i n_i (x_i -p) & \mapsto &    \Big( \omega \mapsto \sum_{i} n_i I^{\mm}_{\gamma_{x_i}}(\omega) \mod \Lambda^{\mm} \Big)\nonumber \ . 
\end{eqnarray} 
It  vanishes on principle divisors if $X$ is compact.  
There is an obvious extension to curves defined over a number fields $k$, where the Betti cohomology is relative to an embedding $k\hookrightarrow  \C$. To keep the discussion simple, we have chosen to work only over $\Q$ for this introduction.

\subsection{Non-abelian version}
One can now generalize by replacing homology with a suitable notion of algebraic fundamental  group.  Here we shall consider the unipotent fundamental groupoid. It is a groupoid object in a suitable category $C$ which has Betti and de Rham realisations,  denoted $\pi_1^B$ and $\pi_1^{dR}$ respectively. 
The Betti component is simply the Mal\v{c}ev completion of the topological fundamental groupoid and is equipped with a natural map, for any points $p,x \in X(\Q)$:
$$   \pi_1^{\mathrm{top}}(X,p,x) \To \pi_1^B(X,p,x)(\Q) \ , $$
where $\pi_1^{\mathrm{top}}$ denotes the homotopy classes of paths in $X(\C)$.
The Betti and de Rham fundamental groupoids  are related by a universal comparison:
\begin{equation} \label{pi1univcomp} \pi_1^{B}(X,p,x) \times \Pe^{\mm}_C \overset{\sim}{\To} \pi_1^{dR}(X,p,x) \times \Pe^{\mm}_C\ ,
\end{equation}
which, after applying the period homomorphism, becomes an isomorphism over $\C$. 
The literature would suggest  replacing $(\ref{Jacob})$
with  the higher Albanese manifolds \cite{HaAlb}
$$    F^0 \quad  \backslash \quad  \pi_1^{dR}(X,p)(\C)  \quad  /  \quad \pi_1^{\mathrm{top}}(X,p) \ .$$
Here $F^0$ denotes a  subgroup scheme of $\pi_1^{dR}(X,p) $ defined using the Hodge filtration. There is an analogous notion 
for the de Rham torsor of paths $\pi_1^{dR}(X,p,x)$ (\S \ref{HainAlba}).
A problem with this construction is that although the coset spaces
$$F^0 \backslash \pi_1^{dR}(X,p) \cong F^0 \backslash \pi_1^{dR}(X,p,x)$$
are isomorphic as schemes, they are not canonically so, and the resulting constructions (including the coefficients of any Coleman function constructed in this manner) will depend implicitly on the point $x$. To circumvent  this, we shall instead use 
a scheme  $ \underline{\pi}_1^{dR}$, also defined using the Hodge filtration,  which admits a dominant morphism
\begin{equation} \label{pi1topi1bar} \pi_1^{dR}(X,p,x) \To   \underline{\pi}_1^{dR}(X,p,x)
\end{equation}
but is now independent of the point $x$.  It has a \emph{canonical de Rham path} 
$${}_x 1_p \in  \underline{\pi}_1^{dR}(X,p,x)(k) \ . $$
The point is that there is now a canonical isomorphism,
\begin{equation}\label{introcaniso}   \Or (\underline{\pi}_1^{dR}(X,p,x))  \overset{\sim}{\To} \Or (\underline{\pi}_1^{dR}(X,p)) \ 
\end{equation}
which generalises the crucial fact $F^1 H^1_{dR}(X,\{x,p\}) = F^1 H^1_{dR}(X)$.  Furthermore, the elements of the affine ring of $\underline{\pi}_1^{dR}$ can be viewed as holomorphic iterated integrals, which again generalises the holomorphic line integrals in the classical situation $(\ref{introAJ})$. 
The morphism $(\ref{pi1topi1bar})$ factors through $F^0 \backslash \pi^{dR}_1(X,p,x)$, but the latter is not isomorphic to $\underline{\pi}_1^{dR}(X,p,x)$ in general, except  when $H_{dR}^1(X)$ is  mixed Tate.

We obtain a sequence of maps
$$ \pi_1^{\mathrm{top}}(X,p) \To \pi_1^B(X,p)(\Q) \overset{(\ref{pi1univcomp})}{\To} \pi_1^{dR}(X,p)(\Pe^{\mm}_C) $$
 
 \begin{defn}  Out of this, we  can define  a map 
\begin{eqnarray} \label{AJnonab} 
 X(\Q)  & \To &       \mathrm{Hom}_{\alg} \big( \Or(\underline{\pi}_1^{dR}(X,p) ) ,  \Pe^{\mm}_C\big)\quad  /  \quad \pi_1^{\mathrm{top}}(X,p)   \\
x & \mapsto &  \big(  \omega \mapsto I^{\mm}_{\gamma_x}(\omega)    \big) \nonumber 
\end{eqnarray} 
where $\omega$ is an element of   $\Or (\underline{\pi}_1^{dR}(X,p))$, and $\gamma_x$ is a  path from $p$ to $x$. 
The notation $\mathrm{Hom}_{\alg}$ denotes algebra homomorphisms, and so  $(\ref{AJnonab})$ could also be written
$$X(\Q) \To   \underline{\pi}_1^{dR}(X,p) (\Pe^{\mm}_C) \quad  / \quad   \pi_1^{\mathrm{top}}(X,p)\ .$$
 The  `motivic period' $I^{\mm}_{\gamma_x}(\omega)$ is defined by the matrix coefficient 
$$I^{\mm}_{\gamma_x}(\omega)=  [\Or(\pi_1(X,p,x)), \gamma_x, \omega]^{\mm}  \qquad \in \qquad \Pe^{\mm}_C $$  and is the motivic version of  the  iterated integral of $\omega$ along the path $\gamma_x$.  

The right hand side of $(\ref{AJnonab})$  is a direct generalisation of $(\ref{JacobBetter})$. 

\end{defn}

 The key point is that $\omega \in \Or (\underline{\pi}_1^{dR}(X,p))$ is viewed as an element in the ring $\Or(\underline{\pi}^{dR}_1(X,p,x)) \subset 
 \Or(\pi^{dR}_1(X,p,x))$
via the canonical isomorphism $(\ref{introcaniso})$, and illustrates why the Albanese manifolds cannot be used  without introducing some hidden choices.

Applying the period homomorphism, we obtain a non-abelian Abel-Jacobi  map 
\begin{eqnarray} X(\Q)   & \To&      \underline{\pi}_1^{dR}(X,p )(\C) \quad / \quad \pi_1^{\mathrm{top}}(X,p) \nonumber \\
x & \mapsto & \big(  \omega \mapsto \int^p_{x} \omega \big) \ . \nonumber
\end{eqnarray} 
   The iterated integral makes sense for any point $x\in X(\C)$ and so this map extends to  a smooth and in fact, holomorphic,  map
    \begin{equation} X(\C) \To   \underline{\pi}_1^{dR}(X,p )(\C) \quad / \quad \pi_1^{\mathrm{top}}(X,p)\ .
   \end{equation}
     It extends equally well to divisors of degree zero, in the obvious manner.  Furthermore, there is no reason to restrict to curves: the construction works for any  smooth geometrically connected algebraic variety $X$ over a number field $k \subset \C$.
   
   \begin{rem} The image of $\pi_1^{\mathrm{top}}(X,p)$ inside $\pi_1^{dR}(X,p)(\Pe^{\mm}_C)$, which generalises the lattice of periods, is interesting. Already in the case when $X = \Pro^1\backslash \{0,1,\infty\}$, and $p$ is a tangent vector of length $1$ at the origin, the coefficients of this map are generated by the ring of  motivic multiple zeta values and the motivic version of $2 \pi i$. 
      \end{rem} 
   
   A simplification of $(\ref{AJnonab})$ is possible if we replace $X$ with a  simply connected open subset.   In that case,  the   path $\gamma_x$ is unique up to homotopy and
we do not need to quotient by the non-abelian periods, giving  a map
$X(\Q) \rightarrow  \underline{\pi}_1^{dR}(X,p) (\Pe^{\mm}_C)$.
   
\subsection{Changing cohomology theories} The classical Abel-Jacobi map is  obtained from  the comparison isomorphism from  de Rham to  Betti cohomology.  The method of Chabauty and Coleman via $p$-adic integrals is naturally expressed using the comparison from de Rham to crystalline cohomology. More generally, given fiber functors $\omega$ and $\omega'$ on the category $C$, we obtain a universal comparison, analogue of $(\ref{pi1univcomp})$
 \begin{equation}  \pi_1^{\omega'}(X,p,x) \times \Pe^{\omega,\omega'}_C \overset{\sim}{\To} \pi_1^{\omega}(X,p,x) \times \Pe^{\omega,\omega'}_C\ ,
\end{equation}
where $\Pe^{\omega, \omega'}_C$ is the  ring of $(\omega, \omega')$-periods of $C$. 
We shall mainly consider the cases where $\omega$ is the de Rham fiber functor, and $\omega'$ is de Rham, Betti or the crystalline fiber functor. 
In the latter  setting, the topological path  $\gamma_x$ can be replaced the unique Frobenius-invariant path   \cite{Besser}  between the reductions of $p$ and $x$.  For the majority of this paper,  we take 
$(\omega, \omega')$ to be $(dR, dR)$  and focus on the mixed Tate case, for which $\underline{\pi}_1^{dR}= \pi_1^{dR}$.
Other variants are possible:  Kim's  approach uses  $\ell$-adic  and crystalline cohomology \cite{KimAlb}.

 \section{Background on periods}
 
 \subsection{$C$-periods}  \label{sectCperiods}
Suppose that 
$C$ is a Tannakian category over $\Q$ such that
\begin{enumerate}
\item $C$ is equipped with a  pair of fiber functors $\omega_{dR}, \omega_B : C \rightarrow \mathrm{Vec}_{\Q}$ to the category of finite dimensional vector spaces over $\Q$.
\item  there is an isomorphism, functorial in $M$, and compatible with $\otimes$  
$$\comp_{B,dR}: \omega_{dR}(M) \otimes \C \overset{\sim}{\To} \omega_B(M) \otimes \C\ .$$
\end{enumerate}
 Define the ring of periods of $C$ to be  the affine ring
 $$\Pe^{\mm}_C = \Or(\Isom_{C}^{\otimes}(\omega_{dR}, \omega_B))\ .$$
It is a $\Q$-algebra, generated by matrix coefficients $[M, \gamma, \omega]^{\mm}$, where $\gamma \in \omega_B(M)^{\vee}$ and $\omega \in \omega_{dR}(M)$.  It  is the function which to $\phi \in \mathrm{Isom}^{\otimes}_C (\omega_{dR}, \omega_B)$ assigns $\gamma(\phi(\omega))$.  The ring $\Pe^{\mm}_{C}$ can be described in terms of generators and relations \cite{NotesMot}, \S2.

The  period homomorphism 
 $$\mathrm{per} : \Pe^{\mm}_C \To \C$$
 is defined by evaluation on the point $\comp_{B,dR} \in \Isom_{C}^{\otimes}(\omega_{dR}, \omega_B)(\C)$ defined by $(2)$. 
 For every object $M$ in $C$ there is a universal comparison map
 \begin{eqnarray}  \omega_{dR}(M) \otimes_{\Q} \Pe^{\mm}_C & \overset{\sim}{\To} & \omega_B(M) \otimes_{\Q} \Pe^{\mm}_C \\
 \omega \otimes 1 & \mapsto &  \sum e_i \otimes [M, e_i^{\vee}, \omega]^{\mm} \nonumber
 \end{eqnarray}  
where $\{e_i\}$ is a basis of $\omega_B(M)$, and $\{e_i^{\vee}\}$ the dual basis. This formula does not depend on the choice of basis. 
Composing the previous map with the period homomorphism, and tensoring with $\C$,  gives back the isomorphism $(2)$. 
See \cite{NotesMot}, \S2 for further details.

 \subsection{Examples} \label{sectHcat}
 As a first example, let $C$  be the category $\mathcal{H}$ (\cite{DeP1}, \S1.10) of triples $(V_{B}, V_{dR}, c)$ where 
 $V_{dR}, V_B$ are finite-dimensional $\Q$-vector spaces equipped with an increasing filtration $W$, and $c$ is an isomorphism 
 $c: V_{dR} \otimes \C \overset{\sim}{\rightarrow} V_B\otimes \C$ which respects $W$. The vector space $V_{dR}$ is furthermore equipped with a decreasing filtration $F$, such that $V_B$, equipped with $cF$ and $W$, is a graded-polarizable $\Q$-mixed Hodge structure. It can also be equipped with a functorial real Frobenius involution 
 $F_{\infty}: V_B \cong V_B$ but which will play a limited role here.  
 
 In this setting, we can define $\Pe^{\mm,+}_{\mathcal{H}}$ to be the subring of $\Pe^{\mm}_{\mathcal{H}}$ generated by matrix coefficients $[M, \gamma, \omega]^{\mm}$
 which are effective: i.e., $W_{-1}M=0$ \cite{NotesMot} \S3.2.

 For any smooth  scheme $X$ and a normal crossings divisor $D\subset X$ over $\Q$, Deligne's mixed Hodge theory implies that the triple $$H^n(X,D):=(H^n(X(\C),D(\C)), H^n_{dR}(X,D), \mathrm{comp}_{B, dR})$$
is  an object of $\mathcal{H}$, and depends  functorially in $X$.  In particular, 
we can define the `motivic' integral to be the  (effective) matrix coefficient $$ I^{\mm}_{\gamma}(\omega) = [ H^n(X,D), \gamma, \omega]^{\mm} \quad \in \quad \Pe^{\mm,+}_{\mathcal{H}}$$ 
whenever $\omega \in H^n_{dR}(X,D)$ and $\gamma$ is a smooth chain on $X(\C)$ whose boundary is contained in $D(\C)$.

Alternatively, one can take $C$ to be a variant of Nori's elementary category of motives. Such a formalism enables us to make sense of \S\ref{sectintrogen}, but  since we presently lack  control on the size of the space of motivic periods in this category,
 there are no unconditional applications to integral points. 
 
 Therefore, for the majority of this paper, we shall take  $C$ to be a category of mixed Tate motives over a number field (\S\ref{sectMT}).  In this situation, the space of motivic periods is under control, but this  condition poses  severe restrictions on  the scheme $X$.  
 
 \subsection{Variants}
The category $\mathcal{H}$  can be enriched to include  $\ell$-adic and crystalline components \cite{DeP1} \S1.10. 
More generally, given any two fiber functors $\omega_i $ from $C$ to the category of projective $R_i$-modules for $i=1,2$, we can form a ring of $\omega_1 , \omega_2$ periods 
$$\Pe^{\omega_1,\omega_2}_C = \Or ( \mathrm{Isom}^{\otimes}_C(\omega_2,\omega_1))\ .$$
Since $\mathrm{Isom}^{\otimes}_C(\omega_2,\omega_1)$, the scheme of tensor isomorphisms from $\omega_2$ to $\omega_1$, is a $G^{\omega_1}_C \times G^{\omega_2}_C$ bitorsor, where $G^{\omega_i}_C = \mathrm{Aut}_{C}^{\otimes}(\omega_i)$, it follows that   $G^{\omega_2}_C$ acts on $\Pe^{\omega_1,\omega_2}_C$ on the left, $G^{\omega_1}_C$ acts on the right.  Later in the paper, we wish to have Tannaka groups acting on the left of fundamental groups, and hence on the right of their affine rings. Since this is the opposite of the usual convention, where groups act on their representations on the left, this   will have the effect of reversing `left' and `right' throughout this section. 
The ring $\Pe^{\omega_1,\omega_2}_C$  is generated by matrix coefficients $[M, v_1, v_2]^{\omega_1,\omega_2}$ where $v_1 \in \omega_1(M)^{\vee}$, $v_2 \in \omega_2(M)$.  Of particular relevance for the applications discussed here  are the cases $(\omega_1, \omega_2) = (dR, dR)$, and $(\omega_1, \omega_2) = (crys,dR)$, and $(\omega_1,\omega_2) = (B,dR) = : \mm$.

 \subsection{Mixed Tate motives over number fields \cite{Levine, DeGo}} \label{sectMT}
 Let   $k$ be a  number field, and $S$ a finite set of primes in $\Or_k$.   Let $\MT_k$ denote the category of mixed Tate motives  over $k$, and $\MT_S\subset \MT_k$ the full subcategory of motives unramified at $S$.  It is a neutral Tannakian category over $\Q$ with 
 a canonical fiber functor 
 $$\omega : \MT_k \To \mathrm{Vec}_{\Q}$$
 which factors through the category of graded vector spaces. The de Rham fiber functor $\omega_{dR}: \MT_k \rightarrow \mathrm{Vec}_k$ is obtained from $\omega$ by tensoring with $k$: $\omega_{dR} = \omega\otimes k$.    Let 
 $$G_S^{\omega} = \mathrm{Aut}^{\otimes}_{\MT_S}(\omega) $$
 be the  Tannaka group relative to $\omega$. It is an affine group scheme over $\Q$, and if we denote by $U^{\omega}_S$ its pro-unipotent radical, we have a canonical splitting 
 \begin{equation} \label{Gssplit} G_S^{\omega} = U_S^{\omega} \rtimes \mathbb{G}_m\ .
 \end{equation}
 For every embedding $\sigma: k \hookrightarrow \C$ there is a Betti fiber functor $\omega_{\sigma} : \MT_k \rightarrow \mathrm{Vec}_{\Q}$, and hence we have a $\Q$-algebra of motivic periods $\Pe^{\mm_\sigma}_{\MT_S}= \Or( \mathrm{Isom}^{\otimes}_{\MT_S}(\omega, \omega_{\sigma}))$. It has a left action of $G^{\omega}_S$ and a period homomorphism $\per:  \Pe^{\mm_\sigma}_{\MT_S}\rightarrow \C$.  It is generated by matrix coefficients $[M, \gamma, v]$ where $v\in \omega(M)$, and $\gamma \in \omega_{\sigma}(M)^{\vee}$.

\subsubsection{Unipotent periods} The ring of $\omega$-periods  $\Pe^{\omega}_S$ is defined to 
be the affine ring $\Or(G^{\omega}_S)$, and is generated by matrix coefficients $[M, v, f]^{\omega}$ where $v\in \omega(M)$ and $f \in \omega(M)^{\vee}$. Finally, the ring of unipotent periods is  
 \begin{equation} \label{PeUdefn} \Pe^{\uu}_S = \Or(U^{\omega}_S)
 \end{equation}
 and is generated by the restrictions of the matrix coefficients $[M, v,f]^{\omega}$  to $U^{\omega}_S$.  It is equipped with the conjugation 
 action of $G^{\omega}_S$, and in particular, is graded via the action of $\G_m$. The object
 $[M,v,f]^{\omega}$ is the function which   to $u \in U^{\omega}_S$
   assigns $f(uv) \in \A^1$.  
 
 \subsubsection{Structure} \label{sectStructure}  Let $\Or_S$ denote the ring of $S$-integers in $k$.
 It is known  \cite{DeGo, Levine} that 
 $$\mathrm{Ext}^1_{\MT_S} (\Q(0), \Q(-n))  \cong K_{2n-1}(\Or_S) \otimes_{\Z} \Q$$
 and all higher Ext groups  in $\MT_S$ vanish. 
 Furthermore, Borel proved that
 \begin{equation}\label{Borelrank}
 \dim_{\Q}  K_{2n-1}(\Or_S) \otimes_{\Z} \Q = \begin{cases} |S| \quad\qquad \hbox{ if } n= 1 \ , \\
 r_2 \quad\qquad \hbox{ if } n >1 \hbox{ even} \\  r_1+r_2 \quad \hbox{ if } n >1 \hbox{ odd} \\ \end{cases}
 \end{equation}
 where $r_1$ (resp. $r_2$) is the number of real  (resp. complex) places  of $k$.  It follows that  the graded Lie algebra of $\UdrS$ is free with $|S|$ generators in degree $1$, $r_2$ generators in even degrees, and $r_1+r_2$ generators in odd degrees. 
 The affine ring of $\UdrS$ is the graded dual to the universal envelopping algebra of $\Lie^{\!\mathrm{gr}} U^{\omega}_{S}$. This gives precise control on the  dimensions of the weight-graded pieces of  $\Pe^{\uu}_S$, and is  the key
 input for theorem \ref{introthm}.  
 
   \subsubsection{$p$-adic period} We shall assume that the category of mixed Tate motives over $\Or_S$  is equipped with a  crystalline fiber functor 
 for all primes $p$ of  $\Or_S$, and a canonical comparison isomorphism $\mathrm{comp}_{dR, crys}: M_{dR}\otimes k_p \overset{\sim}{\rightarrow} M_{crys} $.
 This was announced in \cite{Yamashita}. 
  Pulling back the Frobenius via this map
  defines a functorial isomorphism 
 $$F_p : M_{dR} \otimes k_p \overset{\sim}{\rightarrow} M_{dR} \otimes k_p $$
 and hence a canonical element $F_p \in \GdrS(k_p)$. It acts on the de Rham realisation of $\Q(-1)$ by multiplication by $p$, so  its  image in $\G_m(k_p)$  is the element $p$.  Denote its image in $\GdrS(k_p)$ by $p$ also. Hence one obtains an element 
 $$\overline{F}_p = p^{-1}   F_p   \quad  \in \quad \UdrS(k_p)\ .$$
 One can also consider $F_p p^{-1}  = p \overline{F}_p p^{-1} \in \UdrS(k_p)$. 
  The (single-valued) $p$-adic period is  the homomorphism  
 $$ \per_p : \Pe^{\uu}_S \To k_p $$
defined by  evaluation at the point  $\overline{F}_p$. 
 There is a Betti analogue:
 $\per_{\sigma}: \Pe^{\uu}_S \rightarrow  k_{\sigma}$
 where $\sigma : k \rightarrow \R$ is a real place of $k$. It
 is the `single-valued period', corresponding to  the  isomorphisms
$M_{dR} \otimes \C  \overset{\sim}{\rightarrow} M_{B, \sigma} \otimes \C{\rightarrow} M_{B, \sigma} \otimes \C \overset{\sim}{\leftarrow} M_{dR} \otimes \C $
 where the map in the middle  is the real Frobenius $(-1) F_{\infty}\otimes \id $ at $\sigma$, where $-1 \in \G_m(\Q)$.

 \section{Background on unipotent fundamental groups}  \label{sectunip} Let $X$ be a smooth, geometrically connected scheme over a number field $k\subset \C$. 
 
 \subsubsection{Betti fundamental groupoid} Let $x \in X(\C)$, and let $\pi_1^{un}(X(\C),y,x)$ denote the unipotent completion of the topological fundamental groupoid $\pi_1(X(\C),y,x)$. It is a scheme over $\Q$, equipped with a natural map from homotopy classes
 of paths from $x$ to $y$ into its set of rational points:
 $$\pi_1(X(\C), y,x) \To \pi_1^{un}(X(\C),y,x)(\Q)\ .$$
 It forms a groupoid which is compatible with the previous map:
 $$\pi_1^{un}(X(\C), z, y) \times \pi_1^{un}(X(\C),  y,x)   \rightarrow \pi_1^{un}(X(\C), z, x)\ .$$
 We shall compose paths in the functional order: $\alpha. \beta$ means $\beta$ followed by $\alpha$. 

 The unipotent fundamental group admits the following Tannakian formulation. Consider the category $\mathcal{L}(X)$
 of local systems of finite-dimensional $\Q$-vector spaces on $X(\C)$, which are equipped with an exhaustive increasing filtration by local sub systems of $\Q$-vector spaces, such that the associated graded pieces are trivial (constant). This is a Tannakian category, and for every point $x\in X(\C)$ there is a fiber functor $\omega_{x} : \mathcal{L}(X) \rightarrow \mathrm{Vec}_{\Q}$.  The unipotent completions can be retrieved as  follows:
 $$\pi_1^{un}(X(\C), x) = \mathrm{Aut}^{\otimes}_{\mathcal{L}(X)}(\omega_x) \qquad \hbox{ and } \qquad 
 \pi_1^{un}(X(\C), y,x) = \mathrm{Isom}^{\otimes}_{\mathcal{L}(X)}(\omega_y,\omega_x) \ .$$
 One can  replace $x$ or $y$ with tangential base points, and the  analogous statements remain true. The case where $X$ is a curve is treated in \cite{DeP1} \S15.

 \subsubsection{de Rham fundamental groupoid}
Let $U(X)$ be the Tannakian category of  unipotent algebraic vector bundles on $X$ defined over $k$. These are equipped with a flat connection with regular singularities at infinity, and an exhaustive increasing  filtration by flat $k$ sub-bundles such that the successive quotients are trivial. For any rational point $x \in X(k)$, the functor  $x^* : U(X) \rightarrow U(\mathrm{Spec}\, k)$ is a fiber functor $\omega_X: U(X) \rightarrow \mathrm{Vec}_{k}$. 

The fibers of the de Rham fundamental groupoid are the schemes over $k$
$$\pi_1^{dR}(X, x) = \mathrm{Aut}^{\otimes}_{U(X)}(\omega_x) \qquad \hbox{ and } \qquad 
 \pi_1^{dR}(X, y,x) = \mathrm{Isom}^{\otimes}_{U(X)}(\omega_y,\omega_x) \ .$$
They form a groupoid,  and in particular a torsor
\begin{equation}  
\pi_1^{dR}(X,y,x) \times \pi_1^{dR}(X,x) \To \pi_1^{dR}(X,y,x)  \label{dRTorsor}
\end{equation} 
As before, one can  replace $x$ or $y$ with  rational tangential base points.

 The   complexification of the affine ring of the de Rham fundamental group can be written down using the Chen's reduced bar construction \cite{Chen}.  If $A(X)$ denotes the de Rham complex of smooth differential forms on $X(\C)$, then 
 $$\Or(\pi_1^{dR}(X,y,x))\otimes_k \C \cong H^0 (\overline{B}(A(X))$$
 where $\overline{B}$ is the reduced bar construction. A version of this construction is available over $k$ in the case when $X$ is the complement of a normal crossing divisor in a smooth proper variety, all defined over $k$ (\cite{HainAlg}, \S14).  
 For example, if $X$ is an affine curve, then we can choose global regular $1$ forms $\omega_1,\ldots, \omega_m$ which represent the cohomology classes $H^1_{dR}(X;k)$. 
 In this case the affine ring is isomorphic to the tensor coalgebra
 \begin{equation}  \label{pi1asTc} \Or(\pi_1^{dR}(X,y,x)) \cong T^c \big( \bigoplus_{i=1}^m \omega_i k \big)\end{equation} 
 equipped with the shuffle product. The groupoid structure is dual to the deconcatenation operation on tensors of one-forms.
   
 \subsubsection{Comparison} The Riemann-Hilbert correspondence restricts to an equivalence of categories 
 $\mathcal{L}(X) \otimes_{\Q} \C \leftrightarrow \mathcal{U}(X)\otimes_k \C$
 and hence an isomorphism of schemes
 $$\mathrm{comp}_{B,dR}: \pi_1^{un}(X, y, x)\times \C \overset{\sim}{\To} \pi_1^{dR}(X, y,x)\times \C\ ,$$
 for all $x,y \in X(k)$.   It can be computed via the map
 \begin{eqnarray} \pi_1^{\mathrm{top}}(X,y,x) & \To & \mathrm{Hom} ( H^0( \overline{B}(A(X)), \C) \nonumber \\
 \gamma & \mapsto & \big( \omega \mapsto  \int_{\gamma} \omega\big) \nonumber
 \end{eqnarray} 
 where the second map is the iterated integral of $\omega$ along the path $\gamma$. Chen's $\pi_1$-de Rham  theorem  \cite{Chen} is equivalent to the statement
 that the comparison map is an isomorphism.

 \subsubsection{Beilinson's construction}
 When $X$ is the complement of a normal crossing divisor in a smooth projective variety, the unipotent fundamental group carries a natural mixed Hodge structure due to Morgan (via minimal models) and Hain (via the bar construction).
 A different approach is due to  Beilinson, who proved that the  unipotent fundamental groupoid is the Betti homology of a certain cosimplicial scheme constructed out of $X,x,y$, \cite{Wojtkowiak}. See \cite{DeGo}, \S3.3, for details.  
 
 In the case $x\neq y$,  consider $X^n = X\times \ldots \times X$, the product of $n$ copies of $X$, and let 
 $$D_0= \{x\} \times X^{n-1}\ , D_i = X^{i-1} \times \Delta \times X^{n-i-1} \hbox{ for } 1\leq i \leq n-1 \ , \ D_n = X^{n-1} \times \{y\}$$
where $\Delta \subset X\times X$ is the diagonal. Then Beilinson's theorem states  that 
 \begin{equation}
 \Or(\pi^{un}_1(X,y,x)) \cong \lim_{n\rightarrow \infty} \big( H^n (X^n(\C),   \cup_{i=0}^n D(\C)) \big)\ ,
 \end{equation}
 where the morphisms between the  relative cohomology  groups on the right are via face maps.
 The multiplicative and Hopf algebroid (dual to the groupoid) structures are geometric: they arise from  morphisms of algebraic varieties and hence 
 induce morphisms of the corresponding relative cohomology groups  on the right hand side (although not all these facts seem to have been proved explicitly in the literature).
  By replacing singular cohomology with another appropriate cohomology theory $\omega$, one obtains the corresponding
 notion of fundamental group:
 \begin{equation}  \label{pi1asgeometric} \pi^{\omega}_1(X,y,x):= \mathrm{Spec} \,  \Big( \lim_{n\rightarrow \infty} \big( H_{\omega}^n (X^n,   \cup_{i=0}^n D) \big)\Big)\ .
\end{equation} 
In particular,   the affine ring of the de Rham  fundamental groupoid is isomorphic to its de Rham  cohomology \cite{Wojtkowiak}, which is  automatically endowed with a $k$-mixed Hodge structure. Therefore,   the triple
$$ \Or(\pi^{un}_1(X,y,x)): = \big( \Or(\pi_1^B(X,y,x)), \Or(\pi_1^{dR}(X,y,x), \comp_{B, dR} \big)$$
is an Ind-object of  the category of realisations $\mathcal{H}$  (in the case $k=\Q$, and $x,y\in X(\Q)$ since, for simplicity, we only defined $\mathcal{H}$ over $\Q$).  The construction requires some modifications when $x, y$ are tangential or coincide.

 \subsubsection{Torsors}
 We shall write $(\ref{pi1asgeometric})$ as ${}_y \Pi^{\omega}_x$. 
 Composition of paths defines morphisms
 \begin{equation} \label{Picomppaths}
 {}_z \Pi^{\omega}_y \times {}_y \Pi^{\omega}_x \To {}_z \Pi^{\omega}_x 
 \end{equation} 
 with respect to which ${}_y \Pi^{\omega}_x$ is a  ${}_x \Pi^{\omega}_x$-torsor.   Dually, this is given by  a homomorphism
\begin{equation} \label{Picompdual} \Delta:    \Or(  {}_z \Pi^{\omega}_x ) \To \Or( {}_y \Pi^{\omega}_x) \otimes_k \Or({}_z \Pi^{\omega}_y)
\end{equation} 
 which is induced by a morphism of algebraic varieties. It has the property that  $ (\id \otimes m) (\Delta \otimes \id):     \Or(  {}_z \Pi^{\omega}_x )  \otimes_k \Or(  {}_z \Pi^{\omega}_y ) \rightarrow \Or( {}_y \Pi^{\omega}_x) \otimes_k \Or({}_z \Pi^{\omega}_y)$ is an isomorphism, where $m$ denotes multiplication.  
Furthermore, a torsor over a pro-unipotent affine group scheme has a rational point. This follows from the fact the Galois  cohomology of the additive group $H^1(\mathrm{Gal}(\overline{k}/k), \G_a)$ is trivial.

 \subsubsection{Hodge and weight filtrations} 
It follows from $(\ref{pi1asgeometric})$ that $\Or({}_y \Pi^{\omega}_x)$ is equipped with an increasing weight filtration $W$
which satisfies $W_{-1}=0$ and  
$$W_{0} \Or({}_y \Pi^{\omega}_x) \cong k$$
 when $X$ is connected. The filtration $W$  is respected by $(\ref{Picompdual})$. 
  In the  de Rham realisation, 
   $\Or({}_y \Pi^{dR}_x)$ is equipped, as a consequence of   $(\ref{pi1asgeometric})$, with a natural decreasing Hodge filtration $F$, which satisfies $F^0 \Or({}_y \Pi^{dR}_x) = \Or({}_y \Pi^{dR}_x)$. It is respected by $(\ref{Picompdual})$.
   Since $\Or({}_y \Pi^{dR}_x)$ is an ind-object in the category $\mathcal{H}$,  and  morphisms in the category of mixed Hodge structures are strict with respect to $W$ and $F$, we deduce that $(\ref{Picompdual})$ is strict with respect to $W$,  and also $F$ in the case $\omega =dR$. 
These filtrations are compatible with the multiplicative structure: $W_n W_m \subset W_{n+m}$, and $F^n F^m \subset F^{n+m}$. 

\subsubsection{Other realisations}   \label{sectOtherReal} The advantage of Beilinson's construction
is that the $\ell$-adic and crystalline realisations come for free.  In fact, his  construction, being geometric,  defines 
 a groupoid  object in Nori's category of mixed motives.
In \S\ref{sectfinal} we shall  briefly mention the crystalline fundamental group  \cite{BesserTannaka}, \cite{Vologodsky}.
Under the assumptions \cite{KimAlb} that $X$ has a good model over the ring of $S$-integers in a number field, one can define  the crystalline fundamental group as a Tannaka group of  the category \cite{Fiso} of unipotent (and hence overconvergent) isocrystals. 

 \subsubsection{Motivic fundamental groupoid: mixed Tate case}
For the applications to integral points, we  require a fundamental groupoid in an actual abelian category of mixed Tate (or Artin-Tate) motives, which is highly restrictive.  For example, 
 $X$ can be  a smooth unirational variety over a number field $k$.  Our main example is  $X= \Pro^1 \backslash \{0,1,\infty\}$ over $\Q$.  In a certain class of cases, and for $x,y \in X(k)$, Deligne and Goncharov have shown that  Beilinson's construction is the realisation of the motivic fundamental groupoid 
  $$\pi^{\mathrm{mot}}_1(X, y,x)$$
 which is a pro-object of $\MT_k$ (its affine ring is an Ind-object in $\MT_k$). Call $(X,x,y)$  Tate and unramifed outside a finite set of primes $S$ if $\pi^{\mathrm{mot}}_1(X, y,x)$     lies in the subcategory $\MT_S \subset \MT_k$ for some finite set of primes $S$. The set of primes $S$ can be determined from a regular model of $X$ (\cite{DeGo}, proposition 4.17). In particular, if  $X = \Pro^1\backslash \{0,1,\infty\}$, and $x,y \in X_S$ are $S$-units, then $(X,x,y)$ is unramified outside $S$.

There are at least two peculiarities in the mixed Tate case that we can exploit. The first is the existence of the canonical fiber functor, which is more precise
than the de Rham fiber functor since it is over $\Q$. For ease of notation,  we shall write
\begin{equation} 
{}_y \Pi_x^{\omega} = \omega( \pi_1^{\mathrm{mot}}(X,y,x)) \ .
\end{equation}
Its affine ring is defined over $\Q$. The other peculiarity is that  $\Or({}_y \Pi^{\omega}_x)$ is  graded by the weight, and hence equipped with an augmentation 
$\varepsilon:  \Or({}_y \Pi^{\omega}_x) \rightarrow  \gr^W_0 \Or({}_y \Pi^{\omega}_x) \cong   \Q$. Equivalently, there is  a canonical $\omega$-path  from $x$ to $y$:
\begin{equation} \label{TateCanpath} {}_y 1_x \in {}_y \Pi_x^{\omega} \ .
\end{equation} 
We shall define a generalisation of the canonical de Rham path to the non-mixed Tate case in \S\ref{sectCanpath} using the Hodge filtration.

 \section{Some linear algebra} \label{sectLinAlg}
 The crux of our argument is a simple observation  in linear algebra.  All tensors will be over a field $k$. 
 
\subsection{Coaction of linear maps} \label{sect: co} 
Let $V,W$ be  finite-dimensional $k$-vector spaces. 
The action of   $k$-linear maps  from $V$ to $W$ defines   a canonical map:
$$ \phi \otimes v \mapsto \phi(v)\quad :  \quad  \mathrm{Hom}(V,W) \otimes V  \To    W\ .$$
Dually, this defines a   coaction:
\begin{equation}  \label{cofirst}
\co :  V \To W \otimes \mathrm{Hom}(V,W)^{\vee} \ .
\end{equation} 
Identifying $\mathrm{Hom}(V,W) = V^{\vee} \otimes W$, and $\mathrm{Hom}(V,W)^{\vee} = W^{\vee} \otimes V$, the map $\co$ can equivalently be defined as the composition 
$$ V \To k \otimes V \To  W\otimes W^{\vee}  \otimes V$$
 where the first map is $v \mapsto 1\otimes v $, and the  map $k \rightarrow W\otimes W^{\vee}$ is the map which sends $1$ to the identity map  $\id \in W \otimes W^{\vee} = \mathrm{Hom}(W^{\vee},W^{\vee})$.  Thus, if we pick a basis $\{e_i\}$ of $W$, and let $\{e_i^{\vee}\}$ denote the dual basis of $W$, then 
 $$\co (v )  =  \sum_i e_i \otimes [e_i^{\vee}, v]$$
 where $[w, v]$, for $w\in W^{\vee}, v\in V$  is the function on $\mathrm{Hom}(V,W)$ given by  $\phi \mapsto w(\phi(v))$. 

Any linear map $\phi: V \rightarrow W$  factorizes through the canonical map $(\ref{cofirst})$: 
$$ \phi : V \overset{\co}{\To}   W \otimes \mathrm{Hom}(V,W)^{\vee} \overset{\,\, \id \otimes \ev_{\phi}  \,\, }{\To} W\ ,$$
where the second map $\ev_{\phi}$ is `evaluation of $\phi$'.   This follows from 
$$ (\id \otimes \ev_{\phi}) \co(v) =  \sum_i e_i \otimes e_i^{\vee}(\phi(v)) = \phi(v)\ .$$
We shall use this construction in two different ways; firstly in the definition of the universal comparison isomorphism, and secondly by applying it to a ring of periods.

\subsection{Coaction by  a scheme $A$ of linear maps} \label{sectcoactSchemeA} A  vector space $U$  over $k$ of finite dimension can be viewed as an affine scheme, i.e., the functor from commutative unitary $k$-algebras $R$ to sets    $R \mapsto U\otimes R$ is representable. It is represented by the symmetric tensor algebra $\Or(U):=\bigoplus_{n\geq 0 } \mathrm{Sym}^n \, U^{\vee}$, since 
$$\mathrm{Hom}_{k\mathrm{-alg}} ( \bigoplus_{n\geq 0 } \mathrm{Sym}^n \, U^{\vee}, R) = \mathrm{Hom}(U^{\vee}, R) \cong U\otimes R\ .$$  
Furthermore, linear maps between vector spaces give rise to morphisms between the corresponding schemes. 
Apply this to $U=\mathrm{Hom}(V,W)$. Let us  suppose that  
\begin{equation} A \subseteq \mathrm{Hom}(V,W)\end{equation} is a closed subscheme. 
There is a natural linear map $U^{\vee} \hookrightarrow \Or(U) \rightarrow \Or(A)$. 

\begin{defn}  Denote the composition of $(\ref{cofirst})$ with the previous map by
\begin{equation} \label{secondco} \co:   V \To W\otimes \Or(A) \ .\end{equation}
\end{defn} 
Any element   $\phi \in A(R) \subset \mathrm{Hom}(V,W)\otimes R$
factorizes through  $(\ref{secondco})$:
$$\phi = (\id \otimes \ev_{\phi}) \co\ ,$$
where $\ev_{\phi}: \Or(A) \rightarrow R$ is the homomorphism which  represents $\phi$.  From now on, we shall drop the commutative ring $R$ from the notation, and simply write $\phi \in A$, where the dependence on $R$ is understood.

\subsection{Infinite-dimensional vector spaces} The previous construction only uses the finite-dimensionality of $W$, but extends  to the case when  $V$ is infinite-dimensional.  Writing $V= \varinjlim V_i$ as a limit of finite-dimensional vector spaces $V_i$, we have 
$$\mathrm{Hom}(V, W) = \varprojlim \mathrm{Hom}(V_i, W)\ , $$ 
which is a projective limit of schemes and hence a scheme.

On the other hand, we can use the construction with infinite-dimensional  graded vector spaces $V= \bigoplus_n V_n$ whose
graded quotients $V_n$ are finite-dimensional $k$-vector spaces, if  one interprets $\vee$ to be   the graded dual, i.e., $V^{\vee} := \bigoplus_n V^{\vee}_n$.

\subsection{Determinant equation} \label{sect: det} 
Now  suppose in addition that  $W$ is  a commutative ring.
By extending scalars from $k$ to $W$ the map    $(\ref{secondco})$ defines    a $W$-linear map   
$$    \co :      W\otimes V  \To  W\otimes \Or(A) $$
given by $w\otimes v  \mapsto w \co(v) $. For any $n\geq 0$, we deduce  a $W$-linear map 
$$\textstyle{   \bigwedge^n \co :  W\otimes  \bigwedge^n V   \To  W\otimes   \bigwedge^n 
  \Or(A)  }    \ . $$
Suppose that    $N=\dim_k \Or(A) +1$ is finite.  Then $\bigwedge^N \Or(A)$ vanishes and so  $\bigwedge^N \co$ is the zero map.  
  Thus given $\phi_1,\ldots, \phi_N$ in $A$ the map
$$\phi_1 \wedge \ldots \wedge \phi_N  :   \textstyle{ \bigwedge^N V}  \To W
$$
vanishes  since it factors through $\bigwedge^N \co$.

\begin{lem} \label{lemdeteqn} If $N=\dim_k \Or(A) +1 $ is finite,  then given any $\phi_1,\ldots, \phi_N \in A$, 
\begin{equation} \label{detequation}  \det  \big( \phi_i(v_j) \big)_{1\leq i,j\leq N} =0 
\end{equation}
for all   $v_1,\ldots, v_N \in V$.
\end{lem}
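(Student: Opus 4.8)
The plan is to deduce the determinant identity directly from the vanishing of the top exterior power of $\co$, exactly as foreshadowed in the paragraph preceding the statement. First I would set $N = \dim_k \Or(A) + 1$, which is assumed finite, and recall from \S\ref{sect: det} the $W$-linear map $\co : W \otimes V \to W \otimes \Or(A)$ obtained by extension of scalars, together with its exterior powers $\bigwedge^n \co : W \otimes \bigwedge^n V \to W \otimes \bigwedge^n \Or(A)$. Since $\bigwedge^N \Or(A) = 0$ (the top exterior power of a $k$-vector space of dimension $N-1$), and exterior powers commute with the base change $k \to W$, we get $\bigwedge^N (W \otimes \Or(A)) = W \otimes \bigwedge^N \Or(A) = 0$, hence $\bigwedge^N \co = 0$.

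Next I would fix $\phi_1, \ldots, \phi_N \in A$ and form the $W$-linear map $\phi_1 \wedge \cdots \wedge \phi_N : \bigwedge^N V \to W$, well-defined because $W$ is a commutative ring so $\bigwedge^N W \cong W$ canonically. The key factorization is that this map equals the composite
\begin{equation*}
\textstyle\bigwedge^N V \xrightarrow{\;\bigwedge^N \co\;} W \otimes \bigwedge^N \Or(A) \xrightarrow{\;\id \otimes (\ev_{\phi_1} \wedge \cdots \wedge \ev_{\phi_N})\;} W,
\end{equation*}
which follows termwise from the identity $(\id \otimes \ev_{\phi}) \co = \phi$ established in \S\ref{sectcoactSchemeA}, applied in each tensor slot after expanding $\co(v_1) \wedge \cdots \wedge \co(v_N)$. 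Since the first arrow is zero, the composite is zero, so $\phi_1 \wedge \cdots \wedge \phi_N$ vanishes on all of $\bigwedge^N V$.

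Finally I would unwind what this says on a decomposable element $v_1 \wedge \cdots \wedge v_N$ for $v_1, \ldots, v_N \in V$: by definition of the wedge of linear maps, $(\phi_1 \wedge \cdots \wedge \phi_N)(v_1 \wedge \cdots \wedge v_N) = \det(\phi_i(v_j))_{1 \leq i,j \leq N} \in W$, and this is $0$ by the previous step. This is precisely \eqref{detequation}.

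The argument is essentially bookkeeping, so there is no deep obstacle; the one point requiring a little care is the compatibility of the various identifications — that exterior powers commute with the scalar extension $k \to W$, that $\phi_i \in A(R)$ really does factor through $\co$ after this extension (so that the factorization displayed above is legitimate over $W$ rather than merely over $k$), and that the "evaluation" maps $\ev_{\phi_i} : \Or(A) \to W$ are algebra homomorphisms whose exterior product makes sense. All of these are immediate from \S\ref{sectcoactSchemeA}, but writing the factorization cleanly enough to see the determinant pop out is the only step where one must be attentive to indices.
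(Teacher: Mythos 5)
Your proposal is correct and follows essentially the same route as the paper: the paper's argument in \S\ref{sect: det} is exactly the extension of scalars of $\co$ to a $W$-linear map, the vanishing of $\bigwedge^N \co$ because $\bigwedge^N \Or(A)=0$, and the factorization of $\phi_1\wedge\cdots\wedge\phi_N$ through it. Your additional care about base change of exterior powers and evaluating on decomposables is just a more explicit writing-out of the same bookkeeping.
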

\begin{rem}  \label{remuniveqn} An  
$\alpha \in \mathrm{ker} \big( W \otimes V    \overset{\co}{\rightarrow}    W \otimes \Or(A) \big) \subset W\otimes V$
 defines a universal equation 
\begin{equation}  \label{univeqn}
 (\id \otimes \phi  )(\alpha) = 0  \qquad \hbox{ for all } \phi \in A \ . 
 \end{equation} 
A non-zero $\alpha$  exists if  and only if $\dim_k V > \dim_k \Or(A)$. Note that $(\ref{detequation})$ is intrinsic and defined over $k$, but $(\ref{univeqn})$
depends on a choice of $\alpha$, and  involves coefficients in $W$. 
\end{rem}

\subsection{Filtered version}\label{sect: filt} Let $V,W$ be as in \S\ref{sect: co}.
 In our applications,  $V, W$ are equipped with increasing filtrations $F_n V, F_nW$ by subspaces.  Let $\mathrm{Hom}_F(V,W)$ denote the space  of linear maps respecting $F$, i.e., $\phi: V \rightarrow W$ such that $\phi(F_nV)\subset F_nW$ for all $n$. 
We obtain  an increasing filtration, also denoted by $F$, on 
$ \mathrm{Hom}_F(V, W)^{\vee}$ via
$$F_n \big(\mathrm{Hom}(V, W)^{\vee} \big):= \mathrm{Im} \big( \mathrm{Hom}_F (F_n V, F_nW)^{\vee} \rightarrow \mathrm{Hom}_F(V, W)^{\vee}\big) $$
where the second map is the dual of the restriction $\mathrm{Hom}_F(V,W) \rightarrow \mathrm{Hom}_F(F_n V, F_n W).$ 
Note that this is not the standard definition of a filtration on $\mathrm{Hom}$. 

The space $\mathrm{Hom}_F(F_nV,F_n W)$ acts on every $F_nV$ on the left
$$\mathrm{Hom}_F(F_nV,F_n W) \otimes F_nV \To F_nW$$
so dually,  $(\ref{cofirst})$ restricts to a linear map
$$\co : F_n V \To F_n W \otimes F_n \mathrm{Hom}_F(V, W)^{\vee}\ .$$
\begin{lem} The subspace $\mathrm{Hom}_F(V,W)$ is a closed subscheme of $\mathrm{Hom}(V,W)$.
\end{lem}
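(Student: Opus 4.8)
The statement to prove is that $\mathrm{Hom}_F(V,W)$ is a closed subscheme of $\mathrm{Hom}(V,W)$, where both $V$ and $W$ carry increasing filtrations $F_\bullet$ and $\mathrm{Hom}_F(V,W)$ denotes the linear maps that respect these filtrations. The plan is to exhibit $\mathrm{Hom}_F(V,W)$ as the common zero locus of an explicit finite collection of linear (hence in particular polynomial) equations on the affine space $\mathrm{Hom}(V,W)$, which immediately shows it is Zariski closed.

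First I would recall that, since $\mathrm{Hom}(V,W)$ is a finite-dimensional $k$-vector space, it is an affine scheme with coordinate ring $\Or(\mathrm{Hom}(V,W)) = \bigoplus_{n\geq 0}\mathrm{Sym}^n \mathrm{Hom}(V,W)^\vee$ as in \S\ref{sectcoactSchemeA}. The condition $\phi(F_nV)\subset F_nW$ is, for each fixed $n$, a \emph{linear} condition on $\phi$: it says that the composite $F_nV \hookrightarrow V \overset{\phi}{\to} W \twoheadrightarrow W/F_nW$ vanishes, i.e. that $\phi$ lies in the kernel of the linear map $\mathrm{Hom}(V,W)\to \mathrm{Hom}(F_nV, W/F_nW)$. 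Since $V$ is finite-dimensional the filtration $F_\bullet V$ stabilizes, so only finitely many values of $n$ give distinct conditions, and $\mathrm{Hom}_F(V,W)$ is the intersection of these finitely many linear subspaces of $\mathrm{Hom}(V,W)$. A linear subspace $L\subset \mathrm{Hom}(V,W)$ is cut out scheme-theoretically by the ideal generated by $L^\perp \subset \mathrm{Hom}(V,W)^\vee$ inside $\Or(\mathrm{Hom}(V,W))$; intersecting finitely many such subspaces corresponds to summing the ideals, which is again a finitely generated ideal. Hence $\mathrm{Hom}_F(V,W) = \Spec\big(\Or(\mathrm{Hom}(V,W))/I\big)$ for an explicit ideal $I$ generated in degree one, and it is a closed subscheme.

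There is essentially no obstacle here — the only point requiring a word of care is the role of the finite-dimensionality of $V$, which guarantees that the filtration is finite and so the defining ideal is finitely generated rather than merely the zero locus of an infinite family of equations; and, more subtly, one should check that the \emph{scheme} structure (not just the underlying point set) agrees with what is wanted, which it does because a linear subspace of an affine space is canonically a reduced closed subscheme and this is compatible with the intended filtered coaction $\co : F_nV \to F_nW\otimes F_n\mathrm{Hom}_F(V,W)^\vee$. In the infinite-dimensional variant of \S\ref{sect: co} one would instead write $V = \varinjlim V_i$ and express $\mathrm{Hom}_F(V,W)$ as a projective limit of the corresponding closed subschemes $\mathrm{Hom}_F(V_i,W)\subseteq \mathrm{Hom}(V_i,W)$, but for the stated lemma, where $V$ is finite-dimensional, the direct argument above suffices.
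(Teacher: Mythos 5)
Your proof is correct and is essentially the same as the paper's: both express $\mathrm{Hom}_F(V,W)$ as the kernel (inverse image of $0$) of the linear map $\mathrm{Hom}(V,W)\to\bigoplus_n \mathrm{Hom}(F_nV,W/F_nW)$, using finite-dimensionality to reduce to finitely many values of $n$. The extra remarks on the defining ideal and the reduced scheme structure are fine but not needed.
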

\begin{proof}
Since $V$ and $W$ are finite-dimensional, there exist integers $n_0, n_1$ such that 
$F_n  = F_{n_0}$ for all $n_0 \leq n$  and $F_n = F_{n_1}$ for all $n\geq n_1$. Here, $F_i=F_j$ means that both $F_i V= F_jV$ and $F_iW = F_jW$. 
Consider the natural map
$$\mathrm{Hom}(V,W) \To \bigoplus_{n_0 \leq n \leq n_1} \mathrm{Hom}(F_n V, W/F_nW)\ .$$
Then  $\mathrm{Hom}_F(V,W)$ is its  kernel, i.e., the inverse image of the point $0$.
\end{proof} 
Now suppose that $A$ is a closed  subscheme of $\mathrm{Hom}_F (V, W)$. In particular, the points of $A$ respect the filtration $F$. 
Then  we have
$$\co : F_n V \To  F_n W \otimes F_n \Or(A) \ ,$$
where $F_n \Or(A)$ is the increasing  filtration on $\Or(A)$ defined by the image of the subspace $F_n  \Or(\mathrm{Hom}_F(V,W))  =  \Or(\mathrm{Hom}_F(F_nV,F_nW))$ via $\Or(\mathrm{Hom}_F(V,W)) \rightarrow \Or(A)$. 

\begin{cor}  \label{corHomFscheme} Now suppose that $V,W$ are filtered vector spaces, possibly infinite dimensional, such  that 
$F_nW$ is finite-dimensional for all $n$. Then 
$\mathrm{Hom}_F(V,W)$
is representable, and therefore an affine scheme.
\end{cor}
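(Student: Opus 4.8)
The plan is to reduce the infinite-dimensional statement of Corollary \ref{corHomFscheme} to the finite-dimensional Lemma proved immediately above, by writing everything as a projective limit of representable functors. First I would observe that the filtration $F$ on $W$ exhausts $W$, so $W = \varinjlim_n F_n W$ with each $F_n W$ finite-dimensional. For $V$ there is no finiteness assumption, but this does not matter: a linear map $\phi \colon V \to W$ lies in $\mathrm{Hom}_F(V,W)$ iff $\phi(F_n V) \subseteq F_n W$ for all $n$, and since $V = \varinjlim_n F_n V$ (the filtration being exhausting), such a $\phi$ is the same data as a compatible family $\phi_n \colon F_n V \to F_n W$ of linear maps. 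In other words, I would establish the identification
$$\mathrm{Hom}_F(V,W) \;=\; \varprojlim_n \mathrm{Hom}_F(F_n V, F_n W)$$
as functors on commutative $k$-algebras, where the transition maps send $\phi_{n+1}$ to its restriction $\phi_{n+1}|_{F_n V}$, which indeed lands in $\mathrm{Hom}_F(F_n V, F_n W)$ because $\phi_{n+1}(F_n V) \subseteq F_n W$.

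Next I would check that each term in this inverse system is an affine scheme. Here $F_n V$ may be infinite-dimensional, but by hypothesis $F_n W$ is finite-dimensional, so $\mathrm{Hom}(F_n V, F_n W)$ is representable by the discussion in \S\ref{sect: co} combined with the infinite-dimensional remark of the excerpt (the construction only uses finite-dimensionality of the target): concretely $\mathrm{Hom}(F_n V, F_n W) \cong (F_n V)^\vee \otimes F_n W$, and this is an affine scheme (a possibly infinite product of copies of $\mathbb{A}^1$, i.e. $\Spec$ of a symmetric/polynomial algebra on $F_n V \otimes (F_n W)^\vee$). The subfunctor $\mathrm{Hom}_F(F_n V, F_n W)$ of maps respecting the induced filtrations on source and target is cut out by the vanishing of finitely many composed linear maps exactly as in the proof of the Lemma above — one picks the finite range $n_0 \le m \le n_1$ where the induced filtration on the finite-dimensional $F_n W$ stabilizes and takes the kernel of $\mathrm{Hom}(F_n V, F_n W) \to \bigoplus_{m} \mathrm{Hom}(F_m V, F_n W / F_m W)$ — hence it is a closed subscheme and therefore affine.

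Finally, a projective limit of affine schemes over $k$ is again an affine scheme (its coordinate ring is the filtered colimit $\varinjlim_n \Or(\mathrm{Hom}_F(F_n V, F_n W))$ of the coordinate rings along the maps dual to the restrictions), so $\mathrm{Hom}_F(V,W)$ is representable and affine. I expect the only genuinely delicate point to be the first step: verifying carefully that a compatible family of filtered maps $F_n V \to F_n W$ glues to a single $\phi \colon V \to W$ and that this $\phi$ automatically respects $F$ — this is where the exhaustiveness of the filtrations on $V$ and $W$ is used, and where one must be slightly careful that "respects $F$" is tested levelwise. Everything after that is formal: representability of $\mathrm{Hom}$ with finite-dimensional target, closedness of the filtration-preserving locus (which is just the finite-dimensional Lemma applied at each level), and stability of affineness under cofiltered limits.
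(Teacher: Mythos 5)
Your proposal is correct, and the underlying idea is the same as the paper's: exhibit $\mathrm{Hom}_F(V,W)$ as a cofiltered limit of representable functors. The decomposition differs slightly. The paper first writes $V=\varinjlim V_i$ with each $F_nV_i=V_i\cap F_nV$ finite-dimensional and then takes a double limit $\mathrm{Hom}_F(V,W)=\varprojlim_i\varprojlim_n \mathrm{Hom}_F(F_nV_i,F_nW)$, so that every term falls under the literally finite-dimensional lemma; you instead take a single limit $\varprojlim_n \mathrm{Hom}_F(F_nV,F_nW)$ keeping the (possibly infinite-dimensional) source whole, which is cleaner and avoids choosing a presentation of $V$, but obliges you to justify representability with infinite-dimensional source. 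You do this correctly: $\mathrm{Hom}(U,W')$ for $W'$ finite-dimensional is $\mathrm{Spec}\,\mathrm{Sym}(U\otimes (W')^{\vee})$, whose $R$-points are $\mathrm{Hom}_k(U,W'\otimes R)$, and your identification of the limit with compatible families of filtered maps is a functorial bijection since $V\otimes R=\varinjlim_n F_nV\otimes R$. One small caveat: the ``finite range where the filtration stabilizes'' trick from the lemma need not apply verbatim when $F_nV$ is infinite-dimensional (its induced filtration can have infinitely many distinct steps), but this costs nothing --- the filtration-preserving locus is the intersection of the kernels of the linear morphisms $\mathrm{Hom}(F_nV,F_nW)\to\mathrm{Hom}(F_mV,F_nW/F_mW)$ over all $m$, and an arbitrary intersection of closed subschemes is closed. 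With that adjustment your argument is complete.
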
 
\begin{proof}Let $V= \varinjlim V_i$ such that $F_n V_i= V_i \cap F_nV$ is finite-dimensional for all $i$. Then $\mathrm{Hom}_F(V,W) = \varprojlim \mathrm{Hom}_F(V_i,W)$ and 
$ \mathrm{Hom}_F(V_i,W)=\varprojlim  \mathrm{Hom}_F(F_nV_i,F_n W)$. Therefore $\mathrm{Hom}_F(V,W)$ is a projective limit of schemes. 
\end{proof}
\subsection{Graded version}\label{sect: grad} 
Now consider the case when $V =\bigoplus_n V_n, W=\bigoplus_n W_n$ are graded by integers. Equivalently, $V, W$ admit a left action by $\G_m$. Define 
$\mathrm{Hom}_{\G_m}(V,W)$ to be the subspace of $\mathrm{Hom}(V,W)$  of linear maps which respect the gradings,  
and let us write 
$\mathrm{Hom}_n (V,W) = \mathrm{Hom}(V_n, W_n)$. The reader is  warned that this is not the usual grading on $\mathrm{Hom}_{\G_m}(V,W)$. 
 The degree $n$ component of $\co$ is the map 
$$ \co_n : V_n \To W_n \otimes  \mathrm{Hom}_n (V, W)^{\vee }\ . $$
As in the previous paragraph, one  easily checks that $\mathrm{Hom}_{\G_m} (V, W)$ is a subscheme of $\mathrm{Hom}(V,W)$. 
 In the case when  $A$ is a closed subscheme of  $\mathrm{Hom}_{\G_m} (V, W)$, i.e., when points of $A$  respect the gradings,
then we have a coaction
$$\co : V_n \To  W_n \otimes  \Or(A)_n \ $$
where $\Or(A)_n$ is the induced grading  on $ \Or(A) $.

 \section{Non-abelian algebraic cocycles}
 
 We consider  the space of one-cocycles of an affine group scheme with coefficients in another affine group scheme. We give necessary conditions for this  to define a scheme. 
 \subsection{Reminders on cocycles} Let  $G, \Pi$ be groups such that $G$ acts on $\Pi$ on the left. This means that 
 $g(ab)= g(a) g(b)$ for all $g\in G$ and $a,b \in \Pi$. 
   Recall that the set of non-abelian cocycles $Z^1(G,\Pi)$ is the pointed set 
 consisting of maps  $c: G \rightarrow \Pi$ of sets satisfying the cocycle condition 
 $$c_{gh} = c_g \, g(c_h) \qquad \hbox{ for all } g,h \in G\ .$$
This implies that $c_1=1$. The trivial cocycle is defined by  $c_{g}=1$ for all $g\in G$. 
The set of cocycles is equivalently given by
\begin{equation} \label{Z1asHom} Z^1(G,\Pi) \cong \ker \big(  \mathrm{Hom}(G, G \rtimes \Pi) \To \mathrm{Hom}(G,G)\big)\ ,
\end{equation} 
since one verifies that given any $c: G\rightarrow \Pi$, the map  $g\mapsto (g, c_g) : G \rightarrow  G \rtimes \Pi $  is a homomorphism if and only if $c$ satisfies the cocycle condition above.

Two cocycles $c,c'$ are called equivalent if there exists $b \in \Pi$ such that  
$$c'_g = b \,  c_g \, g(b^{-1})$$
for all $g\in G$. The pointed set of equivalence classes is denoted by $H^1(G, \Pi)$. 
 
 \subsection{Cocycles for affine group schemes}  \label{sectCocycAffineGpSch}
 Now let $G, \Pi$ be two affine group schemes over a field $k$, and suppose that $G$ acts on $\Pi$ on the left.  
 There is a morphism
 \begin{equation}\label{GPileftact}  G \times \Pi \To \Pi \end{equation}
 such that  $g(ab) = g(a) g(b)$ for all ($R$-valued) points $g \in G$ and $a,b\in \Pi$. It can equivalently be expressed by a commutative diagram, which we leave to the reader.

 \begin{defn}Define an \emph{algebraic cocycle} over $k$ to be  a morphism of schemes 
 $$c: G   \To \Pi $$ 
 such that  for all commutative unitary $k$-algebras $R$,  the induced map on $R$-valued points  $G(R) \rightarrow \Pi(R)$ is a cocycle. 
 Denote the set of such morphisms by $Z^1_{\alg}(G,\Pi)(k)$.  

Define a functor $Z^1_{\alg}(G, \Pi)$  from commutative unitary $k$ algebras $k'$ to pointed sets
 $$k' \mapsto Z^1_{\alg}(G\times k',\Pi\times k')(k') \qquad  \quad (  \, =: Z^1_{\alg}(G, \Pi)(k')\,) \ .$$
 \end{defn} 

Modifying by a coboundary defines a  natural transformation of functors 
\begin{eqnarray}   \Pi \times Z_{\alg}^1(G,\Pi) & \overset{\sim}{\To} &  Z_{\alg}^1(G,\Pi)   \nonumber  \\
b , c & \mapsto & (g\mapsto b^{-1} c_g g(b) )  \ .\nonumber 
\end{eqnarray}

  \begin{ex} 
  \begin{enumerate} 
  \item   If $G$ is a constant group scheme of finite type, then 
  $$Z^1_{\alg}(G, \Pi)(k) \rightarrow Z^1(G(k), \Pi(k))$$ is  a bijection since every cocycle on points is algebraic. 
  \item In general, this is false. Let $k=\Q$, and  $G= \Pi = \G_m$ with $G$ acting trivially  on $\Pi$. Then $Z^1(G(k), \Pi(k))$ is the set of homomorphisms $c:\Q^{\times}\rightarrow \Q^{\times}$.   Setting $c(-1)=1$, and assigning any value to $c(p)$ for $p$ a positive prime defines such a homomorphism.  It is rarely  algebraic.
  \item  The functor $Z^1_{\alg}(G,\Pi)$ is not necessarily representable. Let $k=\Q,\Pi=\G_m$ and $G=\mu_N= \Spec \Q[x]/(x^N-1)$, the group scheme of $N$-th roots of unity, with $G$ acting trivially on $\Pi$. Then elements of the set  $Z^1_{\alg}(G,\Pi)(R)$
   are given by   algebra homomorphisms
  $$ \phi : R[x,x^{-1}] \rightarrow R[x ]/(x^N-1)    \hbox{ such that }  \Delta \phi = (\phi \otimes \phi) \Delta   \ ,$$
  where $\Delta (x) = x\otimes x$. Such a homomorphism is of the form $\phi(x) = x^n$ where   $0 \leq n < N$. Therefore $Z^1_{\alg}(G, \Pi)(R)$ is the pointed set $
\Z/N\Z $. It  is not representable since for any representable functor $F$, $F(R \times S) = F(R) \times F(S)$. 
 \end{enumerate}
  \end{ex}

  \subsection{Interpretations}  \label{sect3views} We shall view a  cocycle  $c\in Z_{\alg}^1(G,\Pi)(k)$  in three ways:
 
 \begin{enumerate}
 \item as a  morphism  of schemes $c: G  \rightarrow \Pi $,  
 \item as a homomorphism  of algebras
 $c: \Or(\Pi) \rightarrow \Or(G)$,
 \item as a point  $ c\in \Pi(\Or(G))$,
  \end{enumerate}
 satisfying, in each case,  certain conditions which are equivalent to the cocycle relation. 
 In the first case, this can be expressed by the commutativity of 
$$
\begin{array}{ccc}
  G \times G&  \To  & \Pi \times \Pi    \\
\downarrow  &   &   \downarrow \\
G  &  \overset{c}{\To}   &   \Pi
\end{array}
$$ 
where the vertical maps are multiplication, and the map along the top is 
$$G \times G \overset{\Delta \times \id}{\To} G \times G \times G \overset{c \times \id \times c}{\To} \Pi \times G \times \Pi \overset{\id \times (\ref{GPileftact}) }{\To} \Pi \times \Pi \ .$$
The conditions for $(2)$ and $(3)$ are  left to the reader. 

\subsection{Filtrations} Let $k, G, \Pi$ be as in \S\ref{sectCocycAffineGpSch}.   Suppose that there exist  increasing  filtrations  $F$ on
  the affine rings $\Or(G), \Or(\Pi)$ such that  
 \begin{itemize}
\item  $F_{-1} H = 0$   and $H= \bigcup_n F_n H$
\item  $\Delta F_n H  \subset  F_n (H \otimes \Or(G))$
 \end{itemize}
 for $H  \in \{\Or(G), \Or(\Pi)\}$. The filtration on a tensor product is defined by 
 $$F_n (H \otimes H' )= \sum_{a+b=n} F_a H \otimes F_b H' \ .$$
Therefore both the coproduct on $\Or(G)$ dual to the group law on $G$, and the coaction of $\Or(G)$ on $\Or(\Pi)$ dual to the action of $G$ on $\Pi$,  respect the filtrations $F$.
 
\begin{defn} Define 
 $Z^1_{\alg, F}(G, \Pi)(R)$ to be the set of algebraic cocycles  $c: G\times R \rightarrow \Pi \times R$ such that 
 $c^* : \Or(\Pi)\otimes R \rightarrow \Or(G)\otimes R$ respects the filtrations $F$.
 \end{defn}

 It defines a functor from commutative unitary $k$-algebras to pointed sets. 

 \begin{prop}  \label{propZ1Fscheme} Let $k$ be a field and suppose that 
  \begin{equation}\label{dimkFnOG} \dim_k  F_n \Or(G) < \infty\ .
  \end{equation} 
  Then   $Z_{\alg, F}^1(G,\Pi)$ is  representable, and hence defines an affine scheme over $k$.
\end{prop}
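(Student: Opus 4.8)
The idea is to realize $Z^1_{\alg,F}(G,\Pi)$ as a closed subscheme of a representable functor, using the finite-dimensionality hypothesis $(\ref{dimkFnOG})$ to get representability of the ambient object, and then cutting out the cocycle and filtration conditions as closed conditions. First I would use interpretation (3) of \S\ref{sect3views}: a cocycle is a point $c\in \Pi(\Or(G))$, i.e.\ an algebra homomorphism $c^*:\Or(\Pi)\to\Or(G)$, and the filtered version requires $c^*$ to respect $F$, i.e.\ $c^*(F_n\Or(\Pi))\subseteq F_n\Or(G)$ for all $n$. Since $\Or(\Pi)=\bigcup_n F_n\Or(\Pi)$ with each $F_n\Or(\Pi)$ finite-dimensional (this needs to be arranged or assumed alongside $(\ref{dimkFnOG})$; note $\Or(\Pi)$ is not assumed finite-dimensional, so one works with the filtration step by step), a filtered linear map $\Or(\Pi)\to\Or(G)$ is the same as a compatible family of linear maps $F_n\Or(\Pi)\to F_n\Or(G)$, and the functor sending $R$ to the set of such $R$-linear families is $\varprojlim_n \Hom_k(F_n\Or(\Pi),F_n\Or(G))$, which is a projective limit of finite-dimensional affine spaces, hence representable by an affine scheme --- this is exactly the mechanism of Corollary \ref{corHomFscheme}, applied with $V=\Or(\Pi)$, $W=\Or(G)$, both filtered, $F_nW$ finite-dimensional.

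Next I would impose, on this ambient affine scheme $\Hom_F(\Or(\Pi),\Or(G))$, the conditions that cut out the actual cocycles. There are two families of conditions. First, $c^*$ must be a $k$-algebra homomorphism: $c^*(xy)=c^*(x)c^*(y)$ for all $x,y\in\Or(\Pi)$ and $c^*(1)=1$. These are closed conditions --- for each pair of basis vectors of $F_n\Or(\Pi)$ one gets polynomial equations in the coordinates of the ambient scheme (the product $c^*(x)c^*(y)$ lands in $F_{2n}\Or(G)$, still finite-dimensional) --- so they define a closed subscheme. Second, $c^*$ must satisfy the cocycle relation, which in interpretation (2) of \S\ref{sect3views} is the identity expressing compatibility of $c^*:\Or(\Pi)\to\Or(G)$ with the coproduct on $\Or(G)$ and the coaction of $\Or(G)$ on $\Or(\Pi)$ dual to $(\ref{GPileftact})$; concretely, the two composites $\Or(\Pi)\to \Or(G)\otimes\Or(G)$ obtained by (i) applying $c^*$ then the coproduct $\Delta_G$, and (ii) applying the coaction $\Or(\Pi)\to\Or(\Pi)\otimes\Or(G)$, then $c^*\otimes\id$, must agree. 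By the filtration hypothesis, $\Delta_G$ maps $F_n\Or(G)$ into $F_n(\Or(G)\otimes\Or(G))$ and the coaction maps $F_n\Or(\Pi)$ into $F_n(\Or(\Pi)\otimes\Or(G))$, so this relation, restricted to $F_n\Or(\Pi)$, is again a finite set of polynomial equations in finitely many coordinates of the ambient scheme. Hence it too defines a closed subscheme.

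The functor $Z^1_{\alg,F}(G,\Pi)$ is then the intersection of these closed subschemes inside the representable $\Hom_F(\Or(\Pi),\Or(G))$, hence itself representable by an affine $k$-scheme. The one point that requires care --- and which I would flag as the main technical obstacle --- is the passage from "the conditions hold on $R$-points for every finitely generated/arbitrary $R$" to "the conditions are closed subfunctors defined by honest equations," i.e.\ checking that being an algebra homomorphism and satisfying the cocycle identity are genuinely \emph{functorial, representable, closed} conditions rather than merely conditions on $k$-points; this is where one must be scrupulous about the difference illustrated by the non-representability example (3) of the preceding \texttt{ex} environment, and where the filtration hypothesis $(\ref{dimkFnOG})$ does the real work by keeping everything finite-dimensional level by level so that Yoneda-style arguments apply. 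Everything else is the bookkeeping of intersecting closed subschemes and taking the projective limit, which I would not spell out in detail.
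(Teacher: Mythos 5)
Your overall architecture is the same as the paper's: realise the functor inside the representable scheme of filtered linear maps $\mathrm{Hom}_F(\Or(\Pi),\Or(G))$ supplied by Corollary \ref{corHomFscheme} (for which only $\dim_k F_n\Or(G)<\infty$ is needed --- your parenthetical worry about $F_n\Or(\Pi)$ being finite-dimensional is unnecessary, since the corollary allows the source to be an arbitrary filtered colimit), and then cut out the cocycles by closed algebraic conditions imposed level by level in the filtration. The paper differs in one organisational point: it never writes the dual cocycle identity at all. Instead it uses $(\ref{Z1asHom})$ to identify $Z^1$ with the kernel of $\mathrm{Hom}_F(G,\Pi\rtimes G)\to\mathrm{Hom}_F(G,G)$, so the only conditions it has to impose on the ambient linear maps are those of being a (filtered) Hopf algebra homomorphism; the cocycle relation is absorbed into the semidirect product. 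This buys a cleaner closed-condition analysis at the cost of introducing $\Pi\rtimes G$ and its induced filtration on $\Or(\Pi)\otimes\Or(G)$.

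There is one concrete error in your version, precisely at the step the paper's semidirect-product trick avoids. The dual of the cocycle relation $c_{gh}=c_g\,g(c_h)$ is \emph{not} the agreement of $\Delta_G\circ c^*$ with $(c^*\otimes\id)\circ\rho$, where $\rho:\Or(\Pi)\to\Or(\Pi)\otimes\Or(G)$ is the coaction: that identity encodes only the rule $c_{gh}=h(c_g)$ and drops the factor $c_g$. The correct identity must also involve the coproduct $\Delta_\Pi$ of $\Or(\Pi)$ (dual to the product $c_g\cdot g(c_h)$ in $\Pi$) and the multiplication of $\Or(G)$: in Sweedler notation, $\Delta_G(c^*f)=\sum \bigl(c^*f_{(1)}\bigr)\cdot f_{(2)(1)}\otimes c^*f_{(2)(0)}$, where $\Delta_\Pi f=\sum f_{(1)}\otimes f_{(2)}$ and $\rho(f_{(2)})=\sum f_{(2)(0)}\otimes f_{(2)(1)}$. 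This is a local, fixable slip --- the corrected identity is still built from filtration-preserving structure maps and the unknown $c^*$, so it still defines a closed subscheme and your argument goes through --- but as written your equations cut out the wrong subfunctor.
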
 

\begin{proof} Suppose that $A$ is an affine group scheme, and that $\Or(A)$ is equipped with an increasing filtration $F$ as above. 
Let $\mathrm{Hom}_F(G, A)$ denote the functor whose $R$-valued points, for $R$ a commutative unitary $k$-algebra,  are  $R$-linear  morphisms  of Hopf algebras
$$\phi : \Or(A)\otimes_k R \To \Or(G) \otimes_k R$$
such that $\phi$ respects $F$. Corollary \ref{corHomFscheme} implies that the functor of $k$-linear maps
$ \mathrm{Hom}_F( \Or(A) , \Or(G))$
is an affine scheme over $k$,   by $(\ref{dimkFnOG})$.
The subspace 
$$\mathrm{Hom}_F(G,A)(R)\,  \, \subset \, \,  \mathrm{Hom}_F( \Or(A) , \Or(G))(R)$$
consists of linear maps satisfying algebraic equations including:
$$\phi(xy) = \phi(x) \phi(y)  \quad \hbox{ and }  \quad  \Delta(\phi(x)) = (\phi \otimes \phi )\Delta(x)$$
for all $x, y \in \Or(A)$. There are additionally antipode, unit and counit conditions which we omit.  Since these are all algebraic and defined over $k$, it follows that $\mathrm{Hom}_F(A,G)$ is a closed subscheme of $\mathrm{Hom}_F( \Or(A) , \Or(G))$. 

Now apply this remark in the cases
$$A = G \qquad \hbox{ and } \qquad A = \Pi \rtimes G $$
where the semi-direct product on the right is given by the left action $(\ref{GPileftact})$ of $G$ on $\Pi$. There is an isomorphism of algebras
$\Or(\Pi \rtimes G) \cong \Or(\Pi) \otimes \Or(G)$
which endows $\Or(\Pi \rtimes G)$ with a filtration $F$ induced from that of $\Or(\Pi)$ and $\Or(G)$. 
It follows that $\mathrm{Hom}_F(G,\Pi)$ and $\mathrm{Hom}_F(G, \Pi \rtimes G)$ are both affine schemes. Since these scheme structures are natural, the morphism 
$\Pi \rtimes G \rightarrow G$ induces a morphism of affine schemes 
$$\mathrm{Hom}_F(G, \Pi \rtimes G) \To  \mathrm{Hom}_F(G, G)$$
and its kernel (inverse image of the identity) is also a  scheme. Via  $(\ref{Z1asHom})$, the kernel can be identified,  as a functor, with $Z^1_{\alg, F}(G, \Pi)$, which is therefore representable. 
\end{proof}

The proof shows that $Z^1_{\alg,F}(G,\Pi)$ is a closed subscheme of $\mathrm{Hom}_F(\Or(\Pi), \Or(G))$.  By Yoneda's lemma, the natural transformation of functors
$$Z^1_{\alg, F}(G,\Pi) \times G   \To \Pi $$
given on $R$-points by $(c,g) \mapsto c_g$ 
is  a  morphism of schemes. It is functorial in $G, \Pi$:   given affine group 
schemes $G' \rightarrow G$ and  $\Pi \rightarrow \Pi'$ whose affine rings are   equipped with filtrations in the manner described above, the natural maps   $Z_{\alg, F}^1(G,\Pi) \rightarrow Z_{\alg,F}^1(G, \Pi')$  and $Z_{\alg, F}^1(G, \Pi) \rightarrow Z_{\alg, F}^1(G', \Pi)$ are morphisms  of schemes.
 
 \begin{example} Let $U$ be a  pro-unipotent affine group scheme. Its affine ring $\Or(U)$ is equipped with a canonical filtration $C_n \Or(U)$ called the  coradical filtration. It is   defined by 
 $C_{-1} \Or(U) =0$,  $C_0 \Or(U) = k$, and for $n\geq 1$ 
 $$x\in C_n \Or(U)_+   \qquad \Longleftrightarrow \qquad  (\Delta')^n x = 0 $$
 where $\Delta'$ is the reduced coproduct  on $\Or(U)$ defined by $\Delta' = \Delta - \id \otimes 1 - 1 \otimes \id$,
 and $\Or(U)_+$ is the kernel of the augmentation $\varepsilon: \Or(U) \rightarrow k$.  Since $U$ is connected, we have  $\Or(U) = \Or(U)_+ \oplus k$.  
 The space $C_1 \Or(U)_+$ is the  space of primitive elements of $\Or(U)$. 
 Any morphism $U \rightarrow U'$ of prounipotent affine group schemes necessarily respects the coradical filtration since it is defined purely in terms of the coproducts.

 Since the kernel of $\Delta'$ is $C_1\Or(U)_+$, it follows from the equation 
 $$\Delta' C_n   \subset  \sum_{1 \leq i \leq n-1}  C_i \otimes C_{n-i}$$
 and induction on $n$ that 
 $$\dim C_1 \Or(U) < \infty   \qquad \Longleftrightarrow \qquad \dim C_n \Or(U) < \infty \quad \hbox{ for all } n \ .$$
 Note that  the space of primitive elements $C_1 \Or(U)_+ $ is isomorphic to $H^1(U; k)$ \cite{NotesMot}.
  \end{example} 
 
 \begin{cor} Let $U, \Pi$ be  pro-unipotent affine group schemes  with $\dim H^1(U;k) <\infty $. Then  the functor of morphisms 
 $\mathrm{Hom}(U,\Pi)$ 
 is an affine  scheme.
 
 If $G$ and $\Pi$ are both pro-unipotent and $\dim H^1(G; k)<\infty $ then 
 $Z_{\alg}^1(G, \Pi)$
 is  an affine  scheme. 
  \end{cor}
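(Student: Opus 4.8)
The plan is to deduce both assertions from the coradical filtration, using the scheme-theoretic machinery already set up for Proposition~\ref{propZ1Fscheme} and Corollary~\ref{corHomFscheme}. For the first statement, I would equip $\Or(U)$ and $\Or(\Pi)$ with their coradical filtrations $C$. By the Example preceding the statement, $C_1\Or(U)_+\cong H^1(U;k)$, so the hypothesis $\dim_k H^1(U;k)<\infty$, together with $\Delta' C_n\subset\sum_{1\le i\le n-1}C_i\otimes C_{n-i}$ and induction on $n$, gives $\dim_k C_n\Or(U)<\infty$ for every $n$. The coradical filtration is a coalgebra filtration, and since it is defined purely in terms of the coproduct, every homomorphism of affine rings of pro-unipotent group schemes (over any base $R$) automatically preserves it; hence $\mathrm{Hom}(U,\Pi)=\mathrm{Hom}_C(U,\Pi)$ in the notation of the proof of Proposition~\ref{propZ1Fscheme}. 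Corollary~\ref{corHomFscheme}, applied with $V=\Or(\Pi)$, $W=\Or(U)$ and $F=C$ (valid because $C_n\Or(U)$ is finite-dimensional), shows that the functor $\mathrm{Hom}_C(\Or(\Pi),\Or(U))$ of filtered linear maps is an affine scheme; imposing the algebraic conditions that $\phi$ be multiplicative, compatible with the coproducts, and compatible with the units, counits and antipodes cuts out $\mathrm{Hom}(U,\Pi)$ as a closed subscheme, exactly as in that proof.

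For the second statement, I would form the semidirect product $\Pi\rtimes G$ attached to the given left action of $G$ on $\Pi$ (written $G\rtimes\Pi$ in $(\ref{Z1asHom})$); being an extension of a pro-unipotent group by a pro-unipotent group, it is again pro-unipotent. Applying the first statement with source $U=G$ (its $H^1$ is finite-dimensional by hypothesis) and target $\Pi\rtimes G$, respectively $G$, gives that $\mathrm{Hom}(G,\Pi\rtimes G)$ and $\mathrm{Hom}(G,G)$ are affine schemes. The projection $\Pi\rtimes G\to G$ is a homomorphism of pro-unipotent group schemes, so post-composition with it realises a morphism of affine schemes $\mathrm{Hom}(G,\Pi\rtimes G)\to\mathrm{Hom}(G,G)$ (this functoriality is built into the construction, cf. the remarks following Proposition~\ref{propZ1Fscheme}). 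By the identification $(\ref{Z1asHom})$, which is natural in the test algebra, the fibre of this morphism over the rational point $\mathrm{id}_G\in\mathrm{Hom}(G,G)(k)$ --- i.e. the homomorphisms $G\to\Pi\rtimes G$ splitting the projection --- is precisely the functor $Z^1_{\alg}(G,\Pi)$. A fibre of a morphism of affine schemes over a rational point is a closed subscheme, so $Z^1_{\alg}(G,\Pi)$ is representable by an affine scheme.

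Since the corollary is essentially a repackaging of earlier results, I do not expect a single deep obstruction; the steps needing care are (i) the identity $\mathrm{Hom}(U,\Pi)=\mathrm{Hom}_C(U,\Pi)$ in the first part, which rests on the coradical filtration being intrinsic to the coalgebra structure (so that no Hopf morphism can fail to respect it) and on its compatibility with base change $k\to R$, and (ii) checking in the second part that $(\ref{Z1asHom})$ is functorial enough in $R$ to identify $Z^1_{\alg}(G,\Pi)$ with the scheme-theoretic fibre rather than merely its set of $k$-points. One point worth flagging is that this route deliberately avoids applying Proposition~\ref{propZ1Fscheme} directly to $Z^1_{\alg}(G,\Pi)$ with the coradical filtrations on $\Or(G)$ and $\Or(\Pi)$: the coaction $\Or(\Pi)\to\Or(\Pi)\otimes\Or(G)$ need not be graded-compatible with those filtrations, whereas passing through the $\mathrm{Hom}$-scheme of the semidirect product and taking a fibre sidesteps this issue entirely.
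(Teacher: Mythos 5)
Your proof is correct and follows the route the paper intends: equip the affine rings with their coradical filtrations, use the preceding Example to get $\dim_k C_n\Or(U)<\infty$ from $\dim_k H^1(U;k)<\infty$, obtain the Hom-scheme from Corollary \ref{corHomFscheme} by cutting out the (closed) Hopf-algebra conditions, and realise $Z^1_{\alg}(G,\Pi)$ as the fibre of $\mathrm{Hom}(G,\Pi\rtimes G)\to\mathrm{Hom}(G,G)$ over $\mathrm{id}_G$ via $(\ref{Z1asHom})$, exactly as in the proof of Proposition \ref{propZ1Fscheme}. Your closing caveat is also well taken: a cocycle need not respect the coradical filtrations of $\Or(G)$ and $\Or(\Pi)$ separately (the coaction can raise coradical degree, e.g.\ for $\G_a$ acting on $\G_a^2$ by $t\cdot(u,v)=(u,v+tu)$ the cocycles involve $t^2$), so the corollary is not a literal instance of Proposition \ref{propZ1Fscheme} with $F$ the coradical filtration, and passing through the intrinsic coradical filtration of the semidirect product --- which every homomorphism of pro-unipotent group schemes automatically preserves --- is the right way to sidestep this.
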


  \subsection{Torsors}  \label{sectTorsors}
Now let $\Pi_x$ be a right $\Pi$-torsor with $G$-action. This means that $\Pi_x$ is an affine scheme over $k$, equipped with a morphism 
$$(q, p)  \mapsto  q.p    :  \Pi_x \times \Pi  \To   \Pi_x  $$ 
 which defines a right action of $\Pi$ on $\Pi_x$, i.e., $q. (p p') = (q. p).p'$. Furthermore, $\Pi_x, \Pi$ both admit  left actions by $G$, i.e., morphisms
 $$G \times \Pi_x \To   \Pi_x \quad \hbox{ and } \quad G \times \Pi \To \Pi $$
  which define group actions,   which are compatible with the group law on $\Pi$ and right action of $\Pi$ on $\Pi_x$.
 All this can be translated into commutative diagrams, which we omit. The torsor condition is the 
  property that 
 \begin{equation} \label{torsordef} (q, p) \mapsto (q, q.p): \Pi_x \times \Pi  \To \Pi_x \times \Pi_x 
 \end{equation} 
  is an isomorphism of affine schemes over $k$. Finally, one requires the existence of a point on $\Pi_x(R)$ for some $R$ faithfully flat extension of $k$.

Since $k$ is a field, let us suppose   that we are given a point $1_x \in \Pi_x(k')$ for some extension $k'$ of $k$.  This data defines an algebraic cocycle over $k'$ as follows. 
 For all    $g\in G(R)$, for $R$ a commutative unitary $k'$ algebra, it follows from $(\ref{torsordef})$ that there exists a unique $\alpha_g \in \Pi(R)$ such that
$$ g(1_x) = 1_x . \alpha_g \ .$$
This defines a map $\alpha: G(R) \rightarrow \Pi(R)$. 
The cocycle condition follows from the uniqueness of $\alpha$, and the equations
$gh (1_x) = 1_x. \alpha_{gh}$ and 
$$g (h(1_x) )= g( 1_x . \alpha_h)  = 1_x. \alpha_g  g(\alpha_h)\ ,$$
which imply that $\alpha_{gh} = \alpha_g g(\alpha_h)$. That $\alpha$ is algebraic can be seen as follows. Denote by $G', \Pi', \Pi'_x$ the extension of scalars of $G, \Pi, \Pi_x$  to $k'$. 
Restricting the base change of $(\ref{torsordef})$ to $\Spec k' \times \Pi' \subset \Pi'_x \times \Pi'$ and projecting onto $\Pi'_x$   defines an isomorphism 
$$1_x : \Pi'   \overset{\sim}{\To} \Pi'_x  \ .$$
It is given on points by $p\mapsto 1_x. p$. Then $\alpha$  comes from the morphism of schemes
$$  G' \To \Pi'_x    \overset{(1_x)^{-1}}{\To} \Pi' $$
where the first map is given by the action on $1_x$, i.e., $ g\mapsto g(1_x)$. In conclusion:
$$\alpha \in Z^1_{\alg}(G, \Pi)(k')\ .$$

  This cocycle can be viewed in three different ways (\S \ref{sect3views}):
  \begin{enumerate}
\item  as the morphism $\alpha: G' \rightarrow \Pi'$ as constructed above.
\item as an algebra homomorphism $\Or(\Pi')\rightarrow \Or(G')$ given by 
$$\Or(\Pi') \overset{(1^*_x)^{-1}}{\To} \Or(\Pi'_x) \overset{\Delta}{\To} \Or(\Pi'_x) \otimes_{k'} \Or(G') \overset{1_x \otimes \id}{\To} \Or(G')$$
where $\Delta$ is the coaction dual to the action of $G'$ on $\Pi'_x$. 
\item  Setting $R= \Or(G')$ in $(1)$ gives  a map   $\alpha : G'(\Or(G')) \rightarrow  \Pi'(\Or(G'))$. The image of the identity element is   a point  $\alpha(\id) \in \Pi'(\Or(G'))$.
\end{enumerate}
Changing the point $1_x$  has the effect of modifying the cocycle by a coboundary. Let $1'_x \in \Pi_x(k')$ be another point  and $\alpha'$ the associated cocycle. 
Then there exists a unique  $b \in \Pi(k')$ such that $1'_x = 1_x. b$, and 
$$\alpha'_g = b^{-1}  \alpha_g  g(b)\ .$$

\subsection{Weight filtrations}  \label{sectWeightFilt} In our examples, 
the affine rings
$\Or(\Pi_x),   \Or(\Pi) ,  \Or(G)$
are equipped with increasing exhaustive filtrations $W$, such that $W_{-1}=0$. These filtrations are compatible in that the coactions satisfy:
$$ \Delta:  W_n \Or(\Pi) \To W_n  \big( \Or(\Pi_x) \otimes \Or(\Pi)\big) $$
$$\Delta:  W_n \Or(\Pi_x) \To W_n  \big( \Or(\Pi_x) \otimes \Or(G) \big)$$
and similarly in the second equation with $\Pi_x$ replaced by $\Pi$. Furthermore, for each of these three affine rings, the filtration $W$ is compatible with
the algebra structure: 
$$W_m . W_n \subset W_{m+n}\ .$$
 Finally, the morphisms considered above are strict with respect to the filtration $W$. 
Let $1_x \in \Pi_x(k')$. Then repeating the argument of  \S\ref{sectTorsors} we find that 
$$   \Or(\Pi_x)  \overset{\Delta}{\To} \Or(\Pi_x) \otimes \Or(\Pi)  \overset{1_x \otimes \id}{\To} k' \otimes \Or(\Pi)$$ 
 preserves the filtration $W$ and induces an isomorphism  $k' \otimes \Or(\Pi_x) \cong k' \otimes \Or(\Pi)$.  By strictness, it defines an isomorphism  for every $n$:
$$ 1_x :  k' \otimes W_n \Or(\Pi_x) \overset{\sim}{\To} k'\otimes W_n \Or(\Pi)   \ .$$ 
Writing $\Pi' =\Pi\times_k k'$, and $G'=G\times_k k'$,  consider the chain of homomorphisms
$$W_n \Or(\Pi') \overset{1_x^{-1}}{\To} W_n \Or(\Pi'_x) \overset{\Delta}{\To} W_n \big( \Or(\Pi'_x) \otimes_{k'} \Or(G)\big) \overset{1_x \otimes \id}{\To} W_n \Or(G) $$
and we deduce that the cocycle associated to the point $1_x$ preserves $W$, i.e., 
$\alpha \in  Z^1_{\alg, W}(G, \Pi)(k')$. 
The point of this construction  is that if  $k$ is a field and $W_n \Or(G)$ is  of finite dimension for all $n$,  $Z^1_{\alg,W}(G, \Pi)$ will define a scheme by proposition \ref{propZ1Fscheme}.  By replacing $G$ with the quotient through which it acts on $\Pi$,  we can always assume that $G$ acts faithfully on $\Pi$. With this in mind, our criterion  for representablity will be satisfied  
in all the examples we shall need to consider.

 \begin{rem}
  The approach to proving representability of a functor of cocycles in \cite{Kim1} is different, and involves an inductive argument via long exact sequences.
 See also \cite{HainRemarks}. 
  \end{rem} 
 \subsection{Determinant equation}  Now let us suppose, as above, that $Z= Z^1_{\alg,F}(G,\Pi)$ is a scheme, for some filtration $F$. 
 Then we can apply the formalism of \S\ref{sect: det}. Let us spell this out explicitly in this situation. 
    We have a morphism  of schemes
    \begin{equation}  (c,g) \mapsto c_g: Z \times G  \rightarrow   \Pi 
      \nonumber  \end{equation} 
  whose dual is the morphism of algebras  (which we again denote by $\co$)
  \begin{eqnarray} \label{coOZ}
  \co: \Or(\Pi) \To \Or(G) \otimes_k \Or(Z) 
    \end{eqnarray} 
  This uses, of course, the fact that $Z$ is a scheme.   Given an $k'$-valued cocycle $\phi \in Z(k')$, for some commutative unitary $k$-algebra $k'$, the morphism of algebras
  $$\phi: \Or(\Pi)  \To  \Or(G)\otimes_k k'$$
  is obtained by composing  $(\ref{coOZ})$ with $\phi: \Or(Z) \rightarrow k'$, namely 
  \begin{equation}\label{phiwequation} \phi(v) = (\id \otimes \phi) \co(v)\  . 
  \end{equation} 
  Proceeding in the same manner as \S\ref{sect: det},  $(\ref{coOZ})$ extends to an $\Or(G)$-linear map 
  $$\co :\Or(G) \otimes_k  \Or(\Pi)\To     \Or(G) \otimes_k \Or(Z)$$ 
  and  by taking exterior powers, gives an $\Or(G)$-linear map
  $$\textstyle{\bigwedge^N}\co :\Or(G) \otimes_k  \textstyle{\bigwedge^N}\Or(\Pi)\rightarrow     \Or(G) \otimes_k  \textstyle{\bigwedge^N} \Or(Z)\ .$$

  In the same manner as lemma $\ref{lemdeteqn}$, we deduce the following proposition.
  
  \begin{prop}  \label{propdetZ} Suppose that $N= \dim_k \Or(Z)+1$ is finite, and let $k'$ be a commutative unitary $k$-algebra.  Given  any  set of $N$ cocycles 
  $$\phi_1, \ldots, \phi_N \in Z^1(G,\Pi)(k') \ ,$$ 
 the following determinant equation holds:
 \begin{equation} \label{detequationZ}  \det  \big( \phi_i(v_j) \big)_{1\leq i,j\leq N} =0 
\end{equation}
for all   $v_1,\ldots, v_N \in  \Or(\Pi)$, where   $\phi_i(v_j)$  is defined via $(\ref{phiwequation})$. 
  \end{prop}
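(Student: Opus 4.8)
The plan is to reduce Proposition \ref{propdetZ} to Lemma \ref{lemdeteqn} by identifying the abstract setup there with the concrete data $(V,W,A)$ appearing in the earlier linear-algebra section. Concretely, I would take $V = \Or(\Pi)$, $W = \Or(G)$, and $A = Z = Z^1_{\alg,F}(G,\Pi)$, the latter viewed as a closed subscheme of $\mathrm{Hom}_F(\Or(\Pi),\Or(G))$ by the discussion following Proposition \ref{propZ1Fscheme}. The map $\co$ of $(\ref{coOZ})$ is then exactly the coaction $(\ref{secondco})$ (equivalently its filtered incarnation from \S\ref{sect: filt}) attached to this inclusion $A \subseteq \mathrm{Hom}_F(V,W)$, so the factorization $(\ref{phiwequation})$, $\phi(v) = (\id \otimes \phi)\co(v)$, is precisely the universal factorization $\phi = (\id \otimes \ev_\phi)\co$ of \S\ref{sectcoactSchemeA}, with $\ev_\phi : \Or(Z) \to k'$ the homomorphism representing the cocycle $\phi$.

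Next I would run the exterior-power argument verbatim. Since $N = \dim_k \Or(Z) + 1$ is assumed finite, $\bigwedge^N \Or(Z) = 0$, hence the $\Or(G)$-linear map $\bigwedge^N\co : \Or(G)\otimes_k \bigwedge^N\Or(\Pi) \to \Or(G)\otimes_k \bigwedge^N\Or(Z)$ is the zero map. For cocycles $\phi_1,\dots,\phi_N \in Z(k')$ and vectors $v_1,\dots,v_N \in \Or(\Pi)$, the element $\det(\phi_i(v_j))_{1\le i,j\le N} \in \Or(G)\otimes_k k'$ is the image of $v_1\wedge\cdots\wedge v_N$ under the composite
\[
\textstyle{\bigwedge^N}\Or(\Pi) \xrightarrow{\,\bigwedge^N\co\,} \Or(G)\otimes_k \textstyle{\bigwedge^N}\Or(Z) \xrightarrow{\,\id\otimes(\ev_{\phi_1}\wedge\cdots\wedge\ev_{\phi_N})\,} \Or(G)\otimes_k k' \cong \Or(G)\otimes_k k',
\]
after extending scalars to $\Or(G)\otimes_k k'$; this is the same bookkeeping as in \S\ref{sect: det} showing that $\phi_1\wedge\cdots\wedge\phi_N : \bigwedge^N V \to W$ factors through $\bigwedge^N\co$. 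Since the first arrow already vanishes, the determinant is $0$, which is exactly $(\ref{detequationZ})$.

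The only genuinely substantive point — and the step I expect to be the main obstacle — is checking that $Z = Z^1_{\alg,F}(G,\Pi)$ really is a scheme with $\dim_k \Or(Z) < \infty$, so that the formalism of \S\ref{sect: det} applies; but this is precisely what the hypothesis ``$N = \dim_k\Or(Z)+1$ is finite'' grants us (representability itself was established in Proposition \ref{propZ1Fscheme} under the finiteness of $F_n\Or(G)$, which is the situation we are in). A secondary technical care-point is the base change: one must pass from $k$-coefficients to the commutative $k$-algebra $k'$ and observe that $\co$, being a map of $k$-algebras, extends $k'$-linearly, and that a $k'$-point $\phi$ of $Z$ is the same as a $k'$-algebra homomorphism $\Or(Z)\otimes_k k' \to k'$, so that $(\ref{phiwequation})$ makes sense and the exterior-power vanishing (which holds already over $k$) is preserved. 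Once these compatibilities are in place the proof is a one-line invocation of Lemma \ref{lemdeteqn} with the dictionary above, so I would keep the write-up short: state the identification $(V,W,A) = (\Or(\Pi),\Or(G),Z)$, note that $(\ref{coOZ})$ and $(\ref{phiwequation})$ are the instances of $(\ref{secondco})$ and the factorization of \S\ref{sectcoactSchemeA}, and then quote Lemma \ref{lemdeteqn}.
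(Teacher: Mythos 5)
Your proposal is correct and follows the paper's own argument essentially verbatim: the paper likewise takes $(V,W,A)=(\Or(\Pi),\Or(G),Z)$, notes that $(\ref{coOZ})$ and $(\ref{phiwequation})$ are the instances of $(\ref{secondco})$ and the factorization of \S\ref{sectcoactSchemeA}, and concludes by the exterior-power vanishing of Lemma \ref{lemdeteqn}. The base-change and representability remarks you flag are exactly the points the paper also relies on (via Proposition \ref{propZ1Fscheme} and the hypothesis that $\dim_k\Or(Z)$ is finite).
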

 
  \begin{rem} Our scheme-theoretic formulation allows us to take $k'$ to be a ring other than $k$. It will turn out that integral points on curves will only give rise to $k$-valued cocycles. Considering a larger ring $k'$ gives considerable flexibility to the method and enables us to consider `virtual cocycles' constructed out of divisors \S\ref{sectVirtual}. 
  \end{rem}

 \section{Main argument : mixed Tate case}   \label{sectMainargument}
 
 Throughout we shall argue with $R$-points on affine group schemes without specifying the ring $R$. 
 All tensor products are over $k$ unless specified otherwise. 
 We compose paths in the functional sense, and let Tannaka groups act on fundamental groups on the left. 
 Reversing both conventions simultaneously  still leads to agreeable formulae, but changing one or other convention on its own does not. 

 At the risk of repeating some of the arguments in \S\ref{sectfinal}, we consider only the mixed Tate case in this section, since
this is one of the few  situations in which the results are known to hold unconditionally.

 \subsection{Points and cocycles}  \label{sect: cx} 
 Let $\Or_S$ be the ring of $S$-integers in a totally real number field $k$, and let  $(X,x,y)$  be such that its motivic fundamental group is  mixed Tate  unramified over $S$. 
 Let us fix a possibly tangential  base point  $0 \in X(\Or_S)$. For any other  (tangential) point  $x \in X_S$, let  ${}_x\Pi^{\dr}_0$ denote the  fundamental torsor of paths from $0$ to $x$ with respect to the canonical fiber functor. 
 Via the right-torsor structure  $(\ref{dRTorsor})$ 
\begin{equation} \label{torsor} \xpio \times \opio \To \xpio \end{equation}
the canonical path ${}_x 1_0 \in \xpio(\Q)$  defines an algebraic cocycle \S\ref{sectTorsors}
 $$c^x \in Z^1_{\alg}(\GdrS, {}_0\Pi_0^{\omega})(\Q) \ .$$
Now consider any quotient ${}_0 \Pi_0  \rightarrow \Pi$ in the category $\MT(\Or_S)$, and 
denote its de Rham  realisation  by $\Pi^{\dr}$.  Equivalently, 
the affine ring $\Or(\Pi^{\dr})$  is  a Hopf subalgebra of $\Or(\opio)$ and is stable under the right $\Or(\GdrS)$-coaction.  
Via the natural transformation 
$$Z^1_{\alg}(\GdrS,  {}_0\Pi_0^{\omega}) \To Z^1_{\alg}(\GdrS, \Pi^{\dr})$$
we obtain an algebraic cocycle:
\begin{equation} \label{cxinpi} c^x  \quad \in \quad  Z_{\alg}^1(\GdrS, \Pi^{\dr})(\Q) \ . 
\end{equation}

\subsection{Weight filtration} 
 The  fundamental groupoids  $ {}_0\Pi_0^{\omega}, {}_x\Pi_0^{\omega}  , \Pi  $  are equipped with weight filtrations satisfying the conditions of \S\ref{sectWeightFilt}. 
  The same is true of the affine ring $\Or(G^{\omega}_S)$, which satisfies 
  $$W_{-1}\Or(G^{\omega}_S)= 0 \qquad \hbox{ and } \qquad  \dim W_n(\Or(G^{\omega}_S))<\infty$$ 
  for all $n$. The first property was proved in \cite{NotesMot}, \S.3.4 (note that this is only true if the  weight filtration is defined, via the Tannakian formalism, with respect to the conjugation action of $G^{\omega}_S$ on itself) and the second follows from the fact that $S$ is finite. 
  It follows from \S\ref{sectWeightFilt} that the cocycle $c^x$ satisfies
 $$c^x \in Z^1_{\alg, W}(G^{\omega}_S, \Pi^{\dr})(\Q) \ , $$
 and furthermore  $Z^1_{\alg, W}(G^{\omega}_S,  \Pi^{\dr})$ is an affine scheme over $\Q$.

 \subsection{Weight grading}   We can be more precise still using the fact that the weight filtration actually splits in the canonical realisation.

  The subgroup $\G_m \leq \GdrS$ acts trivially on the path ${}_x 1_0$ $(\ref{TateCanpath})$.
Since the cocycle $c^x$ is defined (on points) by $ g( {}_x 1_0)  ={}_x 1_0 \times  c^x_g $, it follows that $c^x_g=1$ for all $ g\in \G_m$.

Denote the space  of left $\GdrS$-cocycles  trivial on $\G_m$ by 
$$Z_{\G_m}^1(  \GdrS, \Pi) = \mathrm{ker}  \Big( Z_{\alg, W}^1( \GdrS, \Pi) \To Z_{\alg, W}^1(\G_m, \Pi) \Big) \ .  $$
By proposition $\ref{propZ1Fscheme}$, this is an  affine scheme over $\Q$, since the map on the right is a morphism of affine schemes over $\Q$ (as $W_n \Or(\G_m)$ is finite-dimensional). 

Define $Z^1_{\alg, W}(\UdrS, \Pi)^{\G_m}$ to be the closed subscheme of $  Z^1_{\alg, W}(\UdrS, \Pi)$ whose points consists of cocycles satisfying the equation
 \begin{equation} \label{cocycleonU} c_{gug^{-1} }= g(c_u) \quad \hbox{  for  } g\in \G_m, u \in \UdrS \ .
 \end{equation} 
Then restriction to $\UdrS$ defines   an isomorphism of schemes 
$$  Z_{\G_m}^1(  \GdrS, \Pi)  \overset{\sim}{\rightarrow} Z^1_{\alg, W}(\UdrS, \Pi)^{\G_m}\ .$$
Indeed, if $c_g =1 $ for all $ g \in \G_m$ then the cocycle equation $c_{gh} = c_g g(c_h)$ implies that 
$(\ref{cocycleonU})$ holds.  Conversely, any    $\UdrS$-cocycle $c$  satisfying $(\ref{cocycleonU})$ defines an element $c'\in Z_{\G_m}^1(\GdrS, \Pi)$ by setting $c'_{ug} = c_u$ for  $u \in \UdrS$, $g \in \G_m$.

\subsection{Interpretation as a  homomorphism} We can think of   $c^x$ as an algebra homomorphism 
$c^x: \Or(\Pi) \rightarrow \Or(\GdrS)$. Its restriction to $\UdrS$ defines 
 a homomorphism 
\begin{equation} \label{cxashom}  c^x : \Or(\Pi) \To \Or(U^{\omega}_S) =   \Pe_S^{\uu}\ .
\end{equation} 
Equation $(\ref{cocycleonU})$ is equivalent to the fact that $c^x$ is $\G_m$-equivariant, where $\G_m$ acts via conjugation on $\Or(\U^{\omega}_S)$. In other words, $(\ref{cxashom})$ respects the weight-gradings on both sides.
 Explicitly,  the canonical de Rham paths define isomorphisms:
 \begin{equation}\label{Sect7.4canisom}  {}_0 \Pi^{\dr}_0 \cong {}_x \Pi^{\dr}_0   \qquad \hbox{ and } \qquad  \Or( {}_0 \Pi^{\dr}_0) \cong \Or(  {}_x \Pi^{\dr}_0) \ . \end{equation} 
The homomorphism $(\ref{cxashom})$ can then  be defined  by the composition of  maps
$$ c^x:\Or(\Pi) \subset \Or(\opio) \cong \Or(\xpio) \overset{\Delta}{\To} \Or(\xpio) \otimes \Or(\UdrS) \overset{{}_x 1_0 \otimes \id}{\To} \Or(\UdrS)\ , $$
where $\Delta$ is the coaction dual to the left action of $\UdrS$ on $\xpio$.

 \subsection{Interpretation via motivic iterated integrals} 
 We can also think of  $c^x$ as   a  $\Pe_S^{\uu}$-valued point on $\Pi$, i.e., $c \in \Pi(P_S^{\uu})$.   
 
Indeed, the map $\Delta$ in the previous subsection is nothing other than the restriction  to $\Or(\xpio)$ of the (unipotent) universal comparison isomorphism 
$$\Or(\xpio) \otimes \Pe_S^{\uu} \overset{\sim}{\To} \Or(\xpio) \otimes \Pe_S^{\uu}\ .$$
It follows that the composition 
$$   \Or(\xpio) \overset{\Delta}{\To} \Or(\xpio) \otimes \Pe^{\uu}_S \overset{{}_x 1_0 \otimes \id}{\To} \Pe^{\uu}_S$$
is given  for any $w \in  \Or(\xpio)$ as a matrix coefficient 
$ w \mapsto     [ \Or(\xpio), w, {}_x 1_0]^{\uu}. $
\begin{defn}  \label{defnIuu} For any $w \in \Or(\opio)$ let us denote by 
 \begin{equation} I^{\uu}(x;w;0)= [ \Or(\xpio), w, {}_x 1_0]^{\uu} 
 \end{equation} 
where $w$ is viewed in $\Or(\xpio)$ via the isomorphism $(\ref{Sect7.4canisom})$. It could be  called a   `unipotent de Rham motivic iterated integral'  of $w$ from $0$ to $x$.
\end{defn} 
It follows that $c^x \in \Pi(\Pe^{\uu}_S)$ is the homomorphism 
\begin{eqnarray} 
\Or(\Pi)  & \To &  \Pe^{\uu}_S \\
w  &\mapsto&  I^{\uu}(x;w;0)\ . \nonumber 
\end{eqnarray} 
 Viewed yet another way, $c^x$ is the image of the point ${}_x1_0$ under the composition 
 $$\xpio(\Pe^{\uu}_S) \overset{\sim}{\To} \xpio(\Pe^{\uu}_S) \overset{(\ref{Sect7.4canisom})  }{\cong} \opio(\Pe^{\uu}) \To \Pi(\Pe^{\uu}_S)$$
where the first isomorphism is the (unipotent) universal comparison isomorphism.
 The reason we  work with  unipotent instead of de Rham periods is because
  the unipotent version of $2\pi i$ is trivial, which is not true for its de Rham version.

    \subsection{Determinant} Let us denote   $ Z^1_{\G_m}(\GdrS, \Pi^{\dr})$ simply by $Z$. It is a scheme. Its affine ring is graded by the action of $\G_m$ induced by the left action of $\G_m$ 
    on $Z$ given by left multiplication  $h\mapsto gh : \GdrS \rightarrow \GdrS$ for $ g\in \G_m$.

  \begin{thm} \label{mainthm} Let $n\geq 0$. Let  us write
 $N= \dim \Or(Z)_{2n}$   and suppose that 
  $$
 \dim \gr^W_{2n} \Or(\Pi^{\dr}) >  N \ .
$$
Let  $w_1, \ldots, w_N$ be (linearly independent) elements in $\gr^W_{2n} \Or(\Pi^{\dr})$. Then any $N$ points $x_1, \ldots, x_N \in X_S$ satisfy the equation 
 \begin{equation}
 \label{detinthm} 
  \det \big(  I^{\uu}(x_j;w_i;0)\big)_{1\leq i,j \leq N} =0 \ .
  \end{equation}
   \end{thm}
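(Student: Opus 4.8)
The plan is to deduce Theorem~\ref{mainthm} as a direct application of Proposition~\ref{propdetZ} together with the construction of the cocycle $c^x$ from \S\ref{sect: cx}--\S\ref{sectMainargument}. First I would observe that, by \S\ref{sect: cx}, every $S$-integral point $x \in X_S$ gives rise to an algebraic cocycle $c^x \in Z^1_{\alg,W}(\GdrS,\Pi^{\dr})(\Q)$ which is trivial on $\G_m$, hence defines a $\Q$-point of the scheme $Z = Z^1_{\G_m}(\GdrS,\Pi^{\dr})$. The affine ring $\Or(Z)$ is graded by $\G_m$, and the weight-$2n$ graded piece $\Or(Z)_{2n}$ is finite-dimensional (this follows from $\dim W_n\Or(\GdrS) < \infty$ and Proposition~\ref{propZ1Fscheme} applied to the weight filtration). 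Set $N = \dim\Or(Z)_{2n}$.

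Next I would restrict the universal coaction of \S\ref{sect: det} to weight $2n$. Applying the graded version of the formalism (\S\ref{sect: grad}) to the closed subscheme $A = Z \subseteq \mathrm{Hom}_{\G_m}(\Or(\Pi^{\dr}),\Or(\GdrS))$, the coaction $(\ref{coOZ})$ induces in each degree a map
$$ \co_{2n} : \gr^W_{2n}\Or(\Pi^{\dr}) \To \gr^W_{2n}\Or(\GdrS) \otimes \Or(Z)_{2n}\ . $$
Since $\dim\Or(Z)_{2n} = N$, the $(N+1)$-st exterior power $\bigwedge^{N+1}\Or(Z)_{2n}$ vanishes, so $\bigwedge^{N+1}\co_{2n} = 0$. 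Hence for any $N$ cocycles $\phi_1,\dots,\phi_N \in Z(\Q)$ and any $v_1,\dots,v_N \in \gr^W_{2n}\Or(\Pi^{\dr})$, the map $\phi_1 \wedge \cdots \wedge \phi_N : \bigwedge^N \gr^W_{2n}\Or(\Pi^{\dr}) \To \gr^W_{2n}\Or(\GdrS)$ factors through $\bigwedge^N\co_{2n}$ and therefore $\det(\phi_i(v_j)) = 0$, exactly as in Lemma~\ref{lemdeteqn} and Proposition~\ref{propdetZ}. This is just the determinant equation $(\ref{detequationZ})$ read off in weight $2n$; the hypothesis $\dim\gr^W_{2n}\Or(\Pi^{\dr}) > N$ merely guarantees that one can indeed choose $N$ linearly independent such $w_i$ (and is not strictly needed for the vanishing itself).

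Finally I would identify the quantities in $(\ref{detinthm})$ with the matrix coefficients appearing in Proposition~\ref{propdetZ}. By \S\ref{sect: cx} each point $x_j$ gives the cocycle $c^{x_j}$, and by the interpretation of \S7.5--7.6 (using $(\ref{phiwequation})$ and the identification $(\ref{cxashom})$ of $c^x$ with the homomorphism $\Or(\Pi^{\dr}) \To \Pe^{\uu}_S$, $w \mapsto I^{\uu}(x;w;0)$) we have precisely $c^{x_j}(w_i) = I^{\uu}(x_j;w_i;0)$. Substituting $\phi_j = c^{x_j}$ and $v_i = w_i$ into the determinant equation yields $(\ref{detinthm})$.

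The main obstacle here is not the linear algebra, which is formal once the framework of \S\ref{sectLinAlg}--\S\ref{sectMainargument} is in place, but rather keeping the bookkeeping of the gradings straight: one must work with the \emph{weight} grading on $\Or(Z)$ (induced by left multiplication by $\G_m$ on $\GdrS$, \emph{not} the naive grading on $\mathrm{Hom}_{\G_m}$ warned against in \S\ref{sect: grad}), check that $c^x$ really lands in the $\G_m$-trivial subscheme so that the restriction to $\UdrS$ is weight-graded, and confirm that $\co$ is compatible with these gradings so that it descends to $\co_{2n}$. Once those compatibilities are verified, the theorem is immediate.
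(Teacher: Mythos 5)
Your route is the paper's own: realise each $x_j$ as a $\Q$-point $c^{x_j}$ of $Z=Z^1_{\G_m}(\GdrS,\Pi^{\dr})$, restrict the coaction $\co$ of \S\ref{sect: det} to weight $2n$ via the grading of \S\ref{sect: grad}, kill a top exterior power of $\Or(Z)_{2n}$, and identify the matrix entries $c^{x_j}(w_i)$ with $I^{\uu}(x_j;w_i;0)$ through definition \ref{defnIuu}. All of that matches the paper, and the bookkeeping you flag (the $\G_m$-triviality of $c^x$, the compatibility of $\co$ with the weight grading) is handled correctly.

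There is, however, a genuine non-sequitur in the exterior-power step. From $\dim\Or(Z)_{2n}=N$ you correctly get $\bigwedge^{N+1}\Or(Z)_{2n}=0$ and hence $\bigwedge^{N+1}\co_{2n}=0$; but you then conclude that the $N\times N$ determinant vanishes because $\phi_1\wedge\cdots\wedge\phi_N$ ``factors through $\bigwedge^{N}\co_{2n}$''. Factoring through $\bigwedge^{N}\co_{2n}$ proves nothing, since $\bigwedge^{N}\Or(Z)_{2n}$ is one-dimensional rather than zero. What the argument of lemma \ref{lemdeteqn} and proposition \ref{propdetZ} actually delivers is the vanishing of determinants of size $\dim\Or(Z)_{2n}+1$: in both of those statements the size of the determinant is $\dim\Or(A)+1$, not $\dim\Or(A)$. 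So either the theorem's $N$ must be read as $\dim\Or(Z)_{2n}+1$ --- which is how the paper itself normalises things later, cf.\ $N(k,|S|)=c_w+1$ in $(\ref{Nkwkdefn})$ and the $2\times 2$ determinant in the first example of \S\ref{sectunitexample}, where the relevant graded piece of $\Or(Z)$ is one-dimensional --- or your argument does not establish the displayed equation, since an $N\times N$ determinant with $N=\dim\Or(Z)_{2n}$ has no reason to vanish. Your parenthetical remark that the hypothesis $\dim\gr^W_{2n}\Or(\Pi^{\dr})>N$ ``is not strictly needed for the vanishing'' is a symptom of the same off-by-one: with the correct indexing that hypothesis is exactly what guarantees the existence of $\dim\Or(Z)_{2n}+1$ linearly independent elements $w_i$, and hence that the theorem is not vacuous. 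Fix the indexing (take the determinant of size $\dim\Or(Z)_{2n}+1$, or equivalently note that the image of $\gr^W_{2n}\Or(\Pi^{\dr})$ under $\co_{2n}$ lies in $\gr^W_{2n}\Pe^{\uu}_S\otimes L$ for a subspace $L\subseteq\Or(Z)_{2n}$ of dimension at most $\dim\Or(Z)_{2n}$, so that any $\dim L+1$ of the $\phi_j$ are ``linearly dependent'') and the rest of your proof goes through verbatim.
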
 
   \begin{proof} This is the weight-graded version of proposition \ref{propdetZ}.   
  The map 
\begin{eqnarray}
\co : \Or(\Pi^{\dr})   & \To &    \Pe_S^{\uu} \otimes \Or(Z)   \nonumber \\
w & \mapsto & (c\mapsto w(c_u)) \nonumber 
\end{eqnarray} 
where $u  \mapsto w(c_u) \in \Pe_S^{\uu}$ is viewed as a morphism from $\UdrS$ to $\A^1$. 
 Since  $ \Or(\Pi^{\dr}) $, $\Pe_S^{\uu}$ are  graded by weight and have even degrees, and cocycles in $Z$ respect the gradings, the definition in 
\S\ref{sect: grad} provides   a grading $\Or(Z) = \bigoplus_n \Or(Z)_{2n}$ on $\Or(Z)$ such that  
$$
\co : \gr^W_{2n}  \Or(\Pi^{\dr})  \To  \gr^W_{2n} (\Pe_S^{\uu} )  \otimes \Or(Z)_{2n}     \  . 
$$
The grading on $\Or(Z)$ is induced then, either by the action of $\G_m$ on $\Or(\Pi^{\dr})$, or the conjugation action of $\G_m$ on $\UdrS$. These 
coincide by  $(\ref{cocycleonU})$.  The conjugation action of $\G_m$ on $\UdrS$ is equivalent to the left action of $\G_m$ on $\UdrS \rtimes \G_m = \GdrS$. 
 Every cocycle $(\ref{cxinpi})$ factorizes through this map.
By the argument of proposition   \ref{propdetZ}  we deduce that  
$$   \det  \big( w_i(c^{x_j}) \big)_{1\leq i, j \leq N} = 0\ . $$
     By definition    \ref{defnIuu},  the coefficient  of $w_i(c^{x_j})$ is precisely $I^{\uu}(x_j;w_i;0)$.
  \end{proof} 

   \subsection{Discussion}  \label{sectComments}

 \begin{enumerate} 
\item  The hypothesis  $
 \dim \gr^W_{2n} \Or(\Pi^{\dr}) >  \dim \Or(Z)_{2n}$ of the theorem provides, via remark  \ref{remuniveqn},  the existence of a non-explicit function
 $$\sum p_i  I^{\uu}(x; w_i;0) =0 $$
 for all $x \in X_S$, where $p_i \in \Pe^{\uu}_S$ and $w_i \in \Or(\Pi)$ are of weight $2n$.  Since it is not known  how to construct the 
 elements in $\Pe^{\uu}_S$ except in very special cases, this approach seems to be  impractical. An added complication is that the injectivity of the $p$-adic period homomorphism on $\Pe^{\uu}_S$ is completely open, so it is not known that the elements $p_i$  have non-vanishing $p$-adic periods. There are multiple reasons, therefore, for  preferring the explicit approach via  $(\ref{detinthm})$.
  
\item  Equation $(\ref{detinthm})$ is a  `motivic' Coleman function which vanishes on $\textstyle{\bigwedge^N} X_S$. We can retrieve a  function vanishing on $X_S$ as follows.   Suppose  there exist $N-1$ points $x_1,\ldots, x_{N-1}$ on $X_S$. Write $x_N=x$.  A row expansion of $(\ref{detinthm})$ yields 
 \begin{equation} \label{Colemaneqn} \sum_{i=1}^{N-1}   p_{w_i}  I^{\uu} (x; w_i;0)=0\ ,\  
 \end{equation} 
 which holds for all integral points $x\in X_S$. The unipotent period  $p_{w_i} \in \Pe_S^{\uu}$ is the determinant of  the minor 
 of $(  I^{\uu}(x_j;w_i;0)  )_{1\leq i, j \leq N}   $ obtained by deleting 
 row $N$ and column $i$.  The equation $(\ref{Colemaneqn})$ could be  trivial if every $p_{w_i}$ were to  vanish. 
 But in that case, the equation $p_{w_N}=0$ is equivalent to an equation
 $$  \det  \big(  I^{\uu}(x_j;w_i;0)\big)_{1\leq i, j\leq N-1} =0 $$
 which vanishes for every set of $N-1$ points $x_1,\ldots, x_{N-1} \in X_S$.  Continuing in this way, we reach the following conclusion.
 
 \begin{cor} Either $\big| X_S \big|<N$
  or every point $x\in X_S$ satisfies   an  equation of the form $(\ref{Colemaneqn})$  where not all coefficients $p_{w_i} \in \Pe^{\uu}_S$  are identically zero. 
 \end{cor}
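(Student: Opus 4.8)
The plan is to extract the Corollary from Theorem~\ref{mainthm} by an elementary rank argument; no new input is needed. If $|X_S| < N$ we are in the first alternative, so suppose $|X_S| \geq N$. For each $x \in X_S$ put $v_x = \bigl(I^{\uu}(x; w_1; 0), \ldots, I^{\uu}(x; w_N; 0)\bigr) \in (\Pe^{\uu}_S)^{N}$. Since $\Pe^{\uu}_S = \Or(\UdrS)$ is the coordinate ring of a pro-unipotent group it is a polynomial ring, hence an integral domain; let $K$ be its fraction field. By Theorem~\ref{mainthm}, for any $N$ distinct points $y_1, \ldots, y_N \in X_S$ the determinant of $\bigl(I^{\uu}(y_j; w_i; 0)\bigr)_{i,j}$ vanishes, i.e.\ $v_{y_1}, \ldots, v_{y_N}$ are linearly dependent over $K$; hence the $K$-span $V \subseteq K^{N}$ of $\{v_x : x \in X_S\}$ has dimension $r \leq N-1$. (Note that this step does not use the finiteness of $X_S$.) If $r = 0$ then every $v_x$ vanishes and the equation $I^{\uu}(x; w_1; 0) = 0$, with coefficient $1$, already gives the conclusion, so assume $r \geq 1$.

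Next I would choose $x_1, \ldots, x_r \in X_S$ so that $v_{x_1}, \ldots, v_{x_r}$ is a $K$-basis of $V$. The $N \times r$ matrix $B = \bigl(I^{\uu}(x_\ell; w_i; 0)\bigr)$ then has rank $r$ over $K$, so some $r \times r$ minor of $B$ is nonzero; fix one, on rows $i_1 < \cdots < i_r$, and choose a further index $i_0 \in \{1, \ldots, N\} \setminus \{i_1, \ldots, i_r\}$ (available since $r < N$). Now let $x \in X_S$ be arbitrary. Since $v_x$ lies in $V = \mathrm{span}_K(v_{x_1}, \ldots, v_{x_r})$, the $N \times (r+1)$ matrix with columns $v_{x_1}, \ldots, v_{x_r}, v_x$ has rank $r$, so all of its $(r+1) \times (r+1)$ minors vanish --- in particular the one on rows $i_1, \ldots, i_r, i_0$. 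Expanding that determinant along its last column produces a relation
$$\sum_{k=0}^{r} p_{w_{i_k}}\, I^{\uu}(x; w_{i_k}; 0) = 0 ,$$
valid for every $x \in X_S$ with the \emph{same} coefficients $p_{w_{i_k}} \in \Pe^{\uu}_S$; explicitly, $p_{w_{i_k}}$ is, up to sign, the $r \times r$ minor of $B$ on the rows $\{i_0, i_1, \ldots, i_r\} \setminus \{i_k\}$, so in particular $p_{w_{i_0}}$ is, up to sign, the nonzero minor fixed above. Setting the coefficients of the remaining $w_i$ to zero, this is an equation of the form $(\ref{Colemaneqn})$ --- in general involving only a subset of the $w_i$, which is what ``continuing in this way'' refers to --- whose coefficients $p_{w_i} \in \Pe^{\uu}_S$ are not all identically zero. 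This yields the second alternative.

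I do not expect a real obstacle here: Theorem~\ref{mainthm} carries all the content, and what remains is bookkeeping. The points that deserve a little care are that ``rank'' must be taken over the fraction field $K$ (equivalently, read as the size of a largest nonvanishing minor over $\Pe^{\uu}_S$); that, although $K$ is used to bound that rank, the coefficients $p_{w_i}$ of the resulting relation already lie in $\Pe^{\uu}_S$, so no denominators appear; and that the number of $w_i$ actually occurring may be smaller than $N-1$, so the conclusion is best read as ``an equation of the form $(\ref{Colemaneqn})$'' rather than as an identity with a prescribed number of terms. Finally, the relation is non-trivial by construction, since its coefficient $p_{w_{i_0}}$ is precisely the nonzero minor we selected --- this is what makes the ``minimality argument'' of the introduction work.
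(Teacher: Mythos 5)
Your proof is correct and is essentially the paper's own argument: the paper expands the determinant of Theorem \ref{mainthm} along a row and, if all the resulting minors vanish for every choice of points, descends to the $(N-1)\times(N-1)$ case and repeats, which amounts to locating a maximal nonvanishing minor and bordering it by the column of a new point $x$ --- exactly what your rank computation over the fraction field of $\Pe^{\uu}_S$ does in one step. Your packaging is cleaner (and correctly isolates the needed fact that $\Pe^{\uu}_S$ is an integral domain, which the paper's iterated-descent phrasing avoids having to invoke), but the underlying idea is the same.
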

 
 \item Let $p< \infty$ be prime not in $S$. Denote the $p$-adic period of  a motivic  unipotent de Rham motivic  iterated integral (definition \ref{defnIuu}) by
 $$I^p (x;w;0) = \per_p \big( I^{\uu}(x;w;0)\big) \ .$$
 Since $I^{\uu}(x;w;0)$ is the unipotent de Rham period $ [ \Or(\xpio), w, {}_x 1_0]^{\uu} $, we have
 $$  I^p (x;w;0)   =   {}_x 1_0 ( \overline{F}_p w)$$
 where $\overline{F}_p$ is the normalised Frobenius acting on $\Or(\xpio)\otimes_{\Q} \Q_p$.  
 It is  the single-valued $p$-adic iterated integral of $w$ from $0$ to $x$. 
  
 \begin{cor}  \label{corfinite}  Assume the above set-up, and  the conditions of theorem $\ref{mainthm}$. 
 For any set of  $N$ points $x_1, \ldots, x_N \in X_S$, we have 
   \begin{equation}    \label{detIp} \det \big(   I^{p}(x_j;w_i;0)\big)_{1\leq i,j \leq N} =0 \  .
   \end{equation}
  In particular, if $|X_S|\geq N$ then there exists a non-trivial $p$-adic analytic function on $X(\Q_p)$
 which vanishes on the image of $X_S$. 
 \end{cor}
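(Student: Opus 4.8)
The plan is to apply the $p$-adic period homomorphism to the identity of Theorem~\ref{mainthm} and then turn the resulting determinantal relation into a single $p$-adic analytic function by a cofactor expansion; the only real work is in checking that this function is not identically zero.

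\emph{The determinant identity $(\ref{detIp})$.} Equation $(\ref{detinthm})$ holds in the $\Q$-algebra $\Pe^{\uu}_S$. Since the determinant is a polynomial in its entries and $\per_p:\Pe^{\uu}_S\to k_p$ is a homomorphism of $\Q$-algebras, $\per_p$ commutes with it; using $\per_p\big(I^{\uu}(x_j;w_i;0)\big)=I^p(x_j;w_i;0)$ we obtain
\[
\det\big(I^p(x_j;w_i;0)\big)_{1\le i,j\le N}=\per_p\!\Big(\det\big(I^{\uu}(x_j;w_i;0)\big)_{1\le i,j\le N}\Big)=0
\]
for every $N$-tuple $x_1,\dots,x_N\in X_S$.

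\emph{From a determinant to a function.} For any $w$, the assignment $x\mapsto I^p(x;w;0)={}_x 1_0(\overline{F}_p\,w)$ is the single-valued $p$-adic iterated integral of $w$ from $0$ to $x$, hence a Coleman function on $X(k_p)$ --- this is where the Frobenius-invariant path of \cite{Besser} (cf.\ also \cite{Vologodsky}, and \cite{Lip} for explicit formulae) enters. Assuming $|X_S|\ge N$, fix distinct $x_1,\dots,x_{N-1}\in X_S$; a cofactor expansion of $(\ref{detIp})$ along the row of the remaining variable $x$ yields a locally analytic function
\[
F(x)=\sum_{i=1}^{N}(-1)^{N+i}\,\mu_i\,I^p(x;w_i;0),\qquad \mu_i\in k_p,
\]
which, by the previous step, vanishes on the whole of $X_S$.

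\emph{Non-triviality --- the main obstacle.} It remains to choose the data so that $F\not\equiv 0$ on $X(k_p)$. The point is that $(\ref{detIp})$ already forces the $k_p$-span of the vectors $\big(I^p(x;w_1;0),\dots,I^p(x;w_N;0)\big)$, $x\in X_S$, to have dimension $\le N-1$; so if in addition the functions $I^p(\cdot\,;w_1;0),\dots,I^p(\cdot\,;w_N;0)$ are $k_p$-linearly independent, then the space of linear forms killing all the evaluation covectors $\{\ev_x : x\in X_S\}$ on $\bigoplus_i k_p\,I^p(\cdot\,;w_i;0)$ is nonzero, and a suitable choice of fixed points $x_1,\dots,x_{N-1}$ realises a nonzero such form as the tuple $(\mu_i)$ appearing in $F$. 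Since $\dim\gr^W_{2n}\Or(\Pi^{\dr})>N$, there is room to pick the $w_i$ so that their $p$-adic iterated integrals are independent --- provided the map $w\mapsto I^p(\cdot\,;w;0)$ does not collapse $\gr^W_{2n}\Or(\Pi^{\dr})$ onto a space of dimension $<N$. Verifying this non-degeneracy is the delicate part, and it is exactly where concrete information about $\Pi$ is used: for the quotients of $\pi_1^{\dr}(\Pro^1\backslash\{0,1,\infty\})$ relevant here the single-valued $p$-adic iterated integrals include the polylogarithms $\Li_n^{p}$, which are patently linearly independent as functions (e.g.\ by inspecting their power-series expansions on a residue disk), so enough independent functions are available. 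Alternatively, as in the Corollary immediately preceding this one, one runs a downward minimality argument on the number of fixed points until the expanded determinant can no longer vanish identically on $X(k_p)$. Either way one obtains a nontrivial $p$-adic analytic function vanishing on the image of $X_S$, as required.
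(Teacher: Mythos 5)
Your strategy is the paper's own: push $(\ref{detinthm})$ through the ring homomorphism $\per_p$ to get $(\ref{detIp})$, expand along the row of the free variable, and reduce the whole corollary to the statement that the functions $x\mapsto I^p(x;w;0)$ are non-zero and, for linearly independent $w$, linearly independent as $p$-adic analytic functions. The first two steps are correct as written. The problem is the third step, which you yourself label ``the delicate part'' and then do not close in the generality in which the corollary is stated. Theorem \ref{mainthm} allows an arbitrary quotient $\Pi$ of ${}_0\Pi_0$ over a totally real field and arbitrary linearly independent $w_1,\dots,w_N\in\gr^W_{2n}\Or(\Pi^{\dr})$; your verification covers only the classical polylogarithms $\Li_n^{p}$ on $\Pro^1\backslash\{0,1,\infty\}$ (and even there ``inspecting power-series expansions on a residue disk'' is a sketch, since these are single-valued/Coleman-type functions, not a single convergent series). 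You explicitly leave open whether $w\mapsto I^p(\cdot\,;w;0)$ could collapse an $N$-dimensional space of words onto fewer than $N$ independent functions. The paper closes exactly this point with one citation: the $I^p(\cdot\,;w;0)$ are the coefficients of the solution of the $p$-adic KZ equation, and their linear independence for linearly independent $w$ follows from that differential system (\cite{Fu}, \S3.2). Without that input or an equivalent, your argument does not yet yield a \emph{non-trivial} analytic function.

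A smaller point: your ``Alternatively'' is not an alternative. The downward minimality argument of \S\ref{sectComments}(2) terminates in a base case where a single $I^p(\cdot\,;w;0)$, or a non-trivial $k_p$-linear combination of several, must be shown to be non-zero as an analytic function --- which is precisely the independence statement again. Both routes you offer are the same argument and both require the KZ-equation input; presenting the second as a fallback that avoids the non-degeneracy issue is misleading.
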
 
 \begin{proof} By  the same minimality argument as $(2)$,  it is enough  to show that the single-valued $p$-adic iterated integral 
 $x\mapsto I^p(x;w; 0)$ is a non-zero $p$-adic analytic function. In fact,  they are linearly independent for linearly independent $w$,  
 which  follows from the differential equation ($p$-adic KZ equation) they satisfy (\cite{Fu}, \S3.2).
 \end{proof}
\item Under the conditions of the  previous corollary,  the set  $|X_S|$ is finite when $X$ is a curve. Even in this case, the conditions for the motivic fundamental groupoid of $X$ to be mixed Tate are highly restrictive: for example $X = \Pro^1 \backslash \Sigma$ where $\Sigma$ is a finite non-empty set of points unramified outside $S$.  
 Note also that $(\ref{detIp})$  does not depend on $S$, but only its cardinality $|S|$.
 \end{enumerate}

\section{Dimension formulae}
We now  compute some formulae for the dimensions  of the weight-graded  pieces of $\Or(\Pi)$ and $\Or(Z)$ in theorem \ref{mainthm}   via two different methods.

\subsection{Upper bound on the space of cocycles} \label{sectUBSpace} The first method is via the Hopf algebra interpretation \S\ref{sect3views} (2) of cocycles. 
Let $\GdrS, \Pi^{\dr}$ be as in theorem \ref{mainthm}, and let   $c \in Z=Z_{\G_m}^1(\GdrS, \Pi^{\dr})$.  It defines an algebra homomorphism
$$c: \Or(\Pi) \To \Or(\UdrS) = \Pe^{\uu}_S$$
which respects the weight gradings. Since  $c$ is multiplicative, it is determined by its value on a choice of representatives $Q \Or(\Pi^{\dr})$ of indecomposable elements in $\Or(\Pi^{\dr})$. 
 Let $\Delta' : \Pe^{\uu}_S \rightarrow \Pe^{\uu}_S \otimes_{\Q} \Pe^{\uu}_S$ denote the reduced coproduct $\Delta' = \Delta - \id \otimes 1 - 1 \otimes \id$, where $\Delta$ is dual to the multiplication in $\UdrS$. The space of primitive elements in $\Pe^{\uu}_S$ is
 $$\mathrm{Prim} \, \Pe^{\uu}_S = \ker \,  \Delta'\ . $$
 Let $w\in \gr^W_n \Or(\Pi^{\dr})$.  The value of $\Delta' c(w)$ is completely determined, via  the cocycle condition,  by  $c(w')$ for $w' $ of weight $< n$.  The indeterminacy of $c(w)$ is therefore given by an element of  $\gr^W_n\mathrm{Prim} \, \Pe^{\uu}_S$. Equivalently, for two such homomorphisms  $c_1, c_2$ which agree 
 on $W_{n-1} \Or(\Pi^{\dr})$ and satisfy $\Delta' c_1(w) = \Delta' c_2(w)$,  the difference $c_1-c_2$, restricted to $\gr^W_n \Or(\Pi^{\dr})$, defines an element of  
 $$ \mathrm{Hom} (\gr^W_n Q \Or(\Pi^{\dr}), \gr^W_n \mathrm{Prim} \, \Pe^{\uu}_S)$$
 where $\mathrm{Hom}$ denotes linear maps  of vector spaces.   Since the affine ring of the scheme of homomorphisms is the symmetric algebra on its dual \S \ref{sectcoactSchemeA}, it follows that $\Or(Z)$ has at most 
 $$\Big(\dim  \gr^W_n Q \Or(\Pi^{\dr}) \Big) \times \Big( \dim  \gr^W_n \mathrm{Prim} \, \Pe^{\uu}_S\Big) $$
 algebra generators in degree $n$.

 \subsection{Poincar\'e series}   Since all objects are graded with weights only in even degrees, we shall hereafter divide the weights by two.
 \begin{lem} Let $\ell_m = \dim_{\Q}  \, \gr^W_{2m} Q \Or(\Pi^{\dr}) $ be the dimension of the space  of algebra generators of $\Or(\Pi^{\dr})$ in weight $2m$. Then  
\begin{equation} \label{genseries1}
 \dim_{\Q}\,  \gr^W_{2n} \Or(\Pi^{\dr}) =  \hbox{coeff. of } t^n \hbox{ in } 
 \quad   \prod_{m \geq 1} {1 \over (1-t^m)^{\ell_m}}\ .
 \end{equation}
 \end{lem}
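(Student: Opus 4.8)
The plan is to identify $\Or(\Pi^{\dr})$ with a polynomial (symmetric) algebra on a graded space of representatives of its indecomposables, and then simply read off the Poincar\'e series.

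First I would record the structural properties of $\Or(\Pi^{\dr})$. Since $\Pi$ is a pro-object of $\MT(\Or_S)$, its realisation $\Or(\Pi^{\dr})$ is an Ind-object of the category of mixed Tate motives; in particular the weight filtration $W$ is canonically split, so $\Or(\Pi^{\dr})$ is a graded $\Q$-algebra with $\gr^W_0\Or(\Pi^{\dr})\cong\Q$, with all weights even (Tate twists), and with $\dim_\Q\gr^W_{2n}\Or(\Pi^{\dr})<\infty$ for every $n$ (each graded piece underlies a finite-dimensional subquotient motive). Because $\Pi^{\dr}$ is pro-unipotent, being a quotient of the pro-unipotent group ${}_0\Pi_0^{\omega}$, the ring $\Or(\Pi^{\dr})$ is moreover a connected commutative Hopf algebra, and its coproduct respects $W$ by \S\ref{sectWeightFilt}. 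Dividing all weights by two as announced, we obtain a connected graded commutative Hopf algebra over $\Q$ with finite-dimensional graded components, concentrated in non-negative degrees.

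Next I would invoke the structure theorem for connected graded commutative Hopf algebras over a field of characteristic zero (Leray, Milnor--Moore): such a Hopf algebra is free as a commutative algebra, and is isomorphic, as a graded algebra, to the symmetric algebra $\mathrm{Sym}(Q)$ on any graded subspace $Q\subset\Or(\Pi^{\dr})_+$ mapping isomorphically onto the indecomposables $Q\Or(\Pi^{\dr})=\Or(\Pi^{\dr})_+/\Or(\Pi^{\dr})_+^2$. The concentration in even original weights means there are no sign subtleties, so graded-commutative coincides with commutative here. By construction $\dim_\Q\gr^W_{2m}Q=\ell_m$ in the halved grading.

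Finally I would compute the Poincar\'e series. Writing $Q=\bigoplus_{m\geq 1}Q_m$ with $\dim_\Q Q_m=\ell_m$, one has $\mathrm{Sym}(Q)\cong\bigotimes_{m\geq 1}\mathrm{Sym}(Q_m)$ as graded vector spaces, and $\mathrm{Sym}(Q_m)$ is a polynomial ring in $\ell_m$ variables each of degree $m$, with Poincar\'e series $(1-t^m)^{-\ell_m}$. Since the Poincar\'e series of a graded tensor product is the product of the Poincar\'e series of the factors (the finite-dimensionality of each graded piece guarantees that this product is a well-defined formal power series), we obtain $\sum_n\big(\dim_\Q\gr^W_{2n}\Or(\Pi^{\dr})\big)\,t^n=\prod_{m\geq 1}(1-t^m)^{-\ell_m}$, which is $(\ref{genseries1})$. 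The only real point to be careful about is the second step: one must make sure the structure theorem genuinely applies, i.e. that $\Or(\Pi^{\dr})$ is connected and commutative with grading in non-negative degrees --- all of which follow from $\Pi^{\dr}$ being pro-unipotent and mixed Tate --- after which freeness is classical and the remainder is a routine generating-function identity.
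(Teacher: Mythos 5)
Your proof is correct and follows the same route as the paper: the paper's entire argument is that $\Or(\Pi^{\dr})$ is a commutative graded Hopf algebra, hence by Milnor--Moore a free polynomial algebra on its indecomposables, from which the generating series is immediate. Your write-up merely spells out the hypotheses (connectedness, even weights, finite-dimensional graded pieces) and the routine Poincar\'e-series computation in more detail.
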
 
 
 \begin{proof} The ring $\Or(\Pi^{\dr})$ is  a commutative graded Hopf algebra. By the Milnor-Moore theorem it is isomorphic to the graded  polynomial algebra on its generators. 
 \end{proof} 
 
 For all $n\geq 1$ we have
 $$\gr^W_{2n} \mathrm{Prim} \, \Pe^{\uu}_S  \cong \gr^W_{2n} H^1(\UdrS, \Q) \cong   \mathrm{Ext}^1_{\MT(\Or_S)} (\Q(0), \Q(-n)) \ . $$
 by, for example,  \cite{NotesMot}  \S6. 
 The dimensions of the right-hand vector space are given by Borel's theorem $(\ref{Borelrank})$.  Therefore $\gr^W_{2}   \mathrm{Prim} \, \Pe^{\uu}_S = |S|$. 
  For $m\geq 2$,  write
 $$r(m)  =   \gr^W_{2m} \mathrm{Prim} \, \Pe^{\uu}_S  = \begin{cases} r_1+r_2 \,\,\, \hbox{ if } m \hbox{ odd} \\ r_2  \qquad \,\,\, \hbox{ if }  m \hbox{ even} \ ,  \end{cases} $$
 where $r_1$ (resp. $r_2$) denotes the number of real (resp. complex) embeddings of $k$.

 \subsection{}  By the discussion of \S \ref{sectUBSpace}, we deduce that $\Or(Z)$ has at most 
 $r(m) \ell_m$
 algebra generators in weight $2m$, and therefore 
\begin{equation} \label{genseries2}
 \dim  \Or(Z)_{2n}  \leq \hbox{coeff. of } t^n \hbox{ in } 
 \quad  { 1\over (1-t)^{|S|\ell_1}}  \prod_{m\geq 2}  {1 \over (1-t^m)^{ r(m) \ell_{m} }} \ ,
\end{equation}
It turns out that this inequality  $(\ref{genseries2})$ is in fact an equality, essentially because $\UdrS$ has no $H^2$. 
In any case, a sufficient condition for theorem $\ref{mainthm}$ to apply is if, for some $n$,
$$ \Big( \hbox{coeff. of } t^n \hbox{ in } 
 \quad   \prod_{m \geq 1} {1 \over (1-t^m)^{\ell_m}}  \Big) \quad \geq \quad \Big( \hbox{coeff. of } t^n \hbox{ in } 
 \quad  { 1\over (1-t)^{|S|\ell_1}}  \prod_{m\geq 2}  {1 \over (1-t^m)^{ r(m) \ell_{m} }} \Big)$$
This clearly cannot happen if all $r(m)\geq 1$. Therefore  we must assume that 
 $r_2=0$, i.e., that $k$ is totally real.  When the previous inequality is satisfied, theorem $\ref{mainthm}$ can be applied. The quantity $N$ is the integer in the right-hand side of the inequality.

\subsection{} Nowhere have we assumed that the scheme $Z^1_{\G_m}(\GdrS, \Pi^{\dr})$, nor for that matter, $\Pi^{\dr}$  is finite dimensional (it is not in general). The argument works without this assumption. 
In the finite-dimensional case, we can apply the following lemma.

\begin{lem} \label{lemPS}  Consider a power series  with $M$ factors:
$$F(t) = \prod_{i=1}^{M} {1 \over (1-t^{\alpha_i})}  \in \Q[[t]]$$
where the $\alpha_i$ are strictly positive integers. Then the coefficients of $t^n$ in $F$ grow  polynomially 
in $n$ of degree $M-1$. \end{lem}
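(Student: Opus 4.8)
The plan is to treat $F(t)$ as a proper rational function and read off the asymptotics of its Taylor coefficients from its partial fraction expansion. Since each $\alpha_i\ge 1$, the numerator $1$ has degree strictly smaller than the denominator $\prod_{i=1}^M(1-t^{\alpha_i})$, so there is no polynomial part and every pole of $F$ is a root of unity. Factoring $1-t^{\alpha_i}=\prod_{\zeta^{\alpha_i}=1}(1-\zeta t)$, the point $t=1$ is a zero of the denominator of multiplicity exactly $M$ (once in each of the $M$ factors), while any other root of unity $\omega$ occurs with multiplicity $m_\omega=\#\{i:\omega^{\alpha_i}=1\}\le M$.

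First I would write the partial fraction expansion
$$F(t)=\sum_{\omega}\ \sum_{k=1}^{m_\omega}\frac{c_{\omega,k}}{(1-\omega^{-1}t)^{k}},\qquad m_1=M,$$
where $\omega$ runs over the poles. The leading coefficient at $t=1$ is
$$c_{1,M}=\lim_{t\to1}(1-t)^M F(t)=\prod_{i=1}^M\frac{1}{\alpha_i}>0,$$
using $1-t^{\alpha_i}=(1-t)(1+t+\dots+t^{\alpha_i-1})$ and evaluating the second factor at $t=1$. Then, extracting coefficients via $[t^n](1-\omega^{-1}t)^{-k}=\binom{n+k-1}{k-1}\omega^{-n}$, one gets
$$a_n:=[t^n]F(t)=\sum_\omega \omega^{-n}Q_\omega(n),\qquad \deg Q_\omega=m_\omega-1\le M-1,$$
with $Q_1$ of degree exactly $M-1$ and leading coefficient $c_{1,M}/(M-1)!>0$. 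Hence $a_n$ is a quasi-polynomial in $n$ of degree $M-1$; in particular $a_n=O(n^{M-1})$, and $a_n\ge c\,n^{M-1}$ for a fixed $c>0$ along the progression $n\equiv 0\ (\mathrm{mod}\ g)$, where $g=\gcd(\alpha_1,\dots,\alpha_M)$ — exactly the residues on which all the maximal-order poles contribute with matching phase. This is the precise sense of "grows polynomially of degree $M-1$"; when $g=1$ the pole at $t=1$ is the unique one of order $M$ and one has the clean asymptotic $a_n\sim n^{M-1}/((M-1)!\,\alpha_1\cdots\alpha_M)$.

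I expect the only real subtlety to be the bookkeeping for poles other than $t=1$: when $g>1$ there are further poles of order $M$ (the primitive $g$-th roots of unity), so $a_n$ is genuinely only a quasi-polynomial — e.g. for $\alpha_i\equiv 2$ one has $F=(1-t^2)^{-M}$ with $a_n=0$ on odd $n$ — and the statement must be read accordingly. If one only needs the upper bound $a_n=O(n^{M-1})$ (which is all that is required to compare the two Poincaré series of the previous subsection), there is a softer argument avoiding partial fractions: $a_n$ counts tuples $(m_1,\dots,m_M)\in\Z_{\ge 0}^M$ with $\sum\alpha_i m_i=n$; fixing $m_1,\dots,m_{M-1}$ determines $m_M$ and each $m_i\le n$, so $a_n\le(n+1)^{M-1}$. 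The partial fraction route, however, delivers the matching lower bound and the exact leading term in one stroke, so I would present that as the main proof.
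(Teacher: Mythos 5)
Your argument is correct, but it is a genuinely different (and heavier) route than the one in the paper. The paper avoids partial fractions entirely and argues by termwise comparison of power series with non-negative coefficients: the upper bound comes from $(1-t^{\alpha_i})^{-1}\preceq (1-t)^{-1}$ coefficientwise, so $[t^n]F \le \binom{n+M-1}{M-1}$, and the lower bound from $(1-t^{\alpha_i})^{-1}\succeq (1-t^{\alpha})^{-1}$ with $\alpha=\mathrm{lcm}(\alpha_1,\dots,\alpha_M)$, giving $[t^{\alpha n}]F\ge \binom{n+M-1}{M-1}$. That two-line argument establishes exactly the (necessarily quasi-polynomial) sense of the lemma that the application needs, at the cost of only exhibiting the lower bound along multiples of the lcm. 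Your partial-fraction expansion buys more: the exact leading asymptotic $a_n\sim n^{M-1}/((M-1)!\,\alpha_1\cdots\alpha_M)$ when $\gcd(\alpha_i)=1$, and a lower bound along the denser progression $g\mid n$ with $g=\gcd(\alpha_i)$, which is the sharp statement. One small point you assert rather than prove: for the lower bound on $g\mid n$ you need the leading coefficients of the order-$M$ poles not to cancel; this is fine because a direct computation shows $c_{\omega,M}=\prod_i \alpha_i^{-1}$ for \emph{every} $g$-th root of unity $\omega$ (each factor contributes $\lim_{t\to\omega}(1-\omega^{-1}t)/(1-t^{\alpha_i})=1/\alpha_i$), so the leading terms add constructively when $\omega^{-n}=1$; alternatively, note $F(t)=G(t^g)$ and reduce to the coprime case. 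Your lattice-point count $a_n\le (n+1)^{M-1}$ is essentially the paper's upper bound in combinatorial clothing. Either proof is acceptable; the paper's is the minimal one for the intended use in comparing the Poincar\'e series $(\ref{genseries1})$ and $(\ref{genseries2})$.
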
  
\begin{proof} The coefficients of $F(t)$ are bounded above by those of 
$${1 \over (1-t)^{M}} = \sum_{n\geq 1} \binom{n+M-1}{M-1} t^n $$
since the  geometric series  expansion of $(1-t^{\alpha_i})^{-1}$ has non-negative coefficients which are termwise bounded above by those of 
$(1-t)^{-1} = 1+ t+ t^2+ \ldots .$  A lower bound for the coefficients of $F(t)$
is likewise  given  by an expansion of $ (1-t^{\alpha})^{-M}$, where $\alpha$ is the least common multiple of $\alpha_1,\ldots, \alpha_M$.  By the previous formula, the coefficient of $t^{\alpha n}$ in this series is $ \binom{n+M-1}{M-1} $, which is again polynomial in $n$ of degree $M-1$. 
\end{proof}

Suppose that there exists $w>0$ such that
\begin{equation} \label{assumelm} \ell_m  = 0 \quad \hbox{ for all } \quad  m> w\ .
\end{equation} 
This happens if we replace $\Pi$ with the quotient $\Pi/ W_{-2w-1} \Pi$, whose affine ring  is the Hopf subalgebra of $\Or(\Pi)$ generated by 
$W_w \Or(\Pi)$. In fact, $(\ref{assumelm})$ is equivalent to $W_{-2w-1} \Pi =1$.   With this assumption,   the generating series $(\ref{genseries1})$ and $(\ref{genseries2})$ have finitely many factors and we can apply the lemma to deduce the following

\begin{cor}Suppose $k$ is totally real with $r_1$ real embeddings. A 
sufficient condition for some coefficient $t^n$ in $(\ref{genseries1})$ to exceed that of $(\ref{genseries2})$ is 
\begin{equation} \label{ellineq} \ell_1 + \ldots + \ell_w > |S| \ell_1 + r_1 (\ell_3 + \ell_5 +  \ldots + \ell_r) 
\end{equation} 
where $r $ is the largest odd integer with  $3\leq r \leq w$. In this situation,  \ref{mainthm} applies.
 \end{cor}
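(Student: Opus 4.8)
The plan is to reduce the statement to a comparison of polynomial growth rates, using Lemma \ref{lemPS}. First I would unwind the hypothesis that $k$ is totally real: this forces $r_2=0$, so $r(m)=r_1$ for $m$ odd and $r(m)=0$ for $m$ even, whence $(\ref{genseries2})$ is the finite product $(1-t)^{-|S|\ell_1}\prod_{m\ge 3\ \mathrm{odd}}(1-t^m)^{-r_1\ell_m}$ — finite because assumption $(\ref{assumelm})$ is in force, i.e.\ $\ell_m=0$ for $m>w$. Counting factors with multiplicity, $(\ref{genseries1})$ has $L:=\sum_{m\ge1}\ell_m=\ell_1+\cdots+\ell_w$ factors and $(\ref{genseries2})$ has $M:=|S|\ell_1+r_1\sum_{m\ge 3\ \mathrm{odd}}\ell_m$ factors; since $r$ is the largest odd integer in $[3,w]$ and $\ell_m=0$ beyond $w$, the inequality $(\ref{ellineq})$ is literally the statement $L>M$, which I will assume.

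Next I would extract from the proof of Lemma \ref{lemPS} the two bounds it actually supplies. Write $a_n$, $b_n$ for the coefficients of $t^n$ in $(\ref{genseries1})$, $(\ref{genseries2})$ respectively. The upper-bound half of that proof, applied to $(\ref{genseries2})$, gives $b_n\le\binom{n+M-1}{M-1}$ for all $n$, a polynomial in $n$ of degree $M-1$. The lower-bound half, applied to $(\ref{genseries1})$ with $\alpha$ the least common multiple of those $m$ for which $\ell_m\ne 0$, gives $a_{\alpha q}\ge\binom{q+L-1}{L-1}$ for all $q\ge 0$, a polynomial in $q$ of degree $L-1$. Since $L-1>M-1$, the difference $a_{\alpha q}-b_{\alpha q}$ tends to $+\infty$ with $q$; in particular there is an $n$ (in fact all sufficiently large multiples of $\alpha$) with $a_n>b_n$.

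Finally I would feed this back into Theorem \ref{mainthm}. By the equality $(\ref{genseries1})$ we have $\dim\gr^W_{2n}\Or(\Pi^{\dr})=a_n$, and by $(\ref{genseries2})$ we have $\dim\Or(Z)_{2n}\le b_n$; hence $\dim\gr^W_{2n}\Or(\Pi^{\dr})=a_n>b_n\ge\dim\Or(Z)_{2n}$, which is exactly the hypothesis of Theorem \ref{mainthm} in weight $2n$, so its conclusion holds. The only point needing a little care — and the closest thing to an obstacle — is that the coefficients $a_n$ of $(\ref{genseries1})$ need not be positive in every degree, only along the progression of multiples of $\alpha$, so the comparison must be made along that progression rather than for all $n$; this is already built into the proof of Lemma \ref{lemPS}, so no new work is needed. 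One should also remember that $(\ref{genseries2})$ gives only an upper bound for $\dim\Or(Z)_{2n}$, but this is in the favourable direction.
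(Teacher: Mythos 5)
Your proposal is correct and follows exactly the route the paper intends: it unwinds the totally real hypothesis to count the factors of the two products, identifies $(\ref{ellineq})$ with the statement that $(\ref{genseries1})$ has strictly more factors than $(\ref{genseries2})$, and then uses the explicit upper and lower bounds from the proof of Lemma \ref{lemPS} to conclude that the coefficients of $(\ref{genseries1})$ eventually dominate, so the hypothesis of Theorem \ref{mainthm} holds in some weight. Your added care about comparing along multiples of $\alpha$ and about $(\ref{genseries2})$ being only an upper bound for $\dim\Or(Z)_{2n}$ is exactly the right bookkeeping, which the paper leaves implicit.
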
 
This is the  theorem of \cite{Hadian}.
 The assumption $(\ref{assumelm})$ implies that  $\Or(\Pi^{\dr})$ and  $\Or(Z)$ have finite transcendence degree and so $\Pi$ and $Z$ are both  finite-dimensional with 
\begin{eqnarray}  \label{Z1dimineq}
 \dim \Pi  &= & \ell_1 + \ldots + \ell_w \ ,  \nonumber \\
   \dim  Z^1_{\G_m}(\GdrS,  \Pi)  &\leq & |S| \ell_1 +r_1(\ell_3  + \ell_5 + \ldots+ \ell_r)   \ .
 \end{eqnarray} 
The latter inequality is in fact an equality. We will show below that  $(\ref{ellineq})$ is implied by the inequality  $\dim \Pi > \dim Z^1_{\G_m}(\GdrS,  \Pi)$. We emphasize, however, that our approach does not require finite-dimensionality of either scheme.

 \subsection{Dimensions via homological algebra}\label{sectskipped}  We give a second approach to computing dimensions  using  standard methods of homological algebra. The   argument is formally similar to \cite{KimAlb}.

 \begin{lem}  \label{lemZ1toH1} The natural transformation of functors
$$Z^1_{\G_m}(\GdrS, \Pi) \rightarrow H^1_{\G_m}(\GdrS, \Pi)  $$
is bijective, where $H^1_{\G_m}(\GdrS,\Pi) = \ker \big( H_{\alg}^1(\GdrS,\Pi) \rightarrow H_{\alg}^1(\G_m,\Pi) \big)$ and $H^1_{\alg}$  is the functor from commutative unitary $k$-algebras to pointed sets whose $R$-points are equivalence classes of elements in $Z^1_{\G_m}(\GdrS, \Pi)$ modulo boundaries.  
\end{lem}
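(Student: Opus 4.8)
The plan is to show the natural transformation $Z^1_{\G_m}(\GdrS,\Pi)\to H^1_{\G_m}(\GdrS,\Pi)$ is bijective on $R$-points for every commutative unitary $k$-algebra $R$, which amounts to showing that the coboundary relation is trivial, i.e. that every element of $Z^1_{\G_m}(\GdrS\times R,\Pi\times R)(R)$ equivalent to the trivial cocycle is \emph{equal} to the trivial cocycle. Since $H^1_{\G_m}$ is defined as the cokernel of the coboundary action, surjectivity is automatic; the content is injectivity, which is precisely the statement that the stabilizer of any cocycle under the coboundary action is all of $\Pi(R)$, equivalently that $b\,c_g\,g(b^{-1})=c_g$ for all $g$ forces no constraint beyond what is automatic --- no, rather, I want: if $c$ and the trivial cocycle differ by a coboundary then they coincide. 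So the key point is: suppose $b\in\Pi(R)$ satisfies $b\cdot 1\cdot g(b^{-1})=1$ for all $g\in\GdrS(R)$, or more to the point, suppose $c'_g = b^{-1} c_g g(b)$ with $c,c'$ both in $Z^1_{\G_m}$; I must show $c=c'$.

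First I would reduce to the trivial cocycle. Given $c\in Z^1_{\G_m}(\GdrS,\Pi)(R)$, translating by $c$ (via the coboundary action, which is an isomorphism of the functor by \S\ref{sectCocycAffineGpSch}) lets me assume the relevant equivalence is with the trivial cocycle $c_g\equiv 1$. So it suffices to prove: if $b\in\Pi(R)$ is such that $g\mapsto b^{-1}\,g(b)$ lies in $Z^1_{\G_m}(\GdrS,\Pi)(R)$ --- i.e. is a cocycle trivial on $\G_m$ --- then $b^{-1}g(b)=1$ for all $g$. The hypothesis that the coboundary cocycle $g\mapsto b^{-1}g(b)$ is trivial on $\G_m\leq\GdrS$ means $b^{-1}h(b)=1$, i.e. $h(b)=b$, for all $h\in\G_m(R)$. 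Thus $b$ is fixed by the $\G_m$-action on $\Pi$. But the $\G_m$-action on $\Pi=\Pi^{\dr}$ is the weight grading: $\Or(\Pi^{\dr})$ is graded with $W_{-1}=0$ and $\gr^W_0\Or(\Pi^{\dr})=k$ (from \S\ref{sectunip}, Hodge and weight filtrations, using that $X$ is connected). A $\G_m$-fixed point of $\Pi$ is an algebra homomorphism $\Or(\Pi^{\dr})\to R$ killing all elements of positive weight, hence factoring through $\gr^W_0\Or(\Pi^{\dr})=k$; that is the identity element of $\Pi(R)$. Therefore $b=1$ and the coboundary is trivial, giving injectivity.

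The main obstacle I anticipate is making precise the step ``$b$ fixed by $\G_m$ implies $b=1$'' in the pro-unipotent, graded setting, and in particular verifying that the $\G_m$-action genuinely implements the weight grading on $\Or(\Pi^{\dr})$ with the positivity property $W_{-1}=0$ --- this is where one uses that $\Pi$ is a quotient of ${}_0\Pi_0^{\omega}$ in $\MT(\Or_S)$ so that its affine ring carries the weight grading from \S\ref{sectWeightFilt}, concentrated in non-negative (in fact even, after the halving convention) degrees with $\gr^W_0=k$. Once that is in hand the argument is purely formal: fixed points of a positively-graded pro-unipotent group under its defining $\G_m$ reduce to the identity. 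One should also check the reduction at the start is legitimate functorially in $R$ --- this follows since the coboundary action $\Pi\times Z^1_{\alg}\to Z^1_{\alg}$ is a natural isomorphism of functors, so it identifies fibers of $Z^1\to H^1$ with $\G_m$-fixed points of $\Pi$ compatibly with base change, and the computation above shows that fixed-point functor is trivial.
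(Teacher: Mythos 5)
Your injectivity argument is essentially the paper's: both of you reduce to showing that an element $b\in\Pi(R)$ fixed by all of $\G_m(R)$ must be the identity, and both deduce this from the positivity of the weight grading ($W_{-1}=0$, $\gr^W_0\Or(\Pi^{\dr})=k$) together with the fact that $\Q^{\times}\subset R^{\times}$ acts non-trivially on every non-zero graded piece. The only cosmetic difference is that the paper passes to $\log(b)\in\Lie\Pi(R)$ and shows the $\G_m(R)$-invariant Lie algebra vanishes, whereas you argue directly that a $\G_m$-fixed algebra homomorphism $\Or(\Pi^{\dr})\to R$ kills everything of positive weight and hence is the counit; both work, and the step you flag as the ``main obstacle'' does go through (for $f$ homogeneous of weight $n>0$ one gets $(\lambda^{n}-1)\,b(f)=0$ for all $\lambda\in R^{\times}$, and $2^{n}-1$ is invertible in $R$). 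Your preliminary ``translate by $c$'' reduction is garbled --- the coboundary action translates by an element of $\Pi(R)$, not by a cocycle --- but it is also unnecessary: restricting $c'_g=b^{-1}c_g\,g(b)$ to $g\in\G_m(R)$, where $c_g=c'_g=1$, gives $g(b)=b$ directly, which is all you use.

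The one real omission is surjectivity. The definition of $H^1_{\alg}$ in the statement contains an evident typo ($Z^1_{\G_m}$ where $Z^1_{\alg}$ is meant): as the paper's proof makes clear, $H^1_{\alg}(\GdrS,\Pi)(R)$ consists of classes of \emph{arbitrary} algebraic cocycles, and $H^1_{\G_m}$ is the set of classes whose restriction to $\G_m$ is merely a coboundary, not literally trivial. Surjectivity of $Z^1_{\G_m}\to H^1_{\G_m}$ is therefore not automatic: one must produce, for each such class, a representative that is actually trivial on $\G_m$. The paper does this by writing $a_h=c\,h(c^{-1})$ for $h\in\G_m(R)$ and replacing $a_g$ by $g\mapsto c^{-1}a_g\,g(c)$. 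The step is easy, but it is half of the paper's proof, and your claim that ``surjectivity is automatic'' is valid only under the literal (typo'd) wording; you should add the twisting argument to make the proof complete.
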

\begin{proof} A class  in $H_{\G_m}^1(\GdrS,\Pi)(R)$ can be  represented by 
$a: \GdrS \times R \rightarrow \Pi\times R$  which defines an algebraic cocycle. We claim that it has a unique representative whose restriction to $\G_m\times R$ is trivial.  
For the existence,  the triviality of  the class of  $a$ restricted to  $\G_m \times R $ means that  there exists
$c \in \Pi(R)$ such that 
$a_g = c g(c^{-1})$ for all $g\in \G_m\times R$. By replacing $a_g$ with  $g\mapsto c^{-1} a_g g(c)$, we can assume that $a$ is trivial on $\G_m\times R$. 
For the uniqueness,  consider another   representative $a'$ where $a'_g = b^{-1} a_g g(b)$ for some $b\in \Pi(R)$, and such that $a'$ is also  trivial on $\G_m \times R$. It follows that
$$ g(b) = b \qquad \hbox{for all } g \in \G_m (R)$$
and hence $\log(b) \in \Lie \Pi(R)$ is   $\G_m(R)$-invariant. The logarithm  exists since $\Pi$ is pro-unipotent.  But $(\Lie \Pi(R))^{\G_m(R)}$ is a  quotient of $(\Lie \opo(R))^{\G_m(R)}$. Since $\G_m(R)=R^{\times}$ contains  $\Q^{\times}$ and  the latter acts non-trivially on all Tate objects $\Q(n)_{dR}$ for all $n<0$  it follows that  $(\Lie \opo(R))^{\G_m(R)}$ is a quotient of  $\gr^W_0 \Lie \opo(R) = 0$.  Therefore  $\log(b)$ vanishes, and hence $b$ is the unit element.   
\end{proof}

In the proof we showed  that   $\Pi^{\dr}(R)$ has trivial $\G_m(R)$-invariants for all $R$.

Let $L^1 \Pi = \Pi$ and $L^{n+1} \Pi = [ \Pi , L^n \Pi]$ for  $n\geq 1$ be the lower central series of $\Pi$. Set $\Pi_n = \Pi/L^{n+1} \Pi$ and consider the short exact sequence
of affine group schemes 
$$ 1 \To \gr^n_{LCS} \Pi \To \Pi_{n} \To \Pi_{n-1} \To 1\ .$$
 Taking non-abelian cohomology with respect to  $G=$ $G^{\omega}_S$ or $\G_m$ leads to a sequence
 $$  \Pi_{n-1}^{G} \To H^1(G, \gr^n_{LCS} \Pi) \To H^1(G, \Pi_{n}) \To H^1 (G, \Pi_{n-1}) \To H^2(G, \gr^n_{LCS} \Pi)\ ,$$
which, on taking points, is a long exact sequence of non-abelian cohomology sets.  
We  showed that $\Pi_{n-1}^{G}$ is trivial since $G$ contains $\G_m$  and $\Pi^{\G_m}=1$.
Since $\gr^n_{LCS} \Pi$ is abelian,  and since ordinary group cohomology can be computed using cocycles,
$$H^i(G,  \gr^n_{LCS} \Pi) = \mathrm{Ext}^i_{G-\hbox{mod}} (\Q,  \gr^n_{LCS} \Pi)\ .$$ 
The latter is trivial for $i \geq 2$ if  $G= G^{\omega}_S$  by \S \ref{sectStructure}, and vanishes for $i\geq 1$ if $G= \G_m$. We deduce by induction on $n$
and the fact that $\Pi_0=1$ that $H^1(\G_m, \Pi_n)=0$ for all $n$ and hence $H^1(\G_m, \Pi)$ is trivial. 
It follows that $H^1_{\G_m}(G^{\omega}_S, \Pi) = H^1(G^{\omega}_S, \Pi)$.  By lemma  \ref{lemZ1toH1}, the former defines a representable functor and hence a scheme. 

 Therefore we obtain the sequence
$$0 \To \mathrm{Ext}_{\MT_S}^1(\Q,  \gr^n_{LCS} \Pi) \To H^1(G^{\omega}_S, \Pi_{n}) \To H^1(G^{\omega}_S, \Pi_{n-1}) \To  H^2(G, \gr^n_{LCS} \Pi) \ .
$$
This means that the fibers of the morphism $H^1(G^{\omega}_S, \Pi_{n}) \rightarrow H^1(G^{\omega}_S, \Pi_{n-1})$  of schemes are principle $\mathrm{Ext}_{\MT_S}^1(\Q,  \gr^n_{LCS} \Pi)$-spaces. It follows from properties of the dimension  of schemes that 
$$\dim H^1(G^{\omega}_S, \Pi_{n}) \leq \dim  H^1(G^{\omega}_S, \Pi_{n-1})+ \dim \mathrm{Ext}_{\MT_S}^1(\Q,  \gr^n_{LCS} \Pi)\ .$$
This  inequality does not use the  surjectivity in the previous sequence, and therefore applies in the more general situation of \S \ref{sectfinal}. 
It is in  fact an equality in the present example since the final term is trivial.
Assume that $\Pi = \Pi/W_{m+1}$ for some $m$. In particular, the lower central series terminates.  It follows by induction that 
$$\mathrm{dim}\,  H^1(G^{\omega}_S, \Pi) \leq \sum_{n \geq 0} \dim \mathrm{Ext}^1_{\MT_S}(\Q, \gr^n_{LCS} \Pi)\ .$$
If we write $\gr_{LCS} \Pi \cong  \bigoplus_w \Q(-w)^{\ell_w}$  (the lower central series filtration coincides with the weight-filtration)  then by lemma \ref{lemZ1toH1} we deduce that
$$\mathrm{dim}\,  Z_{\G_m}^1(G^{\omega}_S, \Pi) \leq\sum_n \ell_n \dim \big(\mathrm{Ext}^1_{\MT_S}(\Q(0), \Q(-n))\big)$$
which gives $(\ref{Z1dimineq})$ by   $(\ref{Borelrank})$.

 \section{Application: the unit equation} \label{sectunitexample}
  Let $k=\Q$, $X= \Pro^1 \backslash \{0,1,\infty\}$, $\Or_S = \Z[{1\over S}]$ where  $S$ is a  finite set of rational primes,  and let $0$ be the tangential base-point $1$ at $0$.   The set of integral points $X_S$ are the solutions to the equation 
  $ u + v= 1$
  where $u, v $ are $S$-integral. We shall also consider certain tangential basepoints on $X_S$ as solutions to this equation.
  For background on the motivic fundamental group, see \cite{DeP1}, \cite{DeGo}, \cite{BrICM}.
 \subsection{Unit equation}  
 First let $\Pi= {}_0 \Pi_0$  be the entire fundamental group.  Since $k=\Q$, the canonical fiber functor  $\omega$ coincides with the de Rham fiber functor  $\omega_{dR}$. Then 
 $\Or( \Pi^{\dr}) = T^c( \Q e_0 \oplus  \Q e_1 ) $
 is the graded vector space spanned by words in two letters $e_0, e_1$.  It is equipped with the shuffle product $\sha$ and deconcatenation coproduct whose action on generators is given by the formula
 $$\Delta^{\mathrm{dec}} (e_{i_1} \ldots e_{i_n}) = \sum_{k=0}^n e_{i_1} \ldots e_{i_k} \otimes e_{i_{k+1}} \ldots e_{i_n}  \ . $$
 The augmentation  $\varepsilon$ is projection onto the empty word.
  An element of $\Pi^{\dr}(R)$ is an invertible   formal power series in non-commuting variables $R\langle \langle e_0, e_1 \rangle \rangle$ which is group-like with respect to the completed coproduct for which $e_0,e_1$ are primitive.

  Any point  $x\in X_S$ defines an algebraic cocycle $c^x \in \Pi(\Pe^{\uu}_S)$. It is a homomorphism $\Or(\Pi^{\dr}) \rightarrow \Pe^{\uu}_S$ of graded algebras and is  given by the generating series 
 $$ L^{\uu}(x) = \sum_{w}  w \, \Li^{\uu}_w(x)$$
 where we use the notation 
 $$\Li^{\uu}_w(x) := I^{\uu} (x; w; 0)$$
  for  the unipotent iterated integral from $0$ to $x$ of the word  $w$, where 
  $e_0$ stands for   ${dz \over z}$ and $e_1$ for ${dz \over 1-z}$.   In particular, if $-1_1$ is the tangent vector $-1$ at $1$, then 
 $L^{\uu}(-1_1) = \sum_w w \zeta^{\uu}(w)$ is the unipotent version of the motivic Drinfeld associator \cite{BrICM}, (2.10).

One has $\dim \gr^W_{2n} \Or(\Pi) = 2^n$. An explicit  set of algebra generators for $\Or(\Pi)$ are given by the set
of Lyndon words in $e_0,e_1$, with respect to the ordering $e_0<e_1$. By Witt's inversion formula, any set of algebra generators in weight $w$ number
$$\ell_w = {1 \over w}  \sum_{d|w} \mu\big({w\over d}\big) 2^d$$
where $\mu$ is the M\"obius function.  In particular, $\ell_1=2$, since $e_0$ and $e_1$ are Lyndon words.  Since $r_1=1$, equation $(\ref{ellineq})$ is equivalent to the inequality
$$\ell_2 + \ldots+\ell_{2k} > 2 |S|-2 $$ which is easily satisfied for all $k$ sufficiently large, since $\ell_w \sim 2^w$. 
The conditions of theorem \ref{mainthm} are satisfied for some $\Pi/W_{-k-1} \Pi$ and it  follows that $X_S$ is finite.   
 
 \subsection{Depth one quotient} \label{sectDepthonequot}
  Rather than considering all iterated integrals,
 it is enough only to consider the unipotent versions of the classical polylogarithms:
 \begin{eqnarray} \log^{\uu}(x)  &=  &  \Li^{\uu}_{e_0} (x) = I^{\uu}(x;e_0;0)   \\
  \Li_n(x) & = &  \Li^{\uu}_{e_1e_0^{n-1} } (x) = I^{\uu}(x;e_1\underbrace{e_0\ldots e_0}_{n-1};0)   \nonumber 
  \end{eqnarray}
  Denote their images under $\per_p$ with a superscript $p$ instead of $\uu$. 
 Therefore let $\Pi$ be the length $2k$,  depth 1  quotient of ${}_0 \Pi_0$. Its affine ring 
 $\Or(\Pi^{\dr}) \subset \Or( {}_0 \Pi^{\dr}_{0})$
 is the Hopf subalgebra generated by words of degree at most $1$ in the letter $e_1$ and of length $\leq 2k$.  The latter are  stable under deconcatenation. Its indecomposable elements with respect to the shuffle product are given by the $2k+1$ words 
 $$\{ e_0 \  , \   e_1  \ , \ e_1e_0 \ , \  \ldots \ , \  e_1 e_0^{2k-1}\} \  . $$
  These are Lyndon words for the ordering $e_0<e_1$.  Its Poincar\'e series is thus $(\ref{genseries1})$
 $$ \sum_{ n \geq 0 } d_n t^n   =   {1 \over (1-t)^2 (1-t^2) (1-t^3) \ldots (1-t^{2k}) }$$
 where 
 $d_n = \dim_{\Q} \gr^W_{2n}\Or(\Pi^{\dr})$, and  $\Or(Z)$ Poincar\'e
 series $(\ref{genseries2})$
 $$ \sum_{ n \geq 0 } c_n  t^n  =  {1 \over (1-t)^{2 |S|} (1-t^3) (1-t^5) \ldots (1-t^{2k-1}) }$$
 where $c_n =   \dim \Or(Z)_{2n}$. 
 Therefore by applying lemma \ref{lemPS} a necessary and sufficient condition for some $d_n$ to exceed $c_n$ is  $2k+1 > 2|S|+k-1$, i.e., 
\begin{equation} \label{kSinequality}   k > 2 (|S|-1)\ . \end{equation}
 Therefore, for   any $|S|$, and  integer $k$ satisfying $(\ref{kSinequality})$,  or possibly infinite, let $w$ be minimal such that $d_w>c_w$. Define  
 \begin{equation}\label{Nkwkdefn}  N(k,|S|) = c_w+1 \qquad \hbox{ and } \qquad w(k,|S|) = w \ .
 \end{equation}
 Theorem \ref{mainthm} implies the following corollary.
  \begin{cor} Let $k > 2 (|S|-1)$ as above, and write $N= N(k,|S|)$. Then for all $N$-tuples $x_1,\ldots, x_N \in X_S$,  the equation 
  $$\det (P_{j}(x_i)) =0 $$
  holds,  where $P_{1},\ldots, P_{N}$ denotes any set of distinct monomials of the form
 \begin{equation} \label{logLi}  (\log^{\uu}(x))^{r_0}   \prod_{i=1}^k \big( \Li_i^{\uu}(x) \big)^{r_i} 
 \end{equation} 
where $r_i \geq 0$ are integers, and the total weight is $w(k,|S|)$. This last condition is equivalent to  $r_0 +r_1 +2r_2 +\ldots + kr_k = w(k, |S|)$.

 Let $p\notin S$ be a finite prime. Then  in particular
 $$\det (\per_p P_{j}(x_i)) =0\ . $$
 \end{cor}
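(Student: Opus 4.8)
The plan is to specialise Theorem~\ref{mainthm} to the length-$2k$, depth-one quotient $\Pi$ of $\opio$ of \S\ref{sectDepthonequot}, and then to rewrite the resulting determinant in the explicit polylogarithmic coordinates on $\Or(\Pi^{\dr})$. Recall from \S\ref{sectDepthonequot} that, by the Milnor--Moore theorem, $\Or(\Pi^{\dr})$ is the graded polynomial algebra on the $2k+1$ shuffle-indecomposable words $e_0,\ e_1,\ e_1e_0,\ \ldots,\ e_1e_0^{2k-1}$, of weights $1,1,2,\ldots,2k$, and that for $x\in X_S$ the associated cocycle $c^x$, viewed via $(\ref{cxashom})$ as a weight-graded algebra homomorphism $c^x:\Or(\Pi^{\dr})\to\Pe^{\uu}_S$, satisfies $c^x(e_0)=\log^{\uu}(x)$ and $c^x(e_1e_0^{n-1})=\Li^{\uu}_n(x)=I^{\uu}(x;e_1e_0^{n-1};0)$ for $1\le n\le 2k$ (Definition~\ref{defnIuu}). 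Being multiplicative and grading-preserving, $c^x$ carries a monomial in these generators of weight $w$ to the corresponding monomial in $\log^{\uu}(x),\Li^{\uu}_1(x),\Li^{\uu}_2(x),\ldots$ of total weight $w$.

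First I would fix $w=w(k,|S|)$, and $N=N(k,|S|)=c_w+1$ with $c_w=\dim\Or(Z)_{2w}$, as in $(\ref{Nkwkdefn})$, where $Z=Z^1_{\G_m}(\GdrS,\Pi^{\dr})$. The hypothesis $k>2(|S|-1)$, via Lemma~\ref{lemPS} applied to the two Poincar\'e series of \S\ref{sectDepthonequot} (the inequality $2k+1>2|S|+k-1$), guarantees that such a least $w$ exists and that $d_w:=\dim\gr^W_{2w}\Or(\Pi^{\dr})\ge N$. One then checks that $w\le k$, so that no weight-$w$ monomial in the generators can involve $e_1e_0^{n-1}$ with $n>k$; consequently the weight-$w$ monomials in $\log^{\uu}(x),\Li^{\uu}_1(x),\ldots,\Li^{\uu}_k(x)$ --- namely the $(\log^{\uu}(x))^{r_0}\prod_{i=1}^k(\Li^{\uu}_i(x))^{r_i}$ with $r_0+r_1+2r_2+\cdots+kr_k=w$ --- are precisely the images under $c^x$ of the monomial basis of $\gr^W_{2w}\Or(\Pi^{\dr})$, of which there are $d_w\ge N$.

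Now take any $N$ distinct monomials $P_1,\ldots,P_N$ of this shape; they are the values $P_j(x)=c^x(w_j)=I^{\uu}(x;w_j;0)$ of $N$ distinct, hence linearly independent, monomial basis elements $w_1,\ldots,w_N\in\gr^W_{2w}\Or(\Pi^{\dr})$, for every $x\in X_S$. Applying the weight-$2w$ graded determinant equation of Theorem~\ref{mainthm} (equivalently, the graded form of Lemma~\ref{lemdeteqn} and Proposition~\ref{propdetZ}) to $Z$ and $w_1,\ldots,w_N$ gives
\[ \det\big(P_j(x_i)\big)_{1\le i,j\le N}\;=\;\det\big(I^{\uu}(x_i;w_j;0)\big)_{1\le i,j\le N}\;=\;0 \]
in $\Pe^{\uu}_S$, for all $x_1,\ldots,x_N\in X_S$. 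Finally, for a finite prime $p\notin S$, the $p$-adic period map $\per_p:\Pe^{\uu}_S\to\Q_p$ is a ring homomorphism, so, exactly as in the proof of Corollary~\ref{corfinite}, $\det\big(\per_p P_j(x_i)\big)=\per_p\big(\det(P_j(x_i))\big)=0$, where $\per_p P_j$ is the monomial in $\log^p(x),\Li^p_1(x),\ldots,\Li^p_k(x)$ obtained from $P_j$ by replacing each superscript $\uu$ by $p$.

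Almost all the content is already in Theorem~\ref{mainthm}: Borel's rank theorem, through \S\ref{sectUBSpace}, bounds $\dim\Or(Z)_{2w}$, and the exterior-power vanishing of \S\ref{sect: det} supplies the determinant identity; the present corollary is a translation of it into polylogarithmic coordinates, and the $\per_p$-step is formal. The one step needing genuine care, and the one I expect to be the main (if modest) obstacle, is the combinatorial claim that the prescribed monomials of weight $w(k,|S|)$ span $\gr^W_{2w}\Or(\Pi^{\dr})$: this reduces to showing $w(k,|S|)\le k$ in the range under consideration, so that the higher generators $e_1e_0^{n-1}$ with $k<n\le 2k$ (equivalently $\Li^{\uu}_n$ with $n>k$) do not intervene in that weight.
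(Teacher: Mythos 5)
Your argument is correct and is essentially the paper's own route: the corollary is presented there as a direct specialisation of Theorem \ref{mainthm} (with $N=c_w+1$ as in Proposition \ref{propdetZ} and $(\ref{Nkwkdefn})$) to the depth-one, length-$2k$ quotient, rewritten in the polylogarithmic coordinates $c^x(e_0)=\log^{\uu}(x)$, $c^x(e_1e_0^{n-1})=\Li^{\uu}_n(x)$, followed by the formal application of the ring homomorphism $\per_p$. The one point to push back on is the step you single out as the main obstacle: the inequality $w(k,|S|)\le k$ is neither needed nor always true (for instance $|S|=1$, $k=1$ satisfies $k>2(|S|-1)$ but gives $w=2>k$). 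The exterior-power argument of \S\ref{sect: det} makes $\det\big(\phi_i(v_j)\big)$ vanish for \emph{arbitrary} elements $v_1,\ldots,v_N$ of $\gr^W_{2w}\Or(\Pi^{\dr})$ once $N>\dim\Or(Z)_{2w}$; neither linear independence nor spanning of the $v_j$ enters into the vanishing. So all that is required is that each $P_j$ be the value under $c^x$ of \emph{some} element of $\gr^W_{2w}\Or(\Pi^{\dr})$, which is automatic because the generators $e_0$ and $e_1e_0^{i-1}$ for $i\le k\le 2k$ all lie in the chosen quotient and a monomial in them has the stated weight. Whether $w\le k$ only governs how many monomials of the restricted form exist, i.e.\ whether the corollary has content for the given $(k,|S|)$, not whether the determinant identity holds for those that do exist.
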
 
 The corollary states that in order to detect integral points $X_S$, one is only required to compute the $p$-adic polylogarithms  $\log^p(x), \Li_1^p(x), \ldots, \Li_{2k}^p(x)$.  Unfortunately, it seems  that  taking the single-valued period at the infinite prime gives the zero equation, and hence no information about integral points. This is  because
 the single-valued versions of $\Li_{n}(x)$, for $n$ even, vanish  on $X(\R)$. 
  
  \subsection{Coproduct}
  There is an explicit formula for the coaction on unipotent iterated integrals in this case which is equivalent to a formula due to Goncharov \cite{GG}. It can be deduced directly by dualising \cite{BrICM} an older formula of Ihara's for the action of the action of the absolute Galois group on the $l$-adic completion of the fundamental group of the projective line minus three points. 
  
   Let  $x\in X_S$, and let  $\Delta: \Or(\xpio)  \rightarrow  \Or(\xpio) \otimes_{\Q} \Pe^{\uu}_S$ denote the right coaction dual to the left  action of $\UdrS$ 
  on $\xpio$. Let us denote by  $D = (\id \otimes \pi ) \Delta$, where $\pi$ denotes the  projection  modulo products:
  $$ \pi: \Pe^{\uu}_S \To   { (\Pe^{\uu}_S)_{>0}  \over  ( \Pe^{\uu}_S)^2_{>0}}\ .$$
  The `infinitesimal' coaction  is obtained via  the following formula.
  \begin{thm} 
    For any $a_1,\ldots, a_{N+1} \in \{0,1\}$, where $a_{N+1}=0$,  we have 
  \begin{multline}  \label{Deltafull} D \, I^{\uu}( x; a_{1}\ldots a_N ;a_{N+1} )  =  \\
  \sum_{1\leq i<j \leq N+1}  I^{\uu}(x;  a_1 a_2\ldots a_i  a_{j} \ldots a_{N-1} a_N ; a_{N+1} ) \otimes \pi ( I^{\uu}(a_i; a_{i+1} \ldots a_{j-1};  a_j))  \\
    + \sum_{1\leq j \leq N+1}  I^{\uu}(x;  a_j a_{j+1} \ldots a_{N-1}a_N;   0) \otimes \pi ( I^{\uu}(x;  a_1 a_2\ldots a_{j-1};   a_{j}))
  \end{multline}
where the right-hand term in the second (resp. third)  line is suitably interpreted \cite{BrICM}, \S2  as a unipotent period of $\Or({}_a \Pi^{\dr}_b)$  (resp. $\Or({}_x \Pi^{\dr}_b)$) where $a, b \in \{\tone_0, -\tone_1\}$ 

    \end{thm}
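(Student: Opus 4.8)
The plan is to deduce the formula $(\ref{Deltafull})$ for the infinitesimal coaction $D=(\id\otimes\pi)\Delta$ from the \emph{full} coaction $\Delta$ by discarding the decomposable part. First I would invoke the explicit formula for the right coaction $\Delta\colon\Or(\xpio)\To\Or(\xpio)\otimes_\Q\Pe^{\uu}_S$ dual to the left (Ihara) action of $\UdrS$ on the torsor $\xpio$. Writing $a_0=x$ and $a_{N+1}=0$, with $a_1,\ldots,a_N\in\{0,1\}$, it reads
$$\Delta\,I^{\uu}(a_0;a_1,\ldots,a_N;a_{N+1})=\sum_{0=i_0<i_1<\ldots<i_k<i_{k+1}=N+1}I^{\uu}(a_0;a_{i_1},\ldots,a_{i_k};a_{N+1})\otimes\prod_{p=0}^{k}I^{\uu}(a_{i_p};a_{i_p+1},\ldots,a_{i_{p+1}-1};a_{i_{p+1}}),$$
where the sub-integrals $I^{\uu}(a_{i_p};\ldots;a_{i_{p+1}})$ whose endpoints lie in $\{0,1\}$ are interpreted, after shuffle-regularisation, as unipotent periods of $\Or({}_a\Pi^{\dr}_b)$ with $a,b\in\{\tone_0,-\tone_1\}$. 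This is Goncharov's coproduct formula \cite{GG}; it is obtained by dualising Ihara's formula for the action of the motivic (equivalently $\ell$-adic) Galois group on the de Rham (equivalently pro-$\ell$) completion of $\pi_1(\Pro^1\setminus\{0,1,\infty\})$, exactly as in \cite{BrICM}, \S2. Equivalently, and this is all that is strictly needed for the infinitesimal statement, one may read off $D$ directly as the dual of the infinitesimal Ihara action of $\mathrm{Lie}\,\UdrS$, which factors through the special derivations $e_0\mapsto 0$, $e_1\mapsto[e_1,\eta]$ of the free Lie algebra on $e_0,e_1$.

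Next I would apply $\id\otimes\pi$. Since $\pi$ kills the unit and every product of two elements of positive weight, a summand survives precisely when the product $\prod_{p=0}^{k}I^{\uu}(a_{i_p};\ldots;a_{i_{p+1}})$ has exactly one factor of positive weight, that is, when exactly one of the gaps satisfies $i_{p+1}-i_p\geq 2$ while all the others satisfy $i_{p+1}=i_p+1$ (so the corresponding sub-integrals equal the trivial period $1$). Such an index set is then determined by the pair $(i,j):=(i_{p_0},i_{p_0+1})$ of endpoints of the unique nontrivial gap, with $0\leq i<j\leq N+1$, and $\{i_1,\ldots,i_k\}$ is forced to be $\{1,\ldots,i\}\cup\{j,\ldots,N\}$; consequently the first tensor factor becomes $I^{\uu}(x;a_1,\ldots,a_i,a_j,\ldots,a_N;0)$ and the second becomes $\pi\,I^{\uu}(a_i;a_{i+1},\ldots,a_{j-1};a_j)$. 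Splitting the resulting sum according to whether $i\geq 1$ (the first line of $(\ref{Deltafull})$, where both endpoints $a_i,a_j\in\{0,1\}$ become tangential base points $a,b\in\{\tone_0,-\tone_1\}$) or $i=0$ (where $a_0=x$, and, using $a_{N+1}=0$, the surviving first factor is $I^{\uu}(x;a_j,\ldots,a_N;0)$, giving the second line), and recording the degenerate cases $j=N+1$ and $(i,j)=(0,N+1)$ as the boundary terms, reproduces $(\ref{Deltafull})$ exactly.

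The one genuinely nonformal ingredient is already contained in the full coaction formula above: the correct interpretation of the sub-integrals $I^{\uu}(a_i;\ldots;a_j)$ when an endpoint lies in $\{0,1\}$. Here one must pass to the tangential base points $\tone_0$ at $0$ and $-\tone_1$ at $1$, employ the shuffle-regularisation of iterated integrals, and verify that both the Ihara action and the relevant comparison isomorphisms respect these choices; this is the bookkeeping of \cite{BrICM}, \S2 and \cite{DeGo}, \S5. Granting that input, the passage from $\Delta$ to $D$ is the purely combinatorial projection just described, so the theorem is in essence a corollary of Goncharov's formula. I expect the only real (and modest) effort to lie in fixing conventions and signs carefully enough that the tangential-base-point terms match; everything else is formal.
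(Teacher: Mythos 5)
Your proposal is correct and follows essentially the same route as the paper, which likewise offers no independent proof but attributes the formula to Goncharov's coproduct, obtained by dualising Ihara's formula for the Galois action on the fundamental groupoid (with the tangential-base-point and shuffle-regularisation conventions of \cite{BrICM}, \S2). Your extra step of extracting $D=(\id\otimes\pi)\Delta$ from the full coaction --- keeping exactly the index sets with a single nontrivial gap, and separating the cases $i\geq 1$ and $i=0$ to produce the two sums --- is a correct and routine refinement of what the paper asserts.
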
 
    
    The terms in the right-hand side of the tensor in the  second line of $(\ref{Deltafull})$ are unipotent versions of motivic multiple zeta values (denoted by $\zeta^{\mathfrak{a}}$ in \cite{BrDec}).   
     
  \begin{cor}
  For all  $x\in X_S$,   
  $$D \,\Li^{\uu}_N(x) =  \Li^{\uu}_{N-1}(x) \otimes   \pi ( \log^{\uu}(x) ) + 1 \otimes   \pi ( \Li^{\uu}_{N}(x)  )  \ .$$ 
  \end{cor}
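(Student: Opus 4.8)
The plan is to read the corollary off the preceding theorem by specialising formula $(\ref{Deltafull})$ to the word that defines $\Li^{\uu}_N(x)$. Write $\Li^{\uu}_N(x)=I^{\uu}(x;a_1\cdots a_N;0)$, where exactly one of the letters equals $e_1$ and the other $N-1$ equal $e_0$; in the ordering conventions of $(\ref{Deltafull})$ it is $a_N$ that equals $e_1$, so $a_1=\cdots=a_{N-1}=e_0$, $a_N=e_1$, $a_{N+1}=0$, and the hypothesis $a_{N+1}=0$ of the theorem is met. Recall also $\log^{\uu}(x)=I^{\uu}(x;e_0;0)$. I would substitute into $(\ref{Deltafull})$ and show that, after applying $\pi$ to the right-hand tensor factor, all but two of the summands vanish.

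First I would dispose of the internal contributions, i.e.\ the second line of $(\ref{Deltafull})$. Since the unique letter $e_1$ sits in position $N$, every sub-word $a_{i+1}\cdots a_{j-1}$ occurring there is either empty, or a string of $e_0$'s (when $j\le N$), or of the form $e_0^m e_1$ (only when $j=N+1$). In the first case the sub-integral is the constant $1$, annihilated by $\pi$. In the second case, by the shuffle product $I^{\uu}(a_i;e_0^m;a_j)$ is a power of $I^{\uu}(a_i;e_0;a_j)=\log^{\uu}(a_i)-\log^{\uu}(a_j)$, which is zero because the unipotent logarithm vanishes at all the tangential base points occurring, $\log^{\uu}(\tone_0)=\log^{\uu}(-\tone_1)=0$. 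In the third case $j=N+1$ and both endpoints of the inner integral are the base point $\tone_0$, so $I^{\uu}(\tone_0;e_0^m e_1;\tone_0)=\varepsilon(e_0^m e_1)=0$ since the canonical path is the identity. Hence the second line contributes nothing to $D\,\Li^{\uu}_N(x)$.

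It then remains to evaluate the third line $\sum_{j=1}^{N+1} I^{\uu}(x;a_j\cdots a_N;0)\otimes\pi\big(I^{\uu}(x;a_1\cdots a_{j-1};a_j)\big)$. For $j=1$ the inner factor is $\pi(1)=0$ (empty word). For $j=N+1$ the outer factor is $I^{\uu}(x;\text{(empty)};0)=1$ and the inner factor is $\pi(I^{\uu}(x;a_1\cdots a_N;0))=\pi(\Li^{\uu}_N(x))$, producing the term $1\otimes\pi(\Li^{\uu}_N(x))$. For $2\le j\le N$ the inner word $a_1\cdots a_{j-1}=e_0^{\,j-1}$ is a string of $e_0$'s, so the inner factor equals $\pi\big((\log^{\uu}x)^{j-1}/(j-1)!\big)$; this is zero for $j\ge3$ because it is decomposable, and equals $\pi(\log^{\uu}x)$ for $j=2$, in which case the outer word $a_2\cdots a_N=e_0^{N-2}e_1$ is the word of $\Li_{N-1}$, so the outer factor is $\Li^{\uu}_{N-1}(x)$ and the term is $\Li^{\uu}_{N-1}(x)\otimes\pi(\log^{\uu}x)$. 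Adding the two survivors gives the identity claimed. The only delicate point — and the step I would be most careful about — is this last vanishing bookkeeping: one has to line up the letter ordering inside $I^{\uu}(x;-;0)$, the reading of the letters $a_i$ as tangential base points $\tone_0,-\tone_1$ in the inner iterated integrals, and the regularisation conventions ($\log^{\uu}$ of a tangential base point at $0$ or $1$ vanishes, and $I^{\uu}(\tone_0;w;\tone_0)=\varepsilon(w)$), so that every unwanted summand is genuinely zero rather than merely decomposable of the wrong degree. This is routine but convention-sensitive; there is no conceptual obstacle, since $(\ref{Deltafull})$ already encodes all the content (alternatively one could cite Goncharov's formula for the coproduct of the depth-one polylogarithm directly).
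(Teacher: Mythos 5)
Your proof is correct and follows essentially the same route as the paper: specialise $(\ref{Deltafull})$ to the word with a single $e_1$, kill every term of the second line via the standard vanishing of $I^{\uu}(a_i;\cdots;a_j)$ when $a_i=a_j$ or the middle letters are all $e_0$, and use the shuffle-power argument to see that only two terms of the third line survive $\pi$. Your bookkeeping of which indices survive ($j=2$ and $j=N+1$, rather than the paper's stated ``$j=1,2$'') is the internally consistent reading of the formula as printed, so no issues.
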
 
 \begin{proof}
  Set $a_1= a_2=\ldots =a_{N-1}=0$ and $a_N=1$ in $(\ref{Deltafull})$.  Use  the fact that  
   $  I^{\uu}(a_i; a_{i+1} \ldots a_{j-1};  a_j)$ vanishes whenever $a_i=a_j$,  or all $a_{i+1} = \ldots = a_{j-1}=0$ to deduce that all terms in the second line of $(\ref{Deltafull})$ vanish. 
 By the shuffle product formula, $I^{\uu}(x; 0;a_j)$ is a power of $I^{\uu}(x;0;a_j)$, so  only the terms $j=1, 2$ in the third line of  $(\ref{Deltafull})$ survive.
  \end{proof} 
 
 By exponentiating $D-1 \otimes \pi$, we retrieve the formula for the full coaction:
 \begin{equation} \label{DeltaLiuu} \Delta \,\Li^{\uu}_n(x) = 1 \otimes \Li^{\uu}_n(x) + \sum_{i=0}^{n-1} \Li^{\uu}_{n-i}(x) \otimes {1 \over i!} \big(\!\log^{\uu}(x)\big)^i
 \end{equation}  
 where $\Li^{\uu}_0(x):=1$.   It is valid for all $x \in X(\Q)$.  This can also be proved by direct methods, for example using  $(2.3)$ of \cite{NotesMot},  without passing via the previous theorem. 

  \subsection{Depth one grading} \label{SectDepthonegrading}
 A peculiarity of the depth one quotient of $\Or({}_x \Pi_0^{\omega})$ is that the  associated depth-graded  is preserved by the action of
 $\UdrS$. More precisely, the coaction $(\ref{DeltaLiuu})$  restricted to the affine ring of the depth one quotient  of $\xpio$   happens to factor, by  $(\ref{DeltaLiuu})$, through the deconcatenation coproduct: 
  $$\Delta^{\mathrm{dec}} e_1 e_0^{n-1}  =  1\otimes e_1 e_0^{n-1} + \sum_{i=0}^{n-1} e_1e_0^{n-1-i} \otimes e_0^i \ . $$
  This can also be proved directly on noting that  the depth one quotient of $\xpio$ is a $\Pi^{\dr}$ torsor,  where 
  $\Pi^{\dr}$  was defined in \S \ref{sectDepthonequot}, 
 and the fact that $\UdrS$ acts trivially on $\Pi^{\dr}$. 
   Since the left-hand factors are  all of $D$-degree one (the $D$-degree is the degree in the letter $e_1$), it follows that the depth grading is preserved by $\UdrS$ in this situation.    A glance at  $(\ref{Deltafull})$ shows that this is absolutely not true in general.

Since $e_0$ and the $e_1e_0^n$ generate $\Or(\Pi)$ as an algebra, we can therefore grade the affine ring of the depth one quotient $\Or(\Pi)$ by $D$-degree and, by \S\ref{sect: grad}, $\Or(Z)$ too.

The (weight, $D$-degree)-bigraded version of the generating series $(\ref{genseries1})$ is
 \begin{equation} \label{Dseries1} 
  \sum_{ k,n \geq 0 } \dim_{\Q} \gr^W_{2n} \gr^D_k \Or(\Pi^{\dr}) s^k t^n   =   {1 \over (1-t)(1-st) (1-st^2) (1-st^3) \ldots (1-st^{2k}) } . 
  \end{equation} 
The bigraded version of  $(\ref{genseries2})$ is 
 \begin{equation} \label{Dseries2}  \sum_{ n \geq 0 }  \dim \Or(Z)_{2n,k } s^k  t^n  =  {1 \over (1-t)^{|S|} (1-st)^{|S|} (1-st^3) (1-st^5) \ldots (1-st^{2k-1}) } 
 \end{equation} 
 where $\Or(Z)_{2n,k } $ is the component of $\Or(Z)$ of weight $2n$ and $D$-degree $k$.

\subsection{Examples} 

\subsubsection{}   Take $|S|=1$, and $k=2$.  The coefficient of $st^2$ in $(\ref{Dseries2})$  is 2. Its coefficient in  $(\ref{Dseries1})$ is $1$. 
Let $N=2$. The two elements $e_1e_0$ and  $e_1 \sha e_0 = e_0e_1 +e_1e_0$ are linearly independent in $ \gr^W_4 \gr^D_1 \Or(\Pi^{\dr})$. The corresponding iterated integrals are $\Li^{\uu}_2(x)$ and $\Li^{\uu}_1(x) \log^{\uu}(x)$. 
Theorem \ref{mainthm} implies that for every pair of points $x_1,x_2 \in X_S$:
$$\det
\left(
\begin{array}{cc}
 \Li^{\uu}_2(x_1)   &  \log^{\uu}(x_1)  \Li^{\uu}_1(x_1)  \\
 \Li^{\uu}_2(x_2)   &  \log^{\uu} (x_2) \Li^{\uu}_1(x_2)    
\end{array}
\right) =0\ . $$
 If $S = \{p\}$, let $x_2$ be the tangential base point $-p$ at $\infty$. We have 
 \begin{equation} \label{Liuatpvec} \Li^{\uu}_n(x_2) = {1 \over n!} (\log^{\uu}(p))^n  \quad \hbox{ and } \quad \log^{\uu}(x_2) =\log^{\uu}(p) \ .
 \end{equation}
 Since $\log^{\uu}(p) \neq 0$, we deduce that for every $x\in X_S$ the equation 
 $$\det
\left(
\begin{array}{cc}
 \Li^{\uu}_2(x)   &  \log^{\uu}(x)  \Li^{\uu}_1(x)  \\
 {1\over 2}   & 1    
\end{array}
\right) =0 $$
 holds. Therefore for every $x\in X_S$, we deduce that
 $$\Li^{\uu}_2(x) - {1\over 2} \log^{\uu}(x) \Li_1^{\uu}(x) =0 $$
 which can be viewed as a `motivic' equation vanishing on integral points. 
  Taking the $p$-adic period, for some different $p \notin S$, gives the equation for all $x\in X_S$:
 $$\Li^{p}_2(x) - {1\over 2} \log^p(x) \Li_1^p(x) =0 $$
 This equation is due to Coleman. Note that it does not depend on $S$, so it  actually proves that $|\cup_{|S|=1} X_S|$ is finite, which is easy to verify by hand.
  \vspace{0.1in}

\subsubsection{}   Now, with $|S|=1$ still, set $k=2$. The coefficient of $st^4$ in $(\ref{Dseries2})$ (resp. $(\ref{Dseries1})$) is 2
(resp. 4). The three elements $e_1e_0e_0e_0$ and  $e_1e_0e_0 \sha e_0$ and $e_0^{\sha 3} \sha e_1$ are linearly independent in $ \gr^W_{8} \gr^D_1 \Or(\Pi^{\dr})$. Theorem \ref{mainthm} implies  that for every set of three points $x_1,x_2,x_3 \in X_S$ the following equation is satisfied: 
  $$\det
\left(
\begin{array}{ccc}
 \Li^{\uu}_4(x_1)   &  \log^{\uu}(x_1)  \Li^{\uu}_3(x_1)  &    (\log^{\uu}(x_1))^3  \Li^{\uu}_1(x_1)  \\
 \Li^{\uu}_4(x_2)   &  \log^{\uu} (x_2) \Li^{\uu}_3(x_2)  &    (\log^{\uu}(x_2))^3  \Li^{\uu}_1(x_2)  \\
 \Li^{\uu}_4(x_3)   &  \log^{\uu} (x_3) \Li^{\uu}_3(x_3)  &    (\log^{\uu}(x_3))^3  \Li^{\uu}_1(x_3)  \\
\end{array}
\right)=0 $$ Suppose that $S = \{2\}$, and let $x_3$ be the tangent vector $-2$ at infinity.  Then the point ${1\over 2} \in X_S$ and we can set $x_2 = {1\over 2}$. Again by  $(\ref{Liuatpvec})$, this gives the equation 
  $$\det
\left(
\begin{array}{ccc}
 \Li^{\uu}_4(x_1)   &  \log^{\uu}(x_1)  \Li^{\uu}_3(x_1)  &    (\log^{\uu}(x_1))^3  \Li^{\uu}_1(x_1)  \\
 \Li^{\uu}_4({1\over 2})   &  \log^{\uu} ({1\over 2} ) \Li^{\uu}_3({1\over 2})  &    (\log^{\uu}({1\over 2 } ))^3  \Li^{\uu}_1({1\over 2})  \\
 {1 \over 24}   &  {1\over 6} &    1  \\
\end{array}
\right)=0 $$
using the fact that $\log^{\uu}(2)$ is non-zero. 
Taking the $p$-adic period (replace all superscripts $\uu$ with $p$), and 
  multiplying out gives a Coleman function in weight 4 for $X_S$, which is  equivalent to the main theorem of \cite{DanCWewers}. 
  The method of that paper relied on a conjectural `exhaustion' property for motivic iterated integrals, and the conjectural non-vanishing of a $p$-adic zeta value.
  The approach taken above  is unconditional. 
  
\subsection{Remark on asymptotics} \label{sectasymptotics} Returning to the situation of \S\ref{sectDepthonequot}, let us fix $s= |S|$, and give a crude  estimate of  $N=N(s, \infty)$ as defined in $(\ref{Nkwkdefn})$ by approximating the generating series $(\ref{genseries1})$ and $(\ref{genseries2})$.  First of all,  one knows the rough asymptotics
 $$\hbox{coeff. of } t^n \hbox{ in } \quad \prod_{k\geq 0} {1 \over 1-t^{2k+1}} \quad \sim \quad {3^{3\over 4} \over 12}  {e^{{ \pi \over 3} \sqrt{3n}} \over n^{3 \over 4}}$$ 
 $$\hbox{coeff. of } t^n \hbox{ in }\quad \prod_{k\geq 1} {1 \over 1-t^{k}} \quad \sim \quad {3^{1\over 2} \over 12}  {e^{{ \pi \over 3} \sqrt{6n}} \over n}$$ 
using the more precise asymptotics of the partition function due to Hardy and Ramanujan \cite{HardyRam}. 
 The symbol $\sim$ means that the ratio of both sides tends to $1$ as $n$ goes to infinity.  The asymptotics of the corresponding series multiplied by  $(1-t)^{-m}$, 
  are  obtained from the above by  integrating:
$$\hbox{coeff. of } t^n \hbox{ in } \quad {1\over (1-t)^r} \prod_{k\geq 1} {1 \over 1-t^{2k+1}} \quad \sim \quad \Big({2 \sqrt{3n}\over  \pi}\Big)^r\, {3^{3\over 4} \over 12}  {e^{{ \pi \over 3} \sqrt{3n}} \over n^{3 \over 4}}$$

$$\hbox{coeff. of } t^n \hbox{ in } \quad {1\over 1-t} \prod_{k\geq 1} {1 \over 1-t^{k}} \quad \sim \quad {2^{1 \over 2} \over 4\pi }  {e^{{ \pi \over 3} \sqrt{6n}} \over \sqrt{n}}$$ 

\noindent 
Set $r=2s$. A short calculation  shows that  $w(s,\infty)$ is roughly of order $s^{2+2 \epsilon}$, and hence $N(s,\infty) \sim \exp(s^{1+\epsilon})$.
The condition $|X_S|<N$ in the dichotomy of corollary $\ref{corfinite}$ thus  compares unfavorably with a  theorem due to Evertse \cite{Evertse}, which  states that $|X_S| < 3 \times  7^{1+2|S|}$. It is curious that the asymptotics are fairly similar.   I do not know if considering the full fundamental group instead of  the depth one quotient would signficantly improve this estimate for $N$ or not.

\section{Virtual $S$-units}  \label{sectVirtual} Although  theorem \ref{mainthm} suffices to prove finiteness results, its defect is that one requires $N-1$ points on $X_S$ in order to construct an explicit Coleman function for the remaining points on $X_S$.  As shown in the previous paragraph, $N$ grows at least exponentially in $|S|$ and there may not exist the required number of  integral points on $X_S$ in the first place. This would seem to make an effective application of the method impractical. In this section we propose a remedy,  by introducing what could be called virtual $S$-units or virtual cocycles.

\subsection{Scheme of divisors} Let $R$ be a commutative $k$-algebra. Let us 
 denote the free $R$-module  of divisors on $X_T$  with coefficients in $R$ to be 
$$\mathrm{Div}(X_{T})(R) = \{\sum_{x\in X_T} n_x x \qquad \hbox{ where } n_x\in R  \}\ ,$$
where $T$ is a finite set of rational primes. 
\begin{prop} The functor $R\mapsto  \mathrm{Div}(X_{T})(R)$ is representable, and defines an affine vector space scheme of finite dimension over $k$, which we denote by 
$\mathrm{Div}(X_T)$.
\end{prop}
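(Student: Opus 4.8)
The plan is to reduce the statement entirely to the finiteness of the set $X_T$, which has already been established above (it goes back to Siegel \cite{Siegel} and Kim \cite{Kim1}; see also corollary \ref{corfinite}). Granting this, the proof is a direct application of the formalism of \S\ref{sectcoactSchemeA}: representing a finite-dimensional $k$-vector space as an affine scheme.

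First I would identify the functor $R \mapsto \mathrm{Div}(X_T)(R)$ with the functor $R \mapsto U \otimes_k R$ attached to the free $k$-vector space $U = \bigoplus_{x \in X_T} k\, x$ on the set $X_T$. Explicitly, an element $\sum_{x \in X_T} n_x\, x$ with $n_x \in R$ corresponds to $\sum_{x \in X_T} x \otimes n_x \in U \otimes_k R$, and this bijection is natural in $R$. This already exhibits $\mathrm{Div}(X_T)$ as a vector space scheme in the sense of \S\ref{sectcoactSchemeA}, with its addition-of-divisors and scalar-multiplication structures being the algebraic ones induced from the vector space structure of $U$.

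Next I would invoke \S\ref{sectcoactSchemeA}: the functor $R \mapsto U \otimes_k R$ is represented by the symmetric algebra $\Or(U) = \bigoplus_{n \geq 0} \mathrm{Sym}^n\, U^{\vee}$, because $\mathrm{Hom}_{k\mathrm{-alg}}(\Or(U), R) = \mathrm{Hom}(U^{\vee}, R) \cong U \otimes_k R$. Since $X_T$ is finite, $U$ has finite dimension $|X_T|$, so $\Or(U) \cong k[t_x : x \in X_T]$ is a polynomial ring in finitely many variables and $\mathrm{Div}(X_T) \cong \Spec k[t_x : x \in X_T] \cong \A^{|X_T|}$. Hence $\mathrm{Div}(X_T)$ is an affine vector space scheme over $k$ of finite dimension, as asserted.

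The only substantive ingredient is the finiteness $|X_T| < \infty$; with that in hand there is no further obstacle, and I expect no genuine difficulty beyond correctly matching the free-module functor to the construction recalled in \S\ref{sectcoactSchemeA}. Without finiteness one would still obtain a scheme (a possibly infinite-dimensional affine space, via the projective-limit construction of \S\ref{sectLinAlg}), but the ``finite dimension'' clause of the proposition genuinely uses Siegel's theorem.
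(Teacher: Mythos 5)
Your argument is correct and is essentially the paper's own proof: both reduce to Siegel's finiteness of $X_T$ and then represent the resulting finite-dimensional vector space (the paper writes it as $\mathrm{Hom}(X_T,k)$, you as the free space $\bigoplus_{x\in X_T} k\,x$, which amounts to the same thing) as an affine scheme via the symmetric algebra construction of \S\ref{sectcoactSchemeA}. Nothing further is needed.
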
 
\begin{proof} By Siegel's theorem  the set $X_T$ is finite. Therefore $\mathrm{Div}(X_{T})(R)$ is simply
$\mathrm{Hom}(X_T, k) \otimes_k R$. Since $\mathrm{Hom}(X_T, k)$ is a finite-dimensional vector space, it defines an affine scheme as in \S\ref{sectLinAlg}.
\end{proof} 

Let $\Pi$ be a quotient of the fundamental groupoid ${}_0 \Pi_0$ in the category $\MT(\Or_S)$.  Thus $\Or(\Pi^{\dr}) \subset \Or({}_0 \Pi^{\dr}_0)$ is a Hopf subalgebra stable under the action of $\GdrS$.  Choose
$$M \subset \Or(\Pi^{\dr}) $$
a  graded vector space spanned by a set of algebra generators which are homogeneous in the weight. In other words,  the natural map
$$M \overset{\sim}{\To}  Q \Or(\Pi^{\dr}) $$
is an isomorphism of vector spaces, where $Q$ denotes the  space of indecomposables $ I  / I^2$ where $I \leq \Or(\Pi^{\dr})$ is the  augmentation ideal.   For example, if $\Pi={}_0 \Pi_0$ is the full fundamental group, we may take $M$ to be the graded vector space spanned by the set of Lyndon words in $e_0,e_1$.  The group scheme $\Pi$ can be retrieved from $M$, since  $M$ determines $\Or(\Pi^{\dr})$, which in turn uniquely determines  $\Or(\Pi)$ 
by the Tannaka theorem.

For simplicity,  write
 \begin{equation} H_S(M)=  \mathrm{Hom}_{ k-\hbox{vec}} ( M , \Pe^{\uu}_{S})^{\G_m} \end{equation} 
 for  the space of $k$-linear maps from $M$ to $ \Pe^{\uu}_{S}$ which respect the  weight-gradings. 
We have established   that $W_{2n} \Pe^{\uu}_S$ is finite-dimensional for all $n$, and therefore by corollary \ref{corHomFscheme}, $H_S(M)$ is representable and hence an affine scheme. It is in fact a projective limit of vector-space schemes and therefore has a $k$-linear structure.

By the Milnor-Moore theorem, $\Or(\Pi^{\dr})$ is a polynomial algebra on $M$, so any  linear map from $M$ to an algebra extends uniquely to an algebra homomorphism on $\Or(\Pi^{\dr})$. In particular,  there is a
   natural equivalence of functors:
$$ \mathrm{Hom}_{ k-\hbox{alg}} ( \Or(\Pi^{\dr}), \Pe^{\uu}_{S})^{\G_m} \overset{\sim}{\To} H_S(M)\ ,$$
where the left-hand $\mathrm{Hom}$ denotes homomorphisms of algebras which respect the weight gradings
 and the map is restriction to $M$.
It follows that given two choices  $M, M'$ of generating spaces, there is a canonical  isomorphism of schemes 
$$ H_S(M) \overset{\sim}{\To} H_S(M')$$
by extending a linear map  to $\Or(\Pi^{\dr})$ by multiplicativity, and then restricting to $M'$.  
However, the linear structures on $H_S(M)$ and $H_S(M')$ are not equivalent. 
We can  therefore view the choice of $M$ as  endowing  the space of graded algebra homomorphisms from $\Or(\Pi^{\dr})$ to $\Pe^{\uu}_{S}$
 with a choice of $k$-linear structure.

\begin{defn} The cocycle map extends by linearity to a   natural $R$-linear transformation  of functors from commutative rings to free $R$-modules:
\begin{eqnarray} \label{DivXsToHom} 
c:  \mathrm{Div}(X_S)    &\To&    H_S(M) \\
\sum n_x x & \mapsto & \sum n_x c^x\big|_M \ . \nonumber 
 \end{eqnarray} 
By the Yoneda lemma, $(\ref{DivXsToHom})$ is a morphism of schemes.
\end{defn}
The map $(\ref{DivXsToHom})$ is given explicitly by 
$$c( \sum n_x x   ) \  = \   (w \mapsto  \sum n_x \Li^{\uu}_w(x))  \qquad \hbox{ for all } w \in M \ . $$
 Since a cocycle is determined by its action on $M$,  the space of cocycles 
$$Z_{\G_m}^1(\GdrS, \Pi^{\dr})  \To    H_S(M) $$
is a closed subscheme. The point is that, although there is no linear structure on cocycles,  there is a linear structure on $H_S(M)$.

\begin{defn} Define the \emph{scheme of virtual $M$-cocycles} for $X_S$ to be the fiber product
\begin{equation}  V^1_S(X_S, M) =       \mathrm{Div}(X_S)       \times_{H_S(M)} Z_{\G_m}^1(\GdrS, \Pi^{\dr})\ .
\end{equation} 
Its points consist of those divisors which satisfy the equations  defining a cocycle.
\end{defn}

Since the points of $X_S$ define divisors in $\mathrm{Div}(X_S)(\Q)$ and also cocycles, we have 
$$X_S   \quad \subset \quad  V^1_S(X_S,M) (\Q)\ .$$
The definition is functorial with respect to $M$ in the sense that if $M  \subset  M'$, then  the natural map
$V^1_S(X_S,M') \rightarrow V^1_S(X_S,M)$
is a closed embedding.

\subsection{Enlarging the set of primes $S$} Now let $T$ be a finite set of primes containing $S$. 
There is a natural injective map of graded Hopf algebras
$$\Pe^{\uu}_S  \To \Pe^{\uu}_T $$
since $\MT(\Or_S)$ is a full subcategory of $\MT(\Or_T)$ and hence $U^{\dr}_T\rightarrow \UdrS$ is faithfully flat.   In particular, we deduce a morphism of affine schemes
$$H_S(M)  \To  H_T(M)$$
which is a closed embedding.  Via this embedding we can view 
$ Z_{\G_m}^1(\GdrS, \Pi^{\dr}) $ 
as a closed subscheme of  $H_T(M)$.

\begin{defn}  Define the \emph{scheme of $T$-virtual $M$-cocycles} for $X_S$ to be 
\begin{equation}  V^1_T(X_S, M) =       \mathrm{Div}(X_T)       \times_{H_T(M)}  Z_{\G_m}^1(\GdrS, \Pi^{\dr})\ .
\end{equation} 
Its points are  divisors which  define a cocycle and are unramified at primes in $T \backslash S$.
\end{defn}

The set of integral points $X_S$ satisfy $X_S \subset   \mathrm{Div}(X_T) (\Q)$ and therefore
\begin{equation} \label{XSinV1} X_S \quad \subset \quad  V^1_T(X_S,M)(\Q)\ .
\end{equation}
This inclusion can be strict for small $M$, as the examples below will show, but 
 we claim  that for any $T$, one has 
$X_S =  \varprojlim_M V^1_T(X_S,M) (\Q)$.

The scheme $V^1_T(X_S, M )$ is functorial  in  $T$ (as well as $M$).  Given $T' \supset T$ there is a natural morphism such that the following diagram commutes
$$
\begin{array}{ccc}
V^1_T(X_S,M) & \To &  V^1_{T'}(X_S,M) \\ 
 \downarrow  &   & \downarrow   \\
 Z_{\G_m}^1(\GdrS, \Pi^{\dr}) &    = &   Z_{\G_m}^1(\GdrS, \Pi^{\dr})
\end{array}\ .$$
The map along the top is a closed embedding. 
\subsection{Variants}
Taking the union over all finite sets $T$ containing $S$, we can set
$$V^1_{\infty}  (X_S,M) = \varinjlim V^1_T (X_S,M)$$
It is a functor from commutative $k$-algebras to sets. 
Elements in $V^1_{\infty}  (X_S,M)$ are simply divisors on $X$ which give rise to cocycles for $\GdrS$.

Similarly, it is convenient to consider divisors with support along a given finite set 
$D \subset X(\Q)$  of rational points. Then let $\mathrm{Div}(D)$ be the scheme whose points are 
$$\mathrm{Div}(D) (R) = \{ \sum_{x \in D}  n_x x :  n_x \in R \}\ .$$
There exists $T$ such that all points in $D$ are $T$-integral, i.e. $D \subset X_T$. So we can define
\begin{equation} V^1_D(X_S, M) =  \mathrm{Div}(D) \times_{H_T(M)} Z_{\G_m}^1(\GdrS, \Pi^{\dr})
\end{equation}
It does not depend on the choice of $T$, and 
defines a closed subscheme of $V^1_T(X_S, M)$.  Its points consists of the divisors supported on $D$ which form a cocycle for $\GdrS$.

\subsection{Coleman functions via virtual cocycles}
The point of the previous  construction is that  divisors provide a way to construct cocycles via the natural map
$$V^1_{\infty}(X_S, M)(\overline{\Q})  \To Z^1(\GdrS, \Pi^{\dr}) (\overline{\Q}) $$
This  provides the following generalisation of theorem \ref{mainthm}.
    
  \begin{thm} \label{mainThmDiv} Let $n\geq 0$ and  $k$ be a finite extension of $\Q$.  Let $\Pi$,  $M$  be as above.  Let  us write
 $N= \dim \Or(Z)_{2n}$   and suppose that 
  $$
 \dim \gr^W_{2n} \Or(\Pi^{\dr}) >  N \ .
$$
Let  $w_1, \ldots w_N$ be linearly independent elements in $\gr^W_{2n} \Or(\Pi^{\dr})$. Then any $N$ elements 
$$\xi_1, \ldots, \xi_N \quad \in \quad  V^1_{\infty}(X_S, M)(k)$$ satisfy the  following equation  with coefficients in $k$:
 \begin{equation}
 \label{detinthmDiv} 
  \det \big(  \Li^{\uu}(\xi_j)\big)_{1\leq i,j \leq N} =0 
  \end{equation}
where $\Li^{\uu}$ is extended linearly to divisors:  $\Li^{\uu}(\sum n_x x)$ is defined to be $\sum n_x \Li^{\uu}( x)$. 
   \end{thm}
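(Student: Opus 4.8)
The plan is to deduce this from the weight-graded determinant argument of Proposition~\ref{propdetZ}, in exactly the way Theorem~\ref{mainthm} was deduced from it; the only genuinely new ingredient is the observation that an element of $V^1_\infty(X_S,M)(k)$, although built out of a divisor, still furnishes a \emph{cocycle for the small Tannaka group} $\GdrS$, so that the dimension $N$ entering the hypothesis is unchanged.

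First I would unwind the definitions. By construction of the fibre products, an element $\xi_j\in V^1_\infty(X_S,M)(k)$ consists of a divisor $\delta_j=\sum_x n_{x,j}\,x\in\mathrm{Div}(X_T)(k)$ for some finite $T\supseteq S$, together with a point of $Z:=Z^1_{\G_m}(\GdrS,\Pi^{\dr})$ whose image in $H_T(M)$ equals the value $c(\delta_j)$ of the morphism~(\ref{DivXsToHom}). Thus, under the natural morphism $V^1_\infty(X_S,M)\to Z$, each $\xi_j$ maps to an algebraic cocycle $c^{\xi_j}\in Z(k)$, that is, to a graded algebra homomorphism $c^{\xi_j}\colon\Or(\Pi^{\dr})\to\Pe^{\uu}_S\otimes_{\Q}k$ whose restriction to the generating space $M$ is the linear combination $w\mapsto\sum_x n_{x,j}\,\Li^{\uu}_w(x)$. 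Since $M$ generates $\Or(\Pi^{\dr})$ and $c^{\xi_j}$ is multiplicative, this determines $c^{\xi_j}$, and one sets $\Li^{\uu}_{w_i}(\xi_j):=c^{\xi_j}(w_i)$; for $w_i\in M$ this is precisely the linearly extended $\Li^{\uu}$ of the statement. The essential point is that, no matter which auxiliary primes $T\setminus S$ the divisors involve, all the $c^{\xi_j}$ lie in the single fixed scheme $Z$, whose weight-graded pieces $\Or(Z)_{2n}$ are finite-dimensional of the dimension $N$ computed in \S\ref{sectUBSpace} (representability of $Z$ is Corollary~\ref{corHomFscheme} together with Proposition~\ref{propZ1Fscheme}).

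Next I would run the weight-graded form of Proposition~\ref{propdetZ}, exactly as in the proof of Theorem~\ref{mainthm}. Restricting the comparison coaction to degree $2n$ gives a $\Q$-linear map $\co\colon\gr^W_{2n}\Or(\Pi^{\dr})\to\gr^W_{2n}(\Pe^{\uu}_S)\otimes\Or(Z)_{2n}$, and every $c^{\xi_j}$ factors as $(\id\otimes\ev_{c^{\xi_j}})\circ\co$ with $\ev_{c^{\xi_j}}\colon\Or(Z)_{2n}\to k$. Extending scalars and passing to exterior powers as in Lemma~\ref{lemdeteqn}, this common factorization through the $N$-term data $\Or(Z)_{2n}$ forces
$$\det\big(\Li^{\uu}_{w_i}(\xi_j)\big)_{1\le i,j\le N}=0$$
in $\gr^W_{2n}(\Pe^{\uu}_S)\otimes_{\Q}k$, which is exactly $(\ref{detinthmDiv})$. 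One difference from Theorem~\ref{mainthm} is worth recording: here the cocycles are $k$-valued rather than $\Q$-valued, so Proposition~\ref{propdetZ} is applied with $k'=k$; this is precisely the flexibility anticipated in the remark following that proposition, and the reason the conclusion is an identity with coefficients in $k$.

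The step requiring the most care is the identification in the second paragraph: one must check that the point of the cocycle scheme attached to $\xi_j$ genuinely factors through the small group $\GdrS$ --- equivalently, that the linear functional $\sum_x n_{x,j}\,\ev_{c^x}$, a priori defined on the affine ring of the cocycle scheme of the larger group $G^{\dr}_T$, descends along $\Or(Z^1_{\G_m}(G^{\dr}_T,\Pi^{\dr}))\twoheadrightarrow\Or(Z^1_{\G_m}(\GdrS,\Pi^{\dr}))$. This descent is nothing other than the fibre-product condition cutting out $V^1_T(X_S,M)$, and it is exactly what keeps the relevant dimension $N$ small; a general divisor on $X_T$ would only give a cocycle for $G^{\dr}_T$, whose graded pieces are larger. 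A secondary point is that $\Li^{\uu}_{w_i}(\xi_j)$ must be understood as the value $c^{\xi_j}(w_i)$ of the associated cocycle, which agrees with the naive linear extension $\sum_x n_{x,j}\Li^{\uu}_{w_i}(x)$ precisely when $w_i$ lies in the chosen generating set $M$ (equivalently, represents a class in $\gr^W_{2n}Q\Or(\Pi^{\dr})$); this is the case in all the intended applications, e.g.\ the corollaries of \S\ref{sectMainargument} and \S\ref{SectDepthonegrading}. Everything else --- finiteness of $\dim\Or(Z)_{2n}$, strictness of $W$, and the factorization of cocycles through $\co$ --- has already been established in \S\ref{sectMainargument} and \S\ref{sectUBSpace}.
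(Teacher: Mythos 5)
Your argument is correct and is exactly the route the paper intends (the theorem is stated there without an explicit proof): an element of $V^1_\infty(X_S,M)(k)$ maps to a $k$-point of $Z^1_{\G_m}(\GdrS,\Pi^{\dr})$, and the weight-graded determinant argument of Proposition \ref{propdetZ}, run just as in Theorem \ref{mainthm} but with $k'=k$, gives the vanishing. Your closing observation that $\Li^{\uu}_{w_i}(\xi_j)$ must be read as the cocycle value $c^{\xi_j}(w_i)$ --- which coincides with the naive linear extension $\sum_x n_x \Li^{\uu}_{w_i}(x)$ only when $w_i$ lies in the chosen generating space $M$ --- is a genuine and correct clarification of a point the statement leaves implicit.
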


 This discussion motivates the following question:

\begin{question} Is  $V^1_D(X_S,M)$ Zariski-dense in $Z_{\G_m}^1(\GdrS, \Pi^{\dr})$ for sufficiently large $D$?
\end{question}

\subsection{Ramification}
The natural map $(\ref{DivXsToHom})$ sends 
$$ c:  \mathrm{Div}(X_T)    \To    H_T(M)  \ .$$
Short of demanding that $c$ define a cocycle, we can first find conditions for $c$ to be unramified  outside $S$. 
\begin{defn} Define  $\mathrm{Div}_{M, S}(X_T)$, the space of \emph{$M$-unramified virtual divisors}, by 
$$\mathrm{Div}_{M, S}(X_T) =  \mathrm{Div}(X_T)  \times_{H_T(M)} H_S(M) \ .$$
\end{defn} 

The space $\mathrm{Div}_{M, S}(X_T)$ is an affine scheme. It is  a projective limit of vector space schemes.   In particular, it has a $k$-linear structure.  It is finite dimensional if $M$ is.

 There is a variant $\mathrm{Div}_{M,S}(D)$ in which we replace $X_T$ with a subset $D \subset X_T$. Its points are  divisors supported on $D$
 which are $M$-ramified outside of $S$.

 The ramification conditions can be computed recursively. The graded Lie algebra $\mathfrak{u}_T^{\dr}$ of $\UdrS$ is a free Lie algebra and satisfies
 $$(\mathfrak{u}_T^{\dr} )^{ab} \cong \bigoplus_{n\geq 1} \mathrm{Ext}_{\MT(\Or_T)}^1(\Q(0),\Q(n))\otimes_{\Q} \Q(-n)\ .$$
 There are $|T|$ generators in degree (one half of the Hodge-theoretic weight) $1$ corresponding to every prime $p\in T$, and a generator in every odd degree $\geq 3$. 
 The former are dual to $\log^{\uu}(p)$, the latter to $\zeta^{\uu}(2n+1)$, for $n\geq 1$. Denote a choice of  lift of these generators to $\mathfrak{u}^{\dr}_T$ by 
 $\nu_p$, for $p \in T$ and $\sigma_{2n+1}$, for $n\geq 1$.  They act by derivations on $\Pe^{\uu}_T$. In fact, the elements  $\nu_p$ are uniquely determined, but the $\sigma_{2n+1}$  involve choices.  Their action factors through the operator $D$ given by the formula $(\ref{Deltafull})$, so in particular the action of $\nu_p$ can be written down in closed form.
 
 \begin{lem} \label{lemram} An element $\xi \in \Pe^{\uu}_T$  lies in the subspace $\Pe^{\uu}_S$ if and only if 
 \begin{enumerate}
 \item  $\nu_p \xi  = 0  $  \quad\qquad  for all $p \in T \backslash S.$
 \item $\nu_p \xi \in  \Pe^{\uu}_S$  \quad\qquad  for all $p \in S$.
 \item $\sigma_{2n+1} \xi \in \Pe^{\uu}_S$ \quad  for all $n \geq 1$.
 \end{enumerate}
  \end{lem}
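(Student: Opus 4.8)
The plan is to trade the abstract statement for the explicit cofree model of $\Pe^{\uu}_T$. By Borel's theorem (\S\ref{sectStructure}) the graded Lie algebra $\mathfrak{u}^{\dr}_T=\Lie U^{\dr}_T$ is free, so $\Pe^{\uu}_T=\Or(U^{\dr}_T)$ is the tensor coalgebra $T^c(V_T)$ equipped with the shuffle product $\sha$ and the deconcatenation coproduct, where $V_T=\mathrm{Prim}(\Pe^{\uu}_T)=((\mathfrak{u}^{\dr}_T)^{ab})^{\vee}$ is spanned by the generator duals: one weight-one letter $\nu_p^{\vee}$ for each $p\in T$, and (here $k=\Q$, so $r_1=1$ and $r_2=0$) one letter $\sigma_{2m+1}^{\vee}$ in each odd weight $2m+1\geq3$. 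Choosing the lifts $\nu_p,\sigma_{2m+1}$ of these classes to $\mathfrak{u}^{\dr}_T$ compatibly with the corresponding generators of $\mathfrak{u}^{\dr}_S$, the faithfully flat surjection $U^{\dr}_T\to\UdrS$ dualises to the inclusion $\Pe^{\uu}_S=T^c(V_S)\hookrightarrow T^c(V_T)=\Pe^{\uu}_T$, where $V_S\subset V_T$ is obtained by discarding exactly the weight-one \emph{bad} letters $\nu_p^{\vee}$ with $p\in T\setminus S$; so $\Pe^{\uu}_S$ is precisely the span of the words containing no bad letter. Finally, in this model the derivation attached to a generator $g\in\{\nu_p\}_{p\in T}\cup\{\sigma_{2m+1}\}_{m\geq1}$ --- namely the component of $D=(\id\otimes\pi)\Delta$ along $g^{\vee}\in Q(\Pe^{\uu}_T)$ --- is $g\cdot(v_1\cdots v_n)=\delta_{v_n,g^{\vee}}\,v_1\cdots v_{n-1}$: it deletes the final letter of a word when that letter equals $g^{\vee}$, and returns the empty word when $n=1$. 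One checks at once that this is a derivation of $\sha$ lowering the weight by $|g^{\vee}|$.

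With this dictionary the forward implication is immediate. If $\xi\in\Pe^{\uu}_S$, then for $p\in T\setminus S$ the operator $\nu_p$ never finds a bad final letter to remove, so $\nu_p\xi=0$, which is (1); and each $\nu_p$ with $p\in S$, as well as each $\sigma_{2m+1}$, sends a word with no bad letter to a word with no bad letter, hence maps $\Pe^{\uu}_S$ into itself, which is (2) and (3). The content is the converse.

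So assume (1), (2), (3); we may take $\xi$ homogeneous of weight $w$, the case $w=0$ being trivial. By (1), $\nu_p\xi=0$ for $p\in T\setminus S$, and by (2)--(3) each $g\cdot\xi$ with $g\in\{\nu_p\}_{p\in S}\cup\{\sigma_{2m+1}\}_{m\geq1}$ lies in $\Pe^{\uu}_S$, so $g\cdot\xi=\sum_u\langle u,g\cdot\xi\rangle\,u$ with $u$ running over words containing no bad letter. Now we integrate $D\xi$ inside $\Pe^{\uu}_S$: put $\zeta:=\sum_{g}\sum_u\langle u,g\cdot\xi\rangle\,(u\,g^{\vee})$, summing over these same generators $g$ and appending the letter $g^{\vee}$ to $u$. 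Then $\zeta\in\Pe^{\uu}_S$: conditions (2)--(3) keep every $g\cdot\xi$ inside $\Pe^{\uu}_S$, and (1) is what allows us to omit the bad generators $g=\nu_p$ with $p\in T\setminus S$, for which $g\cdot\xi=0$. Applying the explicit form of the derivations, $g'\cdot\zeta=g'\cdot\xi$ for \emph{every} generator $g'$: for a retained generator this is the defining recursion for $\zeta$, and for a bad $g'=\nu_p$ both sides vanish (the left because $\zeta$ has no bad letter, the right by (1)). Hence every generator annihilates $\xi-\zeta$. But no nonzero element of positive weight is killed by all of them: given such an $\eta$, pick a word $w_0$ with $\langle w_0,\eta\rangle\neq0$ and let $g_0^{\vee}$ be its final letter; then the word obtained from $w_0$ by deleting that final letter occurs in $g_0\cdot\eta$ with coefficient exactly $\langle w_0,\eta\rangle\neq0$, so $g_0\cdot\eta\neq0$. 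Therefore $\xi-\zeta=0$ and $\xi=\zeta\in\Pe^{\uu}_S$, as required.

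The only real obstacle is the translation carried out in the first paragraph: realising $\Pe^{\uu}_T$ as $T^c(V_T)$, $\Pe^{\uu}_S$ as the sub-coalgebra of bad-letter-free words, and the generator derivations $\nu_p,\sigma_{2m+1}$ --- which a priori are defined only through $D$ and the pairing of $Q(\Pe^{\uu}_T)$ with $(\mathfrak{u}^{\dr}_T)^{ab}$ --- as the concrete final-letter-deletion operators. All of this rests on the freeness of the motivic Galois Lie algebra, hence on Borel's theorem on the rational $K$-theory of $\Or_T$, which we take as input; granting that, what remains is the elementary bookkeeping above.
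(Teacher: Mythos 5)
Your proof is correct, and it rests on the same essential input as the paper's (freeness of $\mathfrak{u}^{\dr}_T$, hence of $\mathfrak{u}^{\dr}_S$, via Borel), but it takes a genuinely more explicit route. The paper disposes of the lemma in one sentence --- ``the conditions imply that the action of $U^{\dr}_T$ on $\xi$ factors through $\UdrS$'' --- i.e.\ a Tannakian argument that the sub-comodule generated by $\xi$ is killed by the normal subgroup $\ker(U^{\dr}_T\to\UdrS)$, whose Lie algebra is the ideal generated by the bad $\nu_p$; making that precise still requires identifying $\Pe^{\uu}_S$ with the invariants of that kernel and running an induction on weight. You instead transport everything into the cofree model $T^c(V_T)$ with shuffle product and deconcatenation coproduct, where the generator derivations become ``delete the matching final letter'', $\Pe^{\uu}_S$ becomes the span of bad-letter-free words, and the converse is an explicit integration: the element $\zeta=\sum_g\sum_u\langle u,g\cdot\xi\rangle\,(u\,g^{\vee})$ reconstructs $\xi$ because no nonzero element of positive weight is annihilated by all generator derivations. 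All the supporting claims check out: the kernel of $U(\mathfrak{u}^{\dr}_T)\to U(\mathfrak{u}^{\dr}_S)$ is the two-sided ideal generated by the bad letters (the bad degree-one generators necessarily die in $\mathfrak{u}^{\dr}_S$ since $[\mathfrak{u}^{\dr}_S,\mathfrak{u}^{\dr}_S]$ begins in degree $2$, and a Poincar\'e-series count shows the ideal is the whole kernel), so its annihilator in the graded dual is exactly the bad-letter-free words; and the truncation operator is indeed the component of $D$ along a primitive functional, since a Lie element pairs trivially with products and with words of length $\neq 1$. One cosmetic remark: the ``compatible choice of lifts'' you invoke is automatic --- any lifts of a basis of $(\mathfrak{u}^{\dr}_S)^{\mathrm{ab}}$ freely generate $\mathfrak{u}^{\dr}_S$ --- so your argument applies to the lifts fixed in the statement without further hypothesis. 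What your version buys is an effective statement: it exhibits $\xi$ as an explicit polynomial-free ``antiderivative'' of its images under the finitely many generators of weight $\le w$, which is precisely the recursive computability the paper appeals to in the paragraph following the lemma.
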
 
  \begin{proof} The conditions imply that the action of $U^{\dr}_T$ on $\xi$ factors through   $\UdrS$.
  \end{proof} 
Since the operators $\sigma_{2n+1}$ decrease the degree  by $2n+1$,  and $\Pe^{\uu}_T$ has no elements of negative degrees, there are only finitely many conditions to be satisfied in  $(2)$, and the lemma yields a finite number of conditions for $\xi$ to be unramifed at primes outside $S$. 

Since the ramification conditions are linear, the space $\mathrm{Div}_{M,S}(X_T)$ is isomorphic to an affine space $\A^n$ if $M$ is finite-dimensional.

 \begin{prop}  \label{propdivisinf} Let $M$ be finite-dimensional. Then 
the dimension 
of $\mathrm{Div}_{M,S}(X_T)$ grows exponentially in $|T|$. 
 \end{prop}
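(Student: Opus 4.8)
The plan is to realise $\mathrm{Div}_{M,S}(X_T)$ as a linear subspace of the affine space $\mathrm{Div}(X_T)$ whose codimension is only \emph{polynomial} in $|T|$, and then to quote the classical fact that $\dim\mathrm{Div}(X_T)=|X_T|$ grows faster than any polynomial in $|T|$ along suitable sequences of $T$. Note that the inclusion $\mathrm{Div}_{M,S}(X_T)\subseteq\mathrm{Div}(X_T)$ forces $\dim\mathrm{Div}_{M,S}(X_T)\le|X_T|$, so the growth can be no better than that of the number of $S$-unit solutions; the exponential growth is inherited wholesale from $|X_T|$, and this lower bound on $|X_T|$ is the one non-formal ingredient.

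First I would unwind the fibre product. By definition $\mathrm{Div}_{M,S}(X_T)=\mathrm{Div}(X_T)\times_{H_T(M)}H_S(M)=f^{-1}(H_S(M))$, where $f\colon\mathrm{Div}(X_T)\to H_T(M)$ is the cocycle map $\sum n_x x\mapsto\sum n_x c^x|_M$ — the analogue of $(\ref{DivXsToHom})$ with $X_S$ replaced by $X_T$ — which is a $k$-linear morphism of affine vector-space schemes. Since $H_S(M)\hookrightarrow H_T(M)$ is a linear subspace, $f^{-1}(H_S(M))$ is a linear subspace of $\mathrm{Div}(X_T)\cong\A^{|X_T|}$ and
\[
\dim\mathrm{Div}_{M,S}(X_T)\;=\;|X_T|-\dim\bigl(\mathrm{im}(f)/(\mathrm{im}(f)\cap H_S(M))\bigr)\;\ge\;|X_T|-\dim\bigl(H_T(M)/H_S(M)\bigr);
\]
even more crudely, $\mathrm{Div}_{M,S}(X_T)\supseteq\ker f$, of codimension at most $\dim H_T(M)$.

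Next I would check that this codimension is polynomial in $|T|$. Since $M$ is finite-dimensional and graded by the (even) weight, write $M=\bigoplus_{d\le D}M_{2d}$; then $H_T(M)=\bigoplus_{d\le D}\Hom(M_{2d},(\Pe^{\uu}_T)_{2d})$ (and likewise with $T$ replaced by $S$), so
\[
\dim\bigl(H_T(M)/H_S(M)\bigr)=\sum_{d\le D}\dim M_{2d}\cdot\bigl(\dim(\Pe^{\uu}_T)_{2d}-\dim(\Pe^{\uu}_S)_{2d}\bigr).
\]
By \S\ref{sectStructure} (Borel's theorem, with $k=\Q$ so $r_1=1$, $r_2=0$), $\Pe^{\uu}_T=\Or(U^{\dr}_T)$ is the graded dual of the enveloping algebra of the free graded Lie algebra with $|T|$ generators in weight $1$ and one generator in each odd weight $\ge 3$; dividing weights by $2$ as elsewhere, its Poincar\'e series is $\bigl(1-|T|t-t^3/(1-t^2)\bigr)^{-1}$. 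Hence, for each fixed $d$, $\dim(\Pe^{\uu}_T)_{2d}$ is a polynomial in $|T|$ of degree $d$ with leading term $|T|^d$, and so $\dim\bigl(H_T(M)/H_S(M)\bigr)=O(|T|^{D})$, a polynomial in $|T|$ whose degree depends only on $M$.

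Finally I would feed in a lower bound for $|X_T|$. As $\mathrm{Div}(X_T)$ is the affine space of $k$-valued functions on the finite set $X_T$ (Siegel's theorem), $\dim\mathrm{Div}(X_T)=|X_T|$, the number of solutions of $u+v=1$ with $u,v\in\Z[1/T]^{\times}$; by the theorem of Erd\H{o}s, Stewart and Tijdeman on unit equations with many solutions, there are sets $T$ of unboundedly large cardinality $t$ with $|X_T|\ge\exp\bigl((4+o(1))\sqrt{t/\log t}\bigr)$, which dominates every polynomial in $t$. Combining, along such $T$,
\[
\dim\mathrm{Div}_{M,S}(X_T)\;\ge\;|X_T|-O(|T|^{D})\;=\;(1+o(1))\,|X_T|,
\]
which grows at the exponential-type rate of $|X_T|$. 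The one delicate point is precisely this last step: the dimension's growth is inherited from $|X_T|$, so the assertion is really about the existence of sequences $T$ with $|T|\to\infty$ along which $\dim\mathrm{Div}_{M,S}(X_T)$ grows super-polynomially (at least like $\exp(c\sqrt{|T|/\log|T|})$), the remaining steps being pure linear algebra over $k$ together with the dimension count of \S\ref{sectStructure}.
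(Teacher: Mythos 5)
Your proof is correct and takes essentially the same route as the paper's: bound the codimension of $\mathrm{Div}_{M,S}(X_T)$ inside $\mathrm{Div}(X_T)$ by a quantity polynomial in $|T|$ (depending only on the finite-dimensional $M$), then invoke Erd\H{o}s--Stewart--Tijdeman to get super-polynomial growth of $|X_T|=\dim\mathrm{Div}(X_T)$. The only cosmetic difference is that the paper counts the unramifiedness conditions via lemma \ref{lemram} (the derivations $\nu_p$, $\sigma_{2n+1}$), whereas you bound the codimension directly by $\dim H_T(M)$ using Borel's dimension formula; and you correctly observe, as the paper's proof implicitly does, that what is actually established is super-polynomial rather than literally exponential growth.
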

 \begin{proof} Let $m = |T \backslash S|$. The  number of conditions for an element $\xi \in \mathrm{Div}(X_{T})$ to be an $M$-virtual $S$-unit is a polynomial in $m$, by lemma \ref{lemram},   since $M$ is finite.   But by \cite{EST},  the number of $T$ units grows faster than polynomially in $|S'|$.  
 \end{proof}

We  view the space of virtual cocycles
$$V^1_T(X_S,M) \quad \subset \quad \mathrm{Div}_{M,S}(X_T)$$
as a closed subscheme of this affine space.

\begin{thm} In the situation of theorem $\ref{mainThmDiv}$,
there exists a finite set of primes $T$ containing $S$,  a  finite extension $k$ over $\Q$, and a    motivic Coleman function such that 
$$\sum_{i=1}^m p_i  \Li^{\uu}_{w_i}(x) =0  \qquad \hbox{ for all } x \in X_S \ , $$
where the $p_i\in \Pe^{\uu}_S \otimes_{\Q} k $ are not all zero and are given by explicit polynomials in the $\Li^{\uu}_{w_k}(\xi_j)$ for some divisors $\xi_j \in \mathrm{Div}_{X_T}(k)$ with support in $X_T$. 
\end{thm}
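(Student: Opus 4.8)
The plan is to derive the result from Theorem~\ref{mainThmDiv} by a Laplace (row) expansion of the vanishing determinant, exactly as in the corollary of \S\ref{sectComments}(2), but with virtual cocycles replacing integral points; Proposition~\ref{propdivisinf} is what guarantees there is enough room to carry this out. Since we only seek a single non-trivial relation, we may replace $\Pi$ by a finite-weight quotient $\Pi/W_{-2w-1}\Pi$ and take the generating space $M$ finite-dimensional, so that $H_S(M)$, $Z:=Z_{\G_m}^1(\GdrS,\Pi^{\dr})$ and $\mathrm{Div}_{M,S}(X_T)$ are finite-dimensional affine $\Q$-schemes. The first step is the dimension bound $\dim V^1_T(X_S,M)\ge\dim\mathrm{Div}_{M,S}(X_T)-\bigl(\dim H_S(M)-\dim Z\bigr)$, which follows by identifying $V^1_T(X_S,M)=\mathrm{Div}_{M,S}(X_T)\times_{H_S(M)}Z$ (a nonempty fibre product, since $X_S$ maps into it by $(\ref{XSinV1})$) and invoking the standard lower bound on the dimension of a fibre product of varieties. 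The subtracted codimension does not depend on $T$, whereas $\dim\mathrm{Div}_{M,S}(X_T)\to\infty$ by Proposition~\ref{propdivisinf}; hence for $|T|$ large $V^1_T(X_S,M)$ is positive-dimensional and has arbitrarily many closed points defined over a suitable number field $k$. In particular $V^1_\infty(X_S,M)(\overline{\Q})$ is infinite, so the construction below is not vacuous.

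Next, fix $n$ with $\dim\gr^W_{2n}\Or(\Pi^{\dr})>N:=\dim\Or(Z)_{2n}$ and linearly independent $w_1,\dots,w_N\in\gr^W_{2n}\Or(\Pi^{\dr})$, and for a divisor $\xi=\sum n_x x$ write $\Li^{\uu}_w(\xi):=\sum n_x\,I^{\uu}(x;w;0)$, the entry appearing in $(\ref{detinthmDiv})$. Let $m$ be the largest integer for which there exist $\xi_1,\dots,\xi_m\in V^1_\infty(X_S,M)(\overline{\Q})$ and indices $i_1,\dots,i_m$ with $\det\bigl(\Li^{\uu}_{w_{i_a}}(\xi_b)\bigr)_{1\le a,b\le m}\ne0$. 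Applying Theorem~\ref{mainThmDiv} to the set $\{w_1,\dots,w_N\}$ shows the full $N\times N$ determinant always vanishes, so $0\le m\le N-1$. If $m=0$ then $\Li^{\uu}_{w_1}$ vanishes on all of $V^1_\infty(X_S,M)$, in particular on $X_S$, and the theorem holds with the degenerate relation $\Li^{\uu}_{w_1}(x)=0$; so assume $m\ge1$ and fix such $\xi_1,\dots,\xi_m$. They lie in $V^1_T(X_S,M)(k)$ for some finite $T\supseteq S$ and some number field $k$, and I would now observe that for any point $\xi$ of $V^1_T(X_S,M)$ the associated cocycle $c^{\xi}\colon\Or(\Pi^{\dr})\to\Pe^{\uu}_T\otimes_\Q k$ is a graded algebra homomorphism whose restriction to $M$ lands in $\Pe^{\uu}_S\otimes_\Q k$ by the $\mathrm{Div}_{M,S}$-condition; since $M$ generates $\Or(\Pi^{\dr})$, the whole of $c^{\xi}$, and hence every $\Li^{\uu}_{w_k}(\xi_b)=c^{\xi_b}(w_k)$, lies in $\Pe^{\uu}_S\otimes_\Q k$.

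Finally, choose an index $i_0\notin\{i_1,\dots,i_m\}$ (possible since $m<N$) and let $x\in X_S$ be arbitrary, viewed as a $\Q$-point of $V^1_\infty(X_S,M)$ via $(\ref{XSinV1})$. By maximality of $m$, the $(m{+}1)\times(m{+}1)$ determinant of $\bigl(\Li^{\uu}_{w_{i_a}}(\eta_b)\bigr)$ with rows indexed by $w_{i_0},w_{i_1},\dots,w_{i_m}$ and columns $(\eta_0,\eta_1,\dots,\eta_m)=(x,\xi_1,\dots,\xi_m)$ vanishes. Expanding along the column of $x$ gives a relation $\sum_{a=0}^{m}p_a\,\Li^{\uu}_{w_{i_a}}(x)=0$, valid for all $x\in X_S$, where $p_a\in\Pe^{\uu}_S\otimes_\Q k$ is, up to sign, the $m\times m$ minor obtained by deleting row $w_{i_a}$ and the $x$-column; in particular $p_0=\pm\det\bigl(\Li^{\uu}_{w_{i_a}}(\xi_b)\bigr)_{1\le a,b\le m}\ne0$, so the $p_a$ are not all zero. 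These coefficients are explicit determinantal polynomials in the $\Li^{\uu}_{w_k}(\xi_b)$ with $\xi_b\in\mathrm{Div}_{X_T}(k)$, which is the asserted motivic Coleman function; applying $\per_p$ for a prime $p\notin S$ then yields a non-trivial $p$-adic Coleman function on $X(\Q_p)$ vanishing on $X_S$, as in Corollary~\ref{corfinite}. The main obstacle is the content of the first paragraph: one must check that the (finitely many, since $M$ is finite-dimensional) cocycle equations cut $\mathrm{Div}_{M,S}(X_T)$ down by a codimension bounded independently of $T$, so that together with Proposition~\ref{propdivisinf} the schemes $V^1_T(X_S,M)$ genuinely grow with $|T|$; granting this, the maximality argument makes the non-triviality of the extracted relation automatic, which is the point where the corollary of \S\ref{sectComments} requires care.
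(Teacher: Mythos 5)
Your proof is correct and follows essentially the same route as the paper: the dimension of $V^1_T(X_S,M)$ grows with $|T|$ because the cocycle conditions impose a number of constraints depending only on $M$ (combined with Proposition~\ref{propdivisinf}), and a minimality/maximal-nonsingular-minor argument on the determinant of Theorem~\ref{mainThmDiv} extracts a relation with not-all-zero coefficients. Your write-up is in fact somewhat more careful than the paper's at two points the paper leaves implicit --- the identification $V^1_T=\mathrm{Div}_{M,S}(X_T)\times_{H_S(M)}Z$ underlying the dimension bound, and the verification that the minors land in $\Pe^{\uu}_S\otimes_{\Q}k$ rather than $\Pe^{\uu}_T\otimes_{\Q}k$ --- but these are elaborations, not a different argument.
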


\begin{proof}

The number of equations defining $V^1_T(X_S,M)$ only depends on $M$. It follows that the dimension 
of $V^1_T(X_S,M)$ tends to infinity as $|T|$ tends to infinity (this lends some plausibility to question 1.)
In particular, for sufficiently large $T$, we have 
$$\dim V^1_T(X_S,M) >0\ $$
and hence an infinite supply of virtual cocycles. 
Theorem \ref{mainThmDiv} therefore overcomes the difficulties explained in \S\ref{sectasymptotics}: although there may not be $N$ integral points on $X_S$ to which one can apply the theorem $ \ref{mainthm}$, we can always find $N$ virtual cocycles if $T$ is suffciently large, after passing to a finite field extension.   

In particular,  we can proceed as in \S\ref{sectComments} (2). Let us suppose, by repeatedly taking minors of the matrix
$(\ref{detinthm})$ and reducing the value of the integer $N$, that every $N-1$ by  $N-1$ minor is not identically zero on all sets of $N-1$ elements
of $V^1_{\infty}(X_S, M)(k)$. 
Therefore we can take  $\xi_1,\ldots, \xi_{N-1} \in V^1_{\infty}(X_S,M)(k)$   and $\xi_N=x \in X_S$ an actual integral point via $(\ref{XSinV1})$ to obtain  a motivic Coleman function 
$$\sum p_i  \Li^{\uu}_{w_i}(x) =0  \qquad \hbox{ for all } x \in X_S \ , $$
where the $p_i \in \Pe^{\uu}_S \otimes_{\Q} k $ are minors of the matrix  $(\ref{detinthmDiv})$.  By minimality of $N$, they are not all zero.
\end{proof}

\begin{question} Is $\Pe^{\uu}_{\MT(\Q)}$ generated by the $\Li^{\uu}_w(x)$, for $x \in X(\Q)$?
\end{question} 
Likewise, one can ask if $\Pe^{\mm,+}_{\MT(\Q)}$ is generated by the $\Li^{\mm}_w(x)$?

\subsection{Examples} 
 
 \subsubsection{Example 1} Let $\Pi= {}_0 \Pi_0^{ab}$. It is the  weight one quotient of the motivic fundamental group. Its affine ring is the symmetric 
 tensor algebra on $\{e_0,e_1\}$.  Let 
 $S= \{2\}$ and $S' = \{2,3\}$.  The set $X_{S'} \cap (0,1)$ of $S'$ integral points is 
 $$ D= \textstyle{ \{  {1\over 9},   { 1\over 4 } ,  {1\over 3},{1 \over 2},  {2 \over 3}, {3 \over 4}, {8 \over 9 }  \} }\ .$$
The point $\{{1\over 2} \} $ isa  genuine integral point on $X_S$. Since $\Or(\Pi^{\dr})$ is generated by $M_1= \Q e_0 \oplus \Q e_1$,  and $\Li^{\uu}_{e_0} = \log^{\uu}$, $\Li^{\uu}_{e_1} (x)= \Li_1^{\uu}(x) =- \log^{\uu}(1-x) $, the condition for 
a divisor  $\xi$  supported on these points to be unramified is that 
$$\log^{\uu}(\xi)   \    \hbox{ and } \  \log^{\uu}(1-\xi)   \in  \Pe^{\uu}_S$$
i.e., both are unramified at prime $3$. Thus we are looking for  linear combinations of the  points  in $D$ such that all $\log^{\uu}(3)$ terms cancel.  
By the functional equation of the unipotent logarithm  (\cite{NotesMot} \S5.3) we have 
$$\log^{\uu} ( \prod p_i^{n_i} ) = \sum n_i \log^{\uu}(p_i)$$
where $p_i$ are positive primes, and $n_i \in \Z$. We find, then, that in addition to $\xi_0 = {1\over 2}$, the following elements
are $M_1$-virtual $S$-units:
$$
\begin{array}{lcl}
\xi_1= \textstyle{[ {1 \over 9}] -2 [ {1\over 3}]}    & \qquad    & \xi_2= \textstyle{[ {2 \over 3}] -  [ {1\over 3}]}   \\
  \xi_3 = \textstyle{[ {3\over 4}] + [ {1 \over 3}]+ [ {1\over 4}]}&  \qquad & \xi_4 = [ {8\over 9}] -2\textstyle{[ {1 \over 3}]} 
\end{array}
$$
For example,  $\Li_{e_0}^{\uu}(\xi_1) = \log^{\uu}(\xi_1) = \log^{\uu}({1\over 9}) - 2 \log^{\uu}({1\over 3})=0$ and similarly $\Li_{e_1}^{\uu}(\xi_1) = -\log^{\uu}(1-\xi_1) = \log^{\uu}(2)$, which are both  unramified  at the prime $3$.   This can also be checked using $(1)$ and $(2)$ of lemma \ref{lemram}. Therefore
$$\mathrm{Div}_{M_1,S}(D)(\Q) = \Q  \xi_0 \oplus \Q  \xi_1  \oplus \Q  \xi_2  \oplus \Q  \xi_3  \oplus \Q \xi_4   $$
and in this case we have $V^1_D(X_S,M_1) =  \mathrm{Div}_{M_1,S}(D)$ since there are no further conditions for a divisor to define a cocycle. 
\subsubsection{} Now let $S$ and $S'$ be as above, and let $\Pi$ be the weight two quotient of ${}_0 \Pi_0$. Its  affine ring in $\dr$ is the Hopf algebra generated by 
$W_2 \Or({}_0\Pi^{\dr}_0)$, and is generated  by  $\{e_0, e_1,e_1e_0\}$.  Let  $M_2= \Q e_0 \oplus \Q e_1 \oplus \Q e_1 e_0$.  Since $\Li^{\uu}_{e_1e_0} = \Li_2^{\uu}$, the extra condition for a divisor $\xi$ to be $M_2$-unramified  is that 
$$\Li^{\uu}_2 (\xi) \in \Pe^{\uu}_S\ .$$
We have already computed 
\begin{equation} \Delta \Li^{\uu}_2(x) =1\otimes \Li^{\uu}_2(x)  +  \Li^{\uu}_1(x) \otimes \log^{\uu}(x) + \Li^{\uu}_2(x)\otimes 1 . 
\end{equation}
Therefore the condition that $\Li^{\uu}_2(\xi)$ be unramified at $3$,  where $\xi= \sum n_x [x]$,  is the condition  that  the coefficients in
$$\Delta' \Li^{\uu}_2(\xi) = \sum n_x \log^{\uu}(1-x) \otimes  \log^{\uu}(x) $$
of the terms $\log^{\uu}(2) \otimes \log^{\uu}(3)$, $\log^{\uu}(3) \otimes \log^{\uu}(3)$ and $\log^{\uu}(3) \otimes \log^{\uu}(2)$ should  all vanish. 
There are three such equations, and we find that 
$$\mathrm{Div}_{M_2,S}(D)(\Q) = \Q  \xi_0 \oplus  \Q \xi_5 \ , $$
is two dimensional, where
\begin{equation}\label{xi0} \xi_0 = [\textstyle{1\over 2}] \quad   \hbox{ and } \quad  \xi_5 =  \textstyle{   6 [ {1 \over 3}]- [{1\over 9}] -6 [ {2\over 3}] + [ {8\over 9}]    }\ .
\end{equation}
We easily check that   $\Li_{e_0}^{\uu}(\xi_5)= -3 \log^{\mm}(2)$ and $\Li_1^{\uu}(\xi_5) = -3 \log^{\mm}(2)$. 
$$\Delta' \Li^{\uu}_2(\xi_0)=- \log^{\uu}(2) \otimes \log^{\uu}(2) \qquad \hbox{ and } \qquad  \Delta' \Li^{\uu}_2(\xi_5)= 0 $$
The second equation implies that $\Li^{\uu}_2(\xi_5)=0$, and that 
$\Li^{\mm}_{2} (\xi_5) = \alpha \zetam(2)$, for some $\alpha \in \Q$. By taking the period, one verifies numerically that  $\alpha$ is very close to $-1$.

 \subsubsection{} Continuing the previous example,  let $\xi$ be an $M_2$-virtual $S$-unit. Then the condition that $\xi$ define a virtual cocycle in $Z^1_{\G_m}(\GdrS, \Pi)$ is the  equation 
 $$\Delta' \Li_2^{\uu}(\xi) = \Li_1^{\uu}(\xi) \otimes \log^{\uu}(\xi)  \ . $$
 If we write  $\xi = \sum n_i [\alpha_i]$, this is equivalent to the equations
 \begin{equation} \label{alphaeqns}  \sum_i  n_i  \big( \log^{\uu}(1-\alpha_i) \otimes  \log^{\uu}(\alpha_i)\big) = \big(\sum_i n_i \log^{\uu}(1-\alpha_i) \big) \otimes \big(\sum_j n_j \log^{\uu}(\alpha_j)\big) \ .
 \end{equation} 
Let us identify $\mathrm{Div}_{M_2,S}(D)\cong \A^2$ by writing   $\xi = t_1 \xi_0 + t_2  \xi_5$.
 By earlier calculations  $\Delta' \Li_2^{\uu}( \xi) =- t_1 \log^{\uu}(2) \otimes \log^{\uu}(2)$, 
   $\log^{\uu}(\xi) = (-t_1 -3t_2 ) \log^{\uu}(2)$ and  $\Li_1^{\uu}(\xi) =  (t_1 -3t_2 )\log^{\uu}(2)$. Therefore the condition that $\xi$ lie in $ 
V^1_D(X_S, M_2)$ is the equation 
$$  - t_1 \log^{\uu}(2) \otimes \log^{\uu}(2)    = (-t_1 -3t_2 )  (t_1 -3t_2 )  \log^{\uu}(2) \otimes \log^{\uu}(2)\ .$$
Since $\log^{\uu}(2)$ is non-zero,   the  subscheme
$$  V^1_D(X_S, M_2) \ \subset \  \mathrm{Div}_{M_2,S}(D)$$
is given by the conic 
   $$t_1^2 - 9 t_2^2 - t_1 =0 \ .$$
    By parametrizing this conic we find that the following family of divisors 
\begin{equation}     {1 \over 1-a^2}   \xi_0 +   {a \over 3(a^2-1)} \xi_5    \end{equation} 
lie in $V^1_D(X_S, M_2)$,  
for any $a\neq \{0,1\}$, where $\xi_0$ was defined in $(\ref{xi0})$.  When $a=0$ we retrieve the  genuine point ${1\over 2} \in X_S$, but for other values of $a$ we obtain a  new cocycle. 

\subsubsection{}  Let $M_2$, $\Pi$, $D$ be as above. Let $S'= \{2,3\}$ but this time take 
 $S= \{3\}$.  Proceeding in a similar manner, we find a vector space of two unramified divisors:
 $$\xi_6= -6[\textstyle{1 \over 2}]+[\textstyle{1\over 9}]-6[\textstyle{1\over 3}]+{3\over  2}[\textstyle{1\over 4}]-3[\textstyle{2\over 3}] 
 \quad \hbox{ and } \quad \xi_7= -3[\textstyle{1\over 3}]-6[\textstyle{1\over 2}]-6[\textstyle{2\over 3}]+{3\over 2}[\textstyle{3\over 4}]+[\textstyle{8\over 9}]$$
 So $\mathrm{Div}_{M_2,S}(D)(\Q) =  \Q \xi_6 \oplus \Q \xi_7$. 
The scheme of virtual $\Pi$-cocycles is a conic in this two dimensional affine space, and contains the following rational point
$$  - \textstyle{6\over 7}  [\textstyle{1 \over 2} ]+\textstyle{15 \over 7} [\textstyle{1 \over 3} ]-\textstyle{9\over 7} [\textstyle {1 \over 4}]-\textstyle{6 \over 7}[ \textstyle{1\over 9}]-\textstyle{24 \over 7}[\textstyle{2 \over 3}]+\textstyle{3\over 2} [\textstyle{3 \over 4}]+ [\textstyle{8 \over 9}]\qquad \in  \qquad V_D^1(X_S, M_2)(\Q) \ . $$
It satisfies $(\ref{alphaeqns})$.
 As far as the dilogarithm is concerned, this divisor plays the role of  a point on $X_S$, even though there are no such points since $X_S = \emptyset$. 

\subsection{Remarks on Zagier's conjecture and Bloch group} Beilinson and Deligne's interpretation of Zagier's conjecture involves finding divisors $\xi = \sum n_x [x]$ such that 
\begin{equation} \label{Zagcond}  \Delta' \Li_n^{\uu}(\xi) =0\ . \end{equation} 
For $n>1$, such a $\xi$ defines an element of $\mathrm{Ext}^1_{\MT(\Q)}(\Q(0),\Q(n))$, and the single-valued period of the associated de Rham period  is a  rational multiple of $\zeta(2k+1)$ if $n=2k+1$ is odd. The conjecture follows easily from the known theorems about mixed Tate motives over $\Q$ in this case. 
The   condition $(\ref{Zagcond})$ is  linear, and gives an explicit description of the rational algebraic $K$-theory of $\Q$.  This story can be extended to number fields.

Compare, on the other hand, the equations defining a virtual cocycle, which in the case of the classical polylogarithms are the non-linear equations
\begin{equation}   \Delta' \Li_n^{\uu}(\xi) =  \sum_{i=0}^{n-1} \Li^{\uu}_{n-i}(\xi) \otimes {1 \over i!} \big(\!\log^{\uu}(\xi)\big)^i
\end{equation} 
  Compare with $(\ref{DeltaLiuu})$. More precisely, these are the equations defining $V^1_D(X_S,M)$ inside $\mathrm{Div}_{M,S}(D)$
where $\xi$ is any divisor supported on $D$, where $M = \Q e_0 \oplus \bigoplus_{0 \leq k < n } \Q e_1 e_0^k$.

 \section{General curves} \label{sectfinal}
 We  indicate how the above arguments can be generalised to arbitrary curves over a number field $k$.
  The missing ingredient
is the conjectural upper bound on the space of motivic periods, which is expected to follow from a version of Beilinson's conjecture. For this reason
we only provide the key steps in the argument and leave the computations of the conjectural dimensions for future applications.  
The main novelty in this section is the definition and use of the canonical de Rham path on the schemes $\PF$. It is not clear how to make the argument work using the higher Albanese manifolds.

We work in a Tannakian category $\mathcal{H}$ of realisations \cite{DeP1}, \S1.10 which contains as many cohomology theories as required  and sufficiently many to ensure 
that, at least conjecturally, the category of mixed motives over $k$ should be a full subcategory of $\mathcal{H}$. 
Let $X$ be  a curve over $k$, and consider the  situation  \S\ref{sectOtherReal}. 
In particular, Beilinson's cosimiplicial construction of the unipotent fundamental groupoid of $X$ defines   a groupoid of pro-objects
$\pi^\mathcal{H}_1(X,y,x)$ in $\mathcal{H}$ for every $x,y\in X(k)$, whose realisations coincide with those defined in \S\ref{sectunip}. 
For any pair of fiber functors
$\omega, \omega'$ on $\mathcal{H}$, let $\Pe^{\omega',\omega}_{\mathcal{H}}$ denote the  ring of $\omega', \omega$-periods of $\mathcal{H}$, and let 
$\Pe_X^{\omega',\omega} \subset \Pe^{\omega',\omega}_{\mathcal{H}}$   denote the ring generated by the matrix coefficients of $\Or(\pi^\mathcal{H}_1(X,y,x))$ for all $x,y\in X(k)$, or possibly tangential base points over $k$.
  The main examples 
 that we have in mind are when $\omega$ is the de Rham or crystalline  fiber functor, and $\omega'$ is the Betti functor relative to an embedding of $k$ into $\C$.  A version of   Beilinson's conjectures predicts some control on the dimensions of the weight-graded pieces of $\Pe_X^{\omega',\omega} $, which we shall not discuss here.

For $x,y $ as above,  write
${}_y\Pi^{\omega}_x$ for the $\omega$-realisation of $\pi^\mathcal{H}_1(X,y,x)$.  These form torsors:
\begin{equation}\label{Generaltorsor} {}_y\Pi^{\omega}_x  \times {}_x\Pi^{\omega}_x  \To {}_y\Pi^{\omega}_x
\end{equation}
and possess a rational point, since $ {}_x\Pi^{\omega}_x$ is a pro-unipotent affine group scheme.

The discussion proceeds along the lines of \S\ref{sectMainargument}. The main difference is that we now consider two fiber functors $\omega$ and $\omega'$, which leads to a notion of bitorsors.  In the mixed Tate case we used `canonical de Rham paths' using the weight-grading on the de Rham or canonical fiber functor.   Since these do not exist in general,  they will be replaced
 with a new notion of canonical de Rham paths which we shall discuss at some length, and  in the crystalline case by Frobenius-invariant paths, which are well-known.

 \subsection{Cocycles of  a bi-torsor}   \label{sectBitorsCocyc} Let $0$ be a fixed rational base point (tangential or otherwise) of $X$.
 Let $G^{\omega} = \mathrm{Aut}_{\mathcal{H}}^{\otimes}(\omega)$, and likewise for $\omega'$,  and set
  $$P = \mathrm{Isom}_{\mathcal{H}}^{\otimes} (\omega, \omega') = \mathrm{Spec}  \,  \Pe^{\omega',\omega}_{\mathcal{H}} \ .$$
  It is a  $G^{\omega} \times G^{\omega'}$ bitorsor (since our convention is that Tannaka groups act on the left of fundamental groups, and hence on  the right on their affine rings). 
 Suppose that for every $x\in X_S$ we are given two paths 
 \begin{equation}   \label{xcgo} \xco \in \xpio(k)   \qquad \hbox{ and } \qquad  \xgo \in {}_x \Pi^{\omega'}_0(k')  \    \end{equation}
where $ k'$ is an extension of $k$ (for example, a $p$-adic field).
 The scheme $P$ acts via:
 $$P\times {}_x \Pi^{\omega'}_0 \To {}_x \Pi^{\omega}_0\ .$$
  Since ${}_x \Pi^{\omega}_0$ is a ${}_0 \Pi_0^{\omega}$-torsor we obtain a morphism of schemes over $k'$
\begin{equation}\label{abicocycledef} a: P \times_k k'  \To {}_0 \Pi_0^{\omega} \times_k k' 
\end{equation}
given on points by the equation  $\phi(\xgo ) = \xco \,   a_{\phi} $ for $\phi \in P$.  It satisfies
\begin{equation} a_{g \phi h }  = \alpha_g \, g(a_{\phi}) \, g \phi(\beta_h)  \qquad \hbox{for } \quad  (g,h) \in G^{\omega} \times G^{\omega'}\ ,
\end{equation}
where 
$$\alpha \in Z_{\alg}^1(G^{\omega}, \opio)(k) \qquad \hbox{  and  }  \qquad \beta \in Z_{\alg}^1(G^{\omega'}, {}_0 \Pi^{\omega'}_0)(k')$$
 are the algebraic cocycles defined
by $\xco$ and $\xgo$ respectively, i.e., $g (\xco) = \xco \,  \alpha_g$ and 
$g (\xco)  = \xco  \, \beta_g$.  We call the data of $a,\alpha, \beta$ an algebraic  bi-torsor cocycle.

Changing $\xco$ to $\xco b^{-1}$ and  $\xgo$ to $\xgo c^{-1}$ modifies 
it by  a boundary:
\begin{equation} \label{atwistbyboundary} 
( \alpha_g,a_{\phi}, \beta_h) \mapsto  (  b \, \alpha_g\, g(b)^{-1}, b \, a_{\phi} \, \phi(c)^{-1},  c \,\beta_h \, h(c)^{-1})
\end{equation}
where  $(b,c) \in {}_0 \Pi^{\omega}_0 (k) \times {}_0 \Pi^{\omega'}_0 (k')$.
 From now on,  we shall  mostly emphasise the functor $\omega'$  and consider only the right action of $G^{\omega'}$ on $P$. In other words, we shall  drop $\alpha$ from the data and consider only $(a, \beta)$. 
  
 \subsection{}  Let $\Pi$ be an affine  group  scheme in $\mathcal{H}$.  This means that its affine ring is an Ind-object of $\mathcal{H}$.   Let 
  $Z_{r, \alg}^1(P, \Pi^{\omega}) $ denote  the functor   whose $R$ points, where $R$ is a commutative ring over $k$, are pairs of  morphisms  of schemes
  \begin{eqnarray}
  a : P \times_k R & \To &  \Pi^{\omega} \times_k R \nonumber \\ 
  \beta  : G^{\omega'}  \times_k R & \To &  \Pi^{\omega'} \times_k R \nonumber 
    \end{eqnarray} 
  such that the following cocycle condition is satisfied
 \begin{equation}  \label{atorseqn} a_{ \phi h } = a_{\phi} \,  \phi(\beta_h) \end{equation} 
 for all  $h \in G^{\omega'}, \phi \in P$. This implies that  $\beta$ is necessarily an algebraic  $G^{\omega'}$-cocycle, and   therefore there  is a natural transformation of functors 
 \begin{equation}  \label{ZPtoZG}
 Z_{r,\alg}^1 (P, \Pi^{\omega} ) \To Z_{\alg} ^1(G^{\omega'}, \Pi^{\omega'})\ .
 \end{equation} 
 Its  fibers are left $\Pi^{\omega}$-torsors. To see this, let   $a,a' \in Z_r^1 (P, \Pi^{\omega} )(R)$  be two elements with the same image.
Define $b: P\times_k R \rightarrow \Pi^{\omega}\times_k R$ by $b_{\phi} = a'_{\phi} a_{\phi}^{-1}$. 
Then by $(\ref{atorseqn})$, 
$$b_{\phi h} = a'_{\phi h } a^{-1}_{\phi h} = a'_{\phi} \phi(\beta_h) \phi(\beta_h)^{-1} a_{\phi}^{-1} = b_{\phi}$$ for all $h \in G^{\omega'} \times_k R$, and since $P$ is a $G^{\omega'}$-torsor, it is trivialised over some finite flat 
extension $R'$ of $R$.  It follows that $b_{\phi}$ is constant for all $\phi \in P(R')$. Therefore the images of $a,a' \in Z_{r,\alg}^1 (P, \Pi^{\omega} ) (R')$ satisfy $a' = b a$ for some $b\in \Pi^{\omega}(R')$. The converse is clear.  
On the other hand, since $\Pi^{\omega}$ is pro-unipotent, it follows that the fibers of $(\ref{ZPtoZG})$ are either empty or trivial $\Pi^{\omega}$-torsors.

\begin{rem} Define the  space of  left $P$-cocycles  $Z_{l,\alg}^1(P, \Pi^{\omega})$ to be the functor  whose $R$-points are morphisms
 $a : P\times_k R \rightarrow \Pi^{\omega}\times_k R$ together with a $a \in Z_{\alg}^1(G^{\omega}, \Pi^{\omega})(R)$ such that 
 $ a_{g \phi} =  \alpha_g \, g(a_{\phi}) $
 for all  $g \in G^{\omega}, \phi \in P$. There is a natural transformation
 \begin{equation} \label{ZleftPtoZG}
 Z_{l ,\alg}^1 (P, \Pi^{\omega} ) \To Z_{\alg}^1(G^{\omega}, \Pi^{\omega}) 
 \end{equation}
whose fibers are right $Z^0_l$-torsors, where $Z^0_l$ is the inverse image of the trivial cocycle. Its $R$-points are morphisms  $r: P \times_k R \rightarrow \Pi^{\omega}\times_k R$
such that $r_{g\phi} = g(r_{\phi})$ for all $g \in G^{\omega}$. The right action of $Z^0_l$ on $ Z_l^1 (P, \Pi^{\omega} )$ is given by $a_{\phi} \mapsto a_{\phi}r_{\phi}$. 
\end{rem}

Let $a, a' \in Z_{r, \alg}^1(P, \Pi^{\omega})(k') $, and suppose that   $a'=ba$  for some $b \in \Pi^{\omega}(k')$ as above, It follows from $(\ref{atwistbyboundary})$ that $a$ and $a'$ differ by a twist by the boundary $b$. Denote by $\alpha, \alpha'$ the corresponding cocycles in 
$Z^1(G^{\omega}, \Pi^{\omega})(k')$, images of $a,a'$ under $(\ref{ZleftPtoZG})$. Then  by $(\ref{atwistbyboundary})$ it follows that
$\alpha'_g = b \alpha_g g(b)^{-1}$.

\begin{rem}  
In our applications, the schemes $P$, $G$, $\Pi$ are all equipped with increasing weight filtrations $W$. The morphisms
$a, \alpha, \beta$  arising from paths  $(\ref{xcgo})$ will respect these filtrations. From now on, our spaces of cocycles will implicitly denote those cocycles which respect the weight filtrations (denoting this by a subscript $W$ would clutter the notation unnecessarily).

In this case, our sufficient condition for $Z_{r, \alg}^1(G^{\omega'}, \Pi^{\omega'})$ to define a scheme, namely, the finite-dimensionality of $W_n (\Or(G^{\omega'}))$ for all $n$, 
will also suffice to ensure that $Z_{r, \alg}^1(P, \Pi^{\omega})$ is a scheme, since $P$ is a $G^{\omega'}$-torsor and so $W_n(\Or(P))$ will also be finite-dimensional. Therefore $Z_{r, \alg}^1(P, \Pi^{\omega})$ will be a closed subscheme of 
$$\mathrm{Hom}_W( \Or(\Pi^{\omega'}), \Or(G^{\omega'})) \times \mathrm{Hom}_W( \Or(\Pi^{\omega}), \Or(P))\ . $$
\end{rem} 
 
\subsection{} Now  let $\Pi$ be an affine group scheme in $\mathcal{H}$, given by a quotient of ${}_0\Pi_0$. The image of the bicocycle $(\ref{abicocycledef})$ corresponding to a point $x$ and the paths $\xgo \in {}_0\Pi_0(k')$, $\xco \in {}_0\Pi_0(k)$,  define   algebra homomorphisms $a^x,\alpha^x,\beta^x$:
\begin{eqnarray} 
\alpha^x:  \Or(\Pi^{\omega}) & \To&   \Pe_{X}^{\omega, \omega}  \nonumber   \\
a^x : \Or(\Pi^{\omega}) & \To&   \Pe_{X}^{\omega', \omega}   \otimes_k k'\nonumber  \\
\beta^x:  \Or(\Pi^{\omega'}) & \To&   \Pe_{X}^{\omega', \omega'} \otimes_k k'  \nonumber  
\end{eqnarray} 
since $a$ is dual to $\Or(\Pi^{\omega})  \otimes_k k' \rightarrow \Or(P) \otimes_k k'= \Pe_{\mathcal{H}}^{\omega', \omega}\otimes_k k'$, and its image lands in the subring spanned by the periods of
the fundamental groupoid of $X$.  The cocycle conditions above can be translated into commutative diagrams which we shall omit. 
The morphisms $a, \alpha,  \beta$   respect the weight filtrations.

 \subsection{Frobenius-invariant paths}   We can reduce the size of the space of bitorsor cocycles as  follows. Suppose that 
 $\xgo \in {}_x \Pi^{\omega'}_0(k')$ is invariant under the action of an element (`Frobenius') $F \in G^{\omega'}(k')$, for
 some $k$-algebra $k'$. In this case, the map
 $$a: \Or(\Pi^{\omega}) \To \Pe^{\omega',\omega}_X\otimes_k k'$$
 lands in the $F$-invariant subspace of $ \Pe^{\omega',\omega}_X \otimes_k k'$.
 Suppose that the trivial path  $1\in {}_0 \Pi^{\omega'}_0(k')$ is the unique $F$-invariant element. 
 Then the  cocycle $\beta$  corresponding to ${}_x \gamma_0$ lies in the space $\beta \in Z_{F,\alg}^1(G^{\omega'},  \Pi^{\omega'})(k')$
of cocyles which are  trivial on  $F \in G^{\omega'}(k')$ and, by the same argument as  the first paragraph of \S\ref{sectskipped}  we have
\begin{equation} \label{Z1isH1Finv}
 Z_{F,\alg}^1(G^{\omega'},  \Pi^{\omega'}) \overset{\sim}{\To}  H_{F,\alg}^1(G^{\omega'},  \Pi^{\omega'})\ .
 \end{equation}
 where the subscript $F$ denotes elements which are trivial on $F\in G^{\omega'}(k')$.
 This certainly applies in the case when $\omega'$ is the crystalline  fiber functor (or de Rham by transporting $F$),  and $\xgo$ is the unique Frobenius invariant path \cite{Besser} \S1.5.2.
 
In the case when $\omega'$ is the Betti realisation corresponding to a real embedding of $k$, we can choose $\xgo$ to be invariant under complex conjugation (invariant under the real Frobenuis $F_{\infty}$).  In this situation we merely
deduce that the target space of the maps $a^x,\beta^x$ lie in the subspace of real-Frobenius invariant periods.

  \subsection{Canonical de Rham paths}  \label{sectHodgepath} We now explain how to choose the path $\xco$ in the case when $\omega$ is the de Rham fiber functor, by exploiting the Hodge filtration.
  
  \subsubsection{Hodge filtration} 
   The affine ring of  the de Rham fundamental groupoid carries a natural Hodge filtration, denoted $F^n$.
   The left  coaction dual to the  right-torsor structure $ \xpio \times \opio \rightarrow \xpio$ respects the Hodge filtration, i.e., 
   $$\Delta F^n \Or(\xpio) \subset F^n (\Or(\opio) \otimes_k \Or(\xpio) )\ .$$
   Since  $ F^0\Or({}_{\bullet}\Pi^{\dr}_0)= \Or({}_{\bullet}\Pi^{\dr}_0)$ for $\bullet \in \{x,0\}$, this implies that
   \begin{equation} \label{DeltaFn} \Delta F^n \Or(\xpio) \subset  \sum_{p+q=n, p,q\geq0 }  F^p \Or(\opio) \otimes_k F^q \Or(\xpio) \ .
   \end{equation}
   In particular, $\Delta F^1 \subset F^0 \otimes F^1 + F^1 \otimes F^0$. We can define 
   \begin{equation}\label{FnPidef} F^{n-1} {}_\bullet \Pi^{\dr}_0 = \Spec \,\big( \Or({}_\bullet \Pi^{\dr}_0)/F^{n}\big) \qquad \hbox{ for } \bullet = x, 0
   \end{equation}
  because $F^i. F^j \subset F^{i+j}$ and hence $F^n$ defines  an ideal in $\Or({}_\bullet \Pi^{\dr}_0)$. By $(\ref{DeltaFn})$
  $$\Delta  \Or(\xpio)/F^1   \quad   \subset  \quad \Or(\opio)/F^1 \otimes_k   \Or(\xpio)/F^1$$
  and by a similar equation with $x$ replaced by $0$, we deduce that $F^0 {}_0 \Pi^{\dr}_0  $ is a closed subgroup of $  {}_0 \Pi^{\dr}_0 $.
  By contrast with  \cite{KimAlb}, our $F^n$ are closed subschemes but are not subgroups for $n \geq 1$.  Furthermore, the subgroup $F^0$ is not  in general   normal.

 \begin{lem}  \label{lemF0tors} Multiplication gives a structure of a trivial right-torsor 
\begin{equation} \label{F0torseqn} F^0 {}_x \Pi^{dR}_0  \times F^0 {}_0 \Pi^{dR}_0   \To F^0 {}_x \Pi^{dR}_0 \ \ . 
\end{equation}  
In particular, there exists a rational point in $F^0 {}_x \Pi^{dR}_0 (k)$. 
\end{lem}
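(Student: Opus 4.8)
The plan is to obtain $(\ref{F0torseqn})$ by restricting the general right‑torsor structure $(\ref{Generaltorsor})$, in the realisation $\omega=dR$ with $y=x$ and base point $0$, to the closed subschemes $F^0$ defined in $(\ref{FnPidef})$, and then to deduce triviality for the usual pro‑unipotent reason. First I would check that multiplication restricts. The coaction $\Delta\colon\Or({}_x\Pi^{dR}_0)\To\Or({}_0\Pi^{dR}_0)\otimes_k\Or({}_x\Pi^{dR}_0)$ dual to $(\ref{Generaltorsor})$ respects the Hodge filtration; since $F^0\Or({}_\bullet\Pi^{dR}_0)=\Or({}_\bullet\Pi^{dR}_0)$ for $\bullet\in\{x,0\}$, the weight‑one instance of $(\ref{DeltaFn})$ reads $\Delta(F^1)\subseteq \Or({}_0\Pi^{dR}_0)\otimes F^1+F^1\otimes\Or({}_x\Pi^{dR}_0)$, so $\Delta$ descends modulo $F^1$, i.e.\ multiplication sends $F^0{}_x\Pi^{dR}_0\times F^0{}_0\Pi^{dR}_0$ into $F^0{}_x\Pi^{dR}_0$. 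Combined with the fact, established just above, that $F^0{}_0\Pi^{dR}_0$ is a closed subgroup of ${}_0\Pi^{dR}_0$, this gives a right action.

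To see that this action is a torsor, i.e.\ that $(q,p)\mapsto(q,q.p)$ is an isomorphism in the sense of $(\ref{torsordef})$, I would exhibit its inverse $(q,r)\mapsto(q,q^{-1}.r)$ and check that it too restricts to the $F^0$‑subschemes. Path‑reversal ${}_x\Pi^{dR}_0\To{}_0\Pi^{dR}_x$ and composition $(\ref{Picomppaths})$ in the form ${}_0\Pi^{dR}_x\times{}_x\Pi^{dR}_0\To{}_0\Pi^{dR}_0$ are morphisms in $\mathcal{H}$, hence their duals are strict for $F$ and, again because $F^0$ is the whole ring, preserve the quotients modulo $F^1$; therefore $q^{-1}.r\in F^0{}_0\Pi^{dR}_0$ whenever $q,r\in F^0{}_x\Pi^{dR}_0$. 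This makes $(\ref{F0torseqn})$ a (pseudo‑)torsor.

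It remains to produce a rational point. Since $F^0{}_0\Pi^{dR}_0$ is a closed subgroup of the pro‑unipotent group ${}_0\Pi^{dR}_0$, it is itself pro‑unipotent, and a torsor under a pro‑unipotent affine group scheme over a field has a rational point as soon as it is non‑empty (the vanishing of $H^1(\mathrm{Gal}(\overline k/k),\G_a)$, as recalled in \S\ref{sectunip}). Non‑emptiness I would obtain from $\gr^W_0$: one has $W_{-1}\Or({}_x\Pi^{dR}_0)=0$ and $\gr^W_0\Or({}_x\Pi^{dR}_0)\cong k$, a polarizable pure object of weight $0$ and rank $1$, hence of Hodge type $(0,0)$, so that $F^1\gr^W_0=0$. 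Because $W_{-1}=0$, this means $F^1\Or({}_x\Pi^{dR}_0)\cap W_0\Or({}_x\Pi^{dR}_0)=0$, so the line of constants $W_0\Or({}_x\Pi^{dR}_0)=k\cdot1$ injects into $\Or({}_x\Pi^{dR}_0)/F^1$; in particular $\Or({}_x\Pi^{dR}_0)/F^1$ is a non‑zero $k$‑algebra, which acquires a point over some field extension of $k$ and hence is fpqc‑locally non‑empty. This completes the plan.

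The only genuinely delicate ingredient is the one invoked repeatedly above, namely that the entire groupoid structure — composition $(\ref{Picomppaths})$ and path‑reversal, not merely the isolated torsor coaction recorded in $(\ref{DeltaFn})$ — respects the Hodge filtration; this is forced by strictness of morphisms in $\mathcal{H}$ for $F$, so it is bookkeeping rather than mathematics. The substantive content of the lemma is therefore just the existence of the rational point in $F^0{}_x\Pi^{dR}_0$, and even that reduces to the elementary Hodge‑theoretic observation that $\gr^W_0\Or({}_x\Pi^{dR}_0)\cong\Q(0)$ is killed by $F^1$.
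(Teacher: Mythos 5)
Your proof is correct, and it reaches the lemma by a route that agrees with the paper's up to the last two steps but diverges at the key one. Where you differ is in proving that the restricted action map $(q,p)\mapsto(q,q.p)$ is an isomorphism: you construct its inverse $(q,r)\mapsto(q,q^{-1}.r)$ explicitly, which forces you to invoke the compatibility of path-reversal (the antipode) and of the composition ${}_0\Pi^{dR}_x\times{}_x\Pi^{dR}_0\to{}_0\Pi^{dR}_0$ with the Hodge filtration. That is true, but it is an extra input: it rests on the assertion that the \emph{entire} Hopf algebroid structure of Beilinson's construction is geometric, a point the paper itself flags as not fully documented in the literature. The paper's proof avoids the antipode altogether: it applies strictness of $F$ to the single torsor isomorphism $(\id\otimes m)(\Delta\otimes\id):\Or({}_x\Pi^{dR}_0)^{\otimes 2}\overset{\sim}{\to}\Or({}_0\Pi^{dR}_0)\otimes\Or({}_x\Pi^{dR}_0)$, notes that it preserves the subspace $\Or\otimes F^1$ (using $\Delta F^1\subset F^1\otimes F^0+F^0\otimes F^1$ and multiplicativity of $F$), and then passes to $\gr^0_F$ of both sides to obtain the dual of $(\ref{F0torseqn})$ together with its bijectivity in one stroke. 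So the paper's argument uses only the coaction and the product, which is slightly more economical in its inputs; yours buys a more concrete, pointwise description of the inverse. Your treatment of the rational point is the same as the paper's (pro-unipotence of $F^0{}_0\Pi^{dR}_0$ plus vanishing of $H^1(\mathrm{Gal}(\overline{k}/k),\G_a)$), and you are in fact more careful than the paper in verifying non-emptiness via $W_0\cap F^1=0$, which the paper leaves implicit.
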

\begin{proof}  By the torsor property $(\ref{Generaltorsor})$, the isomorphism
$$ \Or({}_x \Pi^{dR}_0) \otimes_k \Or({}_x \Pi^{dR}_0) \overset{\Delta \otimes \id}{\To} \Or({}_0 \Pi^{dR}_0) \otimes_k \Or({}_x \Pi^{dR}_0)^{\otimes 2} \overset{\id \otimes m}{\To} \Or({}_0 \Pi^{dR}_0) \otimes_k \Or({}_x \Pi^{dR}_0)$$
induces  an isomorphism  $\Or \otimes F^1 \overset{\sim}{\rightarrow} \Or \otimes F^1$ and hence 
$$  \Or({}_x \Pi^{dR}_0) \otimes_k \gr^0_F \Or({}_x \Pi^{dR}_0) \overset{\sim}{\To} \Or({}_0 \Pi^{dR}_0) \otimes_k \gr^0_F \Or({}_x \Pi^{dR}_0)\ .$$
It respects the Hodge filtration on both sides. By strictness, it induces an isomorphism on the associated $\gr^0_F$ of both sides of the previous equation. This gives an isomorphism  
$$ \gr^0_F  \Or({}_x \Pi^{dR}_0) \otimes_k \gr^0_F \Or({}_x \Pi^{dR}_0)  \overset{\sim}{\To}  \gr^0_F \Or({}_0 \Pi^{dR}_0) \otimes_k \gr^0_F \Or({}_x \Pi^{dR}_0)\ .$$
This proves $(\ref{F0torseqn})$ and implies that it is a torsor. To show that it is trivial, use the fact that  the affine ring of $F^0 {}_0 \Pi^{dR}_0$ is a connected  filtered 
Hopf algebra (e.g., by the weight filtration), and so  $F^0 {}_0 \Pi^{dR}_0$ is pro-unipotent. Any non-empty torsor over a pro-unipotent affine group scheme admits a rational point.
 \end{proof}

\subsubsection{The affine scheme $\PF$}
  
  \begin{defn} \label{Hdefn}
  Let ${}_xH_0$ denote  the largest subalgebra of 
   $\Or({}_x \Pi^{dR}_0)$ such that:

   $(i)$. $W_0  \, {}_xH_0  \cong W_0 \Or({}_x \Pi^{dR}_0) \cong  k  ,$
   
   $(ii)$.  ${}_xH_0$ is stable under the coaction $\Delta: {}_xH_0 \rightarrow \Or({}_0 \Pi^{dR}_0) \otimes_{k} {}_xH_0  $,

   $(iii)$. ${}_xH_0   \subset F^1 \Or({}_x \Pi^{dR}_0)+ W_0\, {}_xH_0 $. 
  
 \noindent
 Since $W_0 \cap F^1 \Or({}_x \Pi^{dR}_0)= 0$, the sum in the right-hand side of $(iii)$ can be replaced with a direct sum.
  Likewise, define ${}_0H_0$ by relacing $x$ with $0$ in the above definition.  It is not necessarily a Hopf algebra, only a  left $ \Or({}_0 \Pi^{dR}_0)$-comodule algebra. 
    \end{defn}
  
 \begin{defn}  For $\bullet \in \{0,x\}$ let us define an affine scheme 
  $$     {}_{\bullet} \PF^{dR}_0  = \Spec \, \big( {}_{\bullet} H_0 )\ .$$
  \end{defn}
    There is a natural morphism of schemes 
 ${}_\bullet \Pi^{dR}_0 \rightarrow    {}_{\bullet}\PF^{dR}_0 $
   and condition $(ii)$ implies that  the following diagram commutes
$$
\begin{array}{ccccc}
  {}_\bullet \Pi^{dR}_0  &\times  &{}_0\Pi^{dR}_0 &  \To  &{}_\bullet \Pi^{dR}_0   \\
\downarrow&& \downarrow   &   & \downarrow   \\
    {}_{\bullet}\PF^{dR}_0 &  \times  & {}_0 \Pi^{dR}_0   & \To  &     {}_{\bullet}\PF^{dR}_0
\end{array}
$$
In other words,  $   {}_{\bullet}\PF^{dR}_0$ is stable under the right-action of ${}_0 \Pi^{dR}_0$.

There is an identical  construction for any affine groupoid scheme $\Pi$ in the category $\mathcal{H}$  which is a  quotient of ${}_0 \Pi_0$ (e.g.,  $\Pi={}_0 \Pi_0/W_{m}$), which we again denote by $\PF^{dR}$.

\begin{ex}  \label{extensorsF1} Suppose that $\Or({}_{\bullet} \Pi^{dR}_0)$  is isomorphic to the tensor coalgebra on a vector space $V$ and that the coaction is given by deconcatenation of tensors, e.g. $(\ref{pi1asTc})$. Then ${}_{\bullet}H_0$ is  the subspace generated by  $V\otimes \ldots \otimes V \otimes F^1 V$.  It is strictly contained in $F^1 \Or({}_{\bullet} \Pi^{dR}_0)$, which is the subspace generated by $V^{\otimes m} \otimes F^1 V \otimes V^{\otimes n}$. 
\end{ex}

The reader is warned that since the Hodge filtration is not motivic, the scheme $ {}_{\bullet} \PF^{dR}_0$ does not admit an action by the de Rham Galois group  $G^{dR}$.  Equivalently ${}_\bullet H_0$ is not stable under  the  coaction by its affine ring $\Or(G^{dR})$.

   \subsubsection{Canonical de Rham path}  \label{sectCanpath}
   
   By definition  $\ref{Hdefn}$,  there is a natural map
$$\varepsilon: {}_xH_0 \To {}_xH_0 /F^1 {}_xH_0 \overset{(iii)}{ \cong} W_0 \,  {}_xH_0 \overset{(i)}{\cong} k \  $$
which is a homomorphism.      It defines a point $\Spec k \rightarrow  {}_x \PF^{dR}_0$.
\begin{defn} Define the  \emph{canonical path}  ${}_x 1_0  \in {}_x \PF^{dR}_0(k)$ to be this point. 
\end{defn} 
 Consider the composition of the coaction (definition \ref{Hdefn} $(iii)$) 
 $$\Delta : {}_x H_0 \To \Or({}_0 \Pi^{dR}_0 ) \otimes_k {}_x H_0 $$
 with $\id \otimes {}_x1_0$. It  yields a homomorphism we denote by ${}_x1_0 : {}_x H_0 \rightarrow \Or({}_0 \Pi^{dR}_0 ) $.
 \begin{lem}  \label{lemx10canisom} It defines a canonical  isomorphism of algebras
 $${}_x1_0 : {}_x H_0 \overset{\sim}{\To} {}_0 H_0 \ .$$
  \end{lem}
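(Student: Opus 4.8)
The claim is that the map ${}_x1_0 : {}_xH_0 \to {}_0H_0$, obtained by composing the coaction $\Delta : {}_xH_0 \to \Or({}_0\Pi^{dR}_0)\otimes_k {}_xH_0$ with $\id\otimes{}_x1_0$, lands in the subalgebra ${}_0H_0 \subseteq \Or({}_0\Pi^{dR}_0)$ and is an isomorphism of algebras. The strategy is to mimic the torsor argument of \S\ref{sectTorsors} and \S\ref{sectWeightFilt}, but carrying along \emph{both} the weight filtration $W$ and the Hodge filtration $F$, and then to characterise the image intrinsically by the defining properties $(i),(ii),(iii)$ of Definition \ref{Hdefn}. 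First I would record that $\Delta$ is an algebra homomorphism (it is the coaction dual to the right-torsor structure $(\ref{dRTorsor})$, which is a morphism of schemes), so ${}_x1_0$ is a homomorphism; the point ${}_x1_0 \in {}_x\PF^{dR}_0(k)$ is itself an algebra map ${}_xH_0 \to k$ by construction, and coassociativity of $\Delta$ gives the usual counit-type identity $(\id\otimes\varepsilon)\Delta = \id$ on ${}_0H_0$ side and lets one write down a two-sided inverse candidate.

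\textbf{Key steps.} (1) Produce the inverse. By Lemma \ref{lemF0tors} (or directly from the torsor property $(\ref{Generaltorsor})$ together with strictness of morphisms in $\mathcal{H}$ with respect to $W$ and $F$) there is a rational point $p_x \in F^0\,{}_0\Pi^{dR}_x(k)$, equivalently an algebra homomorphism $\Or({}_0\Pi^{dR}_x) \to k$ respecting the filtrations; composing the coaction $\Or({}_0\Pi^{dR}_0) \to \Or({}_0\Pi^{dR}_x)\otimes_k \Or({}_0\Pi^{dR}_0)$ dual to ${}_0\Pi^{dR}_x \times {}_0\Pi^{dR}_0 \to {}_0\Pi^{dR}_0$ with $p_x\otimes\id$ defines a candidate homomorphism $g : \Or({}_0\Pi^{dR}_0) \to \Or({}_x\Pi^{dR}_0)$ going the other way. (2) Check ${}_x1_0$ maps ${}_xH_0$ into ${}_0H_0$: property $(ii)$ for ${}_xH_0$ (stability under $\Delta$) plus coassociativity show the image is stable under the ${}_0\Pi^{dR}_0$-coaction, hence satisfies $(ii)$; property $(iii)$ transports because $\Delta$ respects the Hodge filtration, $(\ref{DeltaFn})$, and ${}_x1_0 : {}_xH_0 \to k$ kills $F^1$ while being the identity on $W_0 \cong k$ — concretely, $(\ref{DeltaFn})$ with $n=1$ gives $\Delta F^1\,{}_xH_0 \subset F^0\Or({}_0\Pi^{dR}_0)\otimes F^1\,{}_xH_0 + F^1\Or({}_0\Pi^{dR}_0)\otimes F^0\,{}_xH_0$, and applying $\id\otimes{}_x1_0$ sends the first summand to $0$ and the second into $F^1\Or({}_0\Pi^{dR}_0)$, while $W_0$ maps to $W_0 \cong k$ by $(i)$; so the image lies in $F^1 + W_0$, i.e. satisfies $(iii)$; property $(i)$ is automatic since ${}_x1_0$ restricted to $W_0\,{}_xH_0$ is the identity $k\to k$. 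By maximality of ${}_0H_0$ the image is contained in ${}_0H_0$. (3) Symmetrically $g$ maps ${}_0H_0$ into ${}_xH_0$ (using that $p_x$ respects $F^0$). (4) Show ${}_x1_0$ and $g$ are mutually inverse: this is the standard groupoid identity ${}_x1_0\,.\,p_x = {}_x1_x = \mathrm{id}$ at the level of the $F^0$-torsor, dualised; equivalently apply coassociativity and the counit identities. One can also verify it after extending scalars by a faithfully flat $W$-strict argument as in \S\ref{sectWeightFilt} — on each $W_n$ graded piece both composites are the identity by strictness and the fact that $\gr^W_0 = k$ on which everything is forced.

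\textbf{Main obstacle.} The subtle point is step (2)–(3): unlike the weight filtration, $F$ is not motivic and ${}_\bullet H_0$ is \emph{defined} as a maximal subalgebra with properties $(i)$–$(iii)$ rather than being cut out by a coaction of a Galois group, so one cannot simply invoke Tannakian strictness. The argument must instead be: show the image of ${}_xH_0$ under ${}_x1_0$ genuinely satisfies all three properties (the computation with $(\ref{DeltaFn})$ above is the crux, and one must be careful that $W_0 \cap F^1 = 0$ so the decomposition in $(iii)$ is a genuine direct sum, as noted after Definition \ref{Hdefn}), whence it lies in ${}_0H_0$ by maximality; then the symmetric inclusion via $g$, together with $g\circ{}_x1_0 = \id$ and ${}_x1_0\circ g = \id$, forces equality. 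Verifying that $g$ respects $(iii)$ requires knowing $p_x \in F^0$, which is exactly what Lemma \ref{lemF0tors} supplies; so the proof reads cleanly as: \emph{invoke Lemma \ref{lemF0tors} for the inverse point, transport $(i)$–$(iii)$ across using $(\ref{DeltaFn})$ and coassociativity, conclude by maximality and the groupoid inverse identity.}
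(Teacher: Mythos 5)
Your argument is correct in substance and rests on the same three ingredients as the paper's proof: Lemma \ref{lemF0tors} (existence of a rational point of $F^0\,{}_x\Pi^{dR}_0$), the containment $\Delta F^1 \subset F^1\otimes F^0 + F^0\otimes F^1$ combined with the fact that such a point annihilates $F^1$, and the intrinsic characterisation of ${}_\bullet H_0$ by conditions $(i)$--$(iii)$ of Definition \ref{Hdefn}. The packaging differs. The paper fixes a single lift $c\in F^0\,{}_x\Pi^{dR}_0(k)$ of the canonical point and proves that the one map $(\id\otimes c)\Delta:\Or({}_x\Pi^{dR}_0)\to\Or({}_0\Pi^{dR}_0)$ preserves $F^1$, is an isomorphism (torsor property plus strictness, since it is an isomorphism on $\gr_F$), and commutes with the left $\Or({}_0\Pi^{dR}_0)$-coaction; the two algebras characterised by Definition \ref{Hdefn} are then carried isomorphically onto one another in one stroke, and canonicity is automatic because the restriction of $(\id\otimes c)\Delta$ to ${}_xH_0$ equals $(\id\otimes{}_x1_0)\Delta$ independently of $c$. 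Your two-inclusions-plus-explicit-inverse organisation is a valid alternative (your step (2), showing the image satisfies $(i)$--$(iii)$ and invoking maximality, is sound), but it has one soft spot at step (4): ${}_x1_0$ is a point of ${}_x\PF^{dR}_0$, not of ${}_x\Pi^{dR}_0$, so the ``groupoid identity ${}_x1_0\,.\,p_x={}_x1_x$'' does not literally parse. If $p_x\in F^0\,{}_0\Pi^{dR}_x(k)$ is chosen independently of ${}_x1_0$, the composite $g\circ{}_x1_0$ is dual to left translation by an element of $F^0\,{}_0\Pi^{dR}_0(k)$, and the assertion that this acts as the identity on ${}_0H_0$ is an extra claim requiring proof. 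The repair is exactly the paper's device: take $p_x=c^{-1}$ for $c$ a lift of ${}_x1_0$ furnished by Lemma \ref{lemF0tors} (inversion preserves $F^0$, being a morphism in $\mathcal{H}$), or equivalently just invert the global isomorphism $(\ref{inproofidc})$. With that repair, and after straightening out the coaction in your step (1) (you want the map dual to the groupoid composition ${}_0\Pi^{dR}_x\times{}_x\Pi^{dR}_0\to{}_0\Pi^{dR}_0$, landing in $\Or({}_x\Pi^{dR}_0)\otimes_k\Or({}_0\Pi^{dR}_x)$, not the torsor structure on ${}_0\Pi^{dR}_x$), your proof is complete.
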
  
 \begin{proof} By lemma \ref{lemF0tors}, we can choose a point $c \in F^0 {}_x \Pi^{dR}_0(k)$. The coaction 
 $$\Delta : \Or({}_x \Pi^{dR}_0) \To \Or({}_0 \Pi^{dR}_0) \otimes_k \Or({}_x \Pi^{dR}_0)$$
 satisfies $\Delta F^1 \subset F^1 \otimes F^0 + F^0 \otimes F^1$. But $c$ annihilates $F^1\Or({}_x \Pi^{dR}_0)$, therefore 
 \begin{equation} \label{inproofidc} (\id \otimes c) \Delta:  F^1 \Or({}_x \Pi^{dR}_0) \To F^1\Or({}_0 \Pi^{dR}_0)\ .
 \end{equation} 
 By the torsor property, and strictness, $(\ref{inproofidc})$ is an isomorphism since it induces an isomorphism on the associated graded for the Hodge filtration. Furthermore, by coassociativity of $\Delta$, $(\ref{inproofidc})$ is compatible with the left-coaction by 
 $\Or({}_0 \Pi^{dR}_0)$ (on  points, $(\ref{inproofidc})$ corresponds to $g\mapsto cg : {}_0 \Pi^{dR}_0 \overset{\sim} {\rightarrow} {}_x \Pi^{dR}_0$, which respects right-multiplication by $ {}_0 \Pi^{dR}_0$ since $ga \mapsto cga$).  The algebras ${}_x H_0$ and ${}_0 H_0$ are  characterised by definition $\ref{Hdefn}$ and are  therefore  mapped isomorphically onto each other by $(\ref{inproofidc})$.  \end{proof}
 
  \begin{rem} In the case when $\Or({}_\bullet \Pi^{dR}_0)$  has Hodge numbers of type $(p,p)$ only, then  definition $\ref{Hdefn}$  gives  ${}_\bullet H_0 = \Or({}_\bullet \Pi^{dR}_0)$.
The natural map ${}_\bullet\Pi^{dR}_0 \rightarrow {}_\bullet\PF^{dR}_0$ is then an  isomorphism, and the  path ${}_x 1_0$ is an element of ${}_\bullet \Pi^{dR}_0(k)$.  
  Thus ${}_x1_0$ is the natural generalisation of the canonical de Rham path to the non mixed-Tate setting. 
   \end{rem}

 \subsection{Comparison with  Albanese manifolds}  \label{HainAlba}
 Since $F^1 \Or({}_{\bullet} \Pi_0^{dR})$ is the ideal of functions vanishing on $F^0 {}_{\bullet} \Pi_0^{dR}$   by $(\ref{FnPidef})$, we make the following definition.
 
 \begin{defn} For $\bullet \in \{x,0\}$, define an affine scheme over $k$
 $$F^0 \backslash {}_{\bullet} \Pi_0^{dR} = \Spec (W_0 +F^1 \Or ( {}_{\bullet} \Pi_0^{dR}) )\ .$$
 It follows from the earlier properties of the Hodge filtration that $W_0+F^1$ is indeed an algebra. It is not the group-theoretic quotient since $F^0$ is not a normal subgroup. Its complex points are the  manifolds defined by Hain \cite{HaAlb}. 
 \end{defn}

 The scheme $F^0 \backslash {}_{\bullet} \Pi_0^{dR}$ is equipped with a canonical point 
 given by the composition: 
  $$ \Or(F^0 \backslash {}_{\bullet} \Pi_0^{dR}) \To  \Or(F^0 \backslash {}_{\bullet} \Pi_0^{dR}) /F^1  = W_0 \Or ( {}_{\bullet} \Pi_0^{dR}) \cong  k $$
  since $W_0 \cap F^1 =0$. 
 The inclusions of spaces ${}_{\bullet}H_0 \subset  (W_0+F_1) \Or ( {}_{\bullet} \Pi_0^{dR}) \subset \Or ( {}_{\bullet} \Pi_0^{dR}) $, together with the previous remark, gives rise to the 
 following commutative diagram:
   $$
\begin{array}{ccccc}
F^0 {}_{\bullet} \Pi^{dR}_0   & \To   & \Spec(k)   & = &  \Spec(k)   \\
\downarrow   &   & \downarrow &    & \downarrow \\
 {}_{\bullet} \Pi^{dR}_0    &  \To   &   F^0 \backslash {}_{\bullet} \Pi_0^{dR}   & \To & {}_{\bullet} \PF^{dR}_0
\end{array}
$$  
 In particular,  the image of  any path in  $F^0 {}_{\bullet} \Pi^{dR}_0 (k)$ is the canonical path ${}_x1_0 \in  {}_{\bullet} \PF^{dR}_0 (k)$.
 
\begin{rem} Let $N^0$ denote the normaliser of $F^0$ in ${}_0 \Pi^{dR}_0$. Then the quotient $N^0 \backslash {}_0 \Pi^{dR}_0$ is the affine group scheme  whose affine ring is the largest Hopf subalgebra $A \subset (W_0 +F^1)(\Or(  {}_0 \Pi^{dR}_0))$. Therefore $\Delta A \subset A \otimes_k A$, and it follows that $A\subset {}_0 H_0$ by definition $\ref{Hdefn}$.  Our space ${}_0\PF^{dR}_0$ therefore satisfies
$$  F^0 \backslash {}_{0} \Pi_0^{dR}  \To {}_0\PF^{dR}_0 \To N^0 \backslash {}_{0} \Pi_0^{dR} $$
but in general neither morphism is an isomorphism.  Indeed, in example \ref{extensorsF1}  the algebra $A$ is the tensor algebra on $F^1V$ and consists of `totally holomorphic' iterated integrals. 
\end{rem} 

\begin{warning} \label{Caveat} A point $c \in F^0 {}_x\Pi^{dR}_0(k)$ defines, via $(\ref{inproofidc})$, an isomorphism  
$$F^0 \backslash {}_0 \Pi^{dR}_0  \overset{\sim}{\To} F^0 \backslash {}_x \Pi^{dR}_0$$
but this isomorphism is  not canonical: it depends on the choice of point $c$.   
On the other hand, we have shown that the schemes 
$ {}_0 \PF^{dR}_0$ and $ {}_x \PF^{dR}_0$ are canonically isomorphic.
\end{warning} 
      
   \subsection{General situation} Let  $\omega$ be the de Rham fiber functor. Let $n$ be a positive integer. 
  Then   
  $W_n \Or({}_x \Pi^{\dr}_0) $ 
   is the fiber at $x\in X(k)$ of an object in the category of unipotent vector bundles on $X$. There exists an open affine subset $U \subset X$, containing $0$,
   such that the underlying algebraic  vector bundle $W_n \Or(\Pi_0)$ (forgetting its connection) is trivial on $U$. Therefore there is a canonical isomorphism  for all  $x \in U(k)$:
   $$\xco:  W_n \Or({}_x \Pi^{\dr}_0) \cong  \Gamma(U,  W_n \Or(\Pi_0) ) \cong  W_n \Or({}_0 \Pi^{\dr}_0)\ . $$
    Now  choose a path $\xgo \in {}_x \Pi^{\dr'}_0(k')$ for  $x \in U(k)$. 
   The  map $a^x$   is dual to 
  \begin{eqnarray} 
a^x :  W_n \Or({}_0 \Pi^{\dr}_0)  & \To &  \Pe^{dR, \omega'}_{X} \otimes_k k'   \\
  \eta & \mapsto&  [ \Or(\pi_1(X,x,0)), {}_x \gamma_0, {}_xc_0^{-1} \eta ]^{dR, \omega'} \nonumber 
    \end{eqnarray}
   By applying the method  of \S\ref{sectLinAlg}, this can be used to construct `motivic' Coleman functions and hence detect points in $U(k)$ whenever the requisite inequality on dimensions 
   is satisfied. If $\omega'$ is the Betti fiber functor and $U(\C)$ is simply-connected, then the class of 
   ${}_x \gamma_0$ is uniquely defined. In the case when $\omega'$ is de Rham or crystalline, we can ask that the path  ${}_x \gamma_0$ be Frobenius-invariant, and again its class is canonical.   In the crystalline  case, the  $p$-adic period is given precisely by Coleman integration  by \cite{BesserTannaka}.
   
 This method should in principle work well enough to bound points on local affine charts $U$ of $X$. In order to eliminate this dependence on $U$, we use the canonical de Rham paths on the schemes $\PF$ as defined above. This gives:
    \begin{eqnarray}  \label{acan}
a^x :  {}_0 H_0   & \To &  \Pe^{dR, \omega'}_{X} \otimes_k k'   \\
  \eta & \mapsto&  [ \Or(\pi_1(X,x,0)), {}_x \gamma_0,  {}_x1_0^{-1} \eta ]^{dR, \omega'} \nonumber 
    \end{eqnarray}
  where ${}_0H_0 $ is canonically identified with ${}_x H_0 \subset \Or({}_x\Pi_0^{dR})$ via  lemma \ref{lemx10canisom}. 
 The point is that one only computes iterated integrals of forms in the subspace ${}_0 H_0 \subset \Or({}_0\Pi_0^{dR})$.
 When $\omega'$ is de Rham or crystalline, and ${}_x \gamma_0$ is the unique Frobenius-invariant path, then the  homomorphism $(\ref{acan})$
 depends on no choices. In this case its image is contained  in the subspace of Frobenius-invariant periods. 
 
  \subsection{Dimensions}    We now wish to apply   the construction of \S\ref{sect: det} in the situation where $\omega$ and $\omega'$ are de Rham and crystalline
  fiber functors, respectively.    Let  $\xgo \in {}_x\Pi_0^{\omega'}(k)$ be the unique Frobenius-invariant path.

Let us    choose  a path  $\xco \in F^0 {}_x\Pi^{dR}_0(k)$ in the following way. 
The action of $G^{dR}$ on the de Rham image of the Tate object $\Q(-1)$ defines a character $\chi: G^{dR} \rightarrow \G_m$. 
By a version of Levi's theorem, it is possible to choose a splitting of this map over $k$, which in turn induces a splitting of the weight filtration on $\omega_{dR}(M)$ for all objects $M$ in $\mathcal{H}$, in such a way that it is compatible with the Hodge filtration (in fact one can split the Hodge and weight filtrations simultaneously).  Thus for every object $M$ in $\mathcal{H}$ we have fixed a functorial isomorphism 
$M \cong \gr^W M$.
Applying this to the Ind object $\Or({}_x \Pi_0^{dR})$, we obtain a homomorphism by projecting onto weight zero:
$$\xco : \Or({}_x \Pi_0^{dR}) \overset{\sim}{\To} \gr^W \Or({}_x \Pi_0^{dR})  \To \gr^W_0 \Or({}_x \Pi_0^{dR})\cong k$$
for any point  $x$.  Since this is compatible with $F$, we have $ F^1   \Or({}_x \Pi_0^{dR}) \subset \ker \xco$ and therefore 
$\xco \in F^0 {}_x\Pi^{dR}_0(k)$. The  image  of  $\xco$ in ${}_x\PF^{dR}_0(k)$ is the canonical point ${}_x1_0$. 
  
  This data provides homomorphisms
  \begin{eqnarray}
  \alpha^x :  \Or({}_0\Pi^{dR}_0)  &  \To   & \Pe^{dR, dR}_{X}  \nonumber \\ 
  a^x :    \Or({}_0\Pi^{dR}_0)  &  \To   & \Pe^{dR, \omega'}_{X} \otimes_k k' \nonumber \\
  \beta^x: \Or({}_0\Pi^{\omega'}_0) & \To &  \Pe^{\omega', \omega'}_{X} \nonumber
  \end{eqnarray} 
  where $\beta^x$ is cocycle associated to $\xgo$.  Note that although $  a^x$ depends on  our choice of weight-splitting via $\xco$, we shall only consider its restriction  to the subspace ${}_0H_0$, which  is canonical.  In particular, the
  data $ (\alpha_x,  (a^x, \beta^x))$  defines an element of 
  $$Z^1_{\alg}( G^{dR},  \Pi^{dR}) \times Z^1_{r, \alg} ( P, {}_0\Pi_0^{\omega})(k')$$ where $P$ is the scheme of isomorphisms from $dR$ to $\omega'$. 
  Consider the subspace $Z$ of this product consisting of 
  bi-cocycles which respect the weight filtration, and furthermore such that 
  $\beta^x$ is Frobenius-invariant, and $\alpha^x$ is $\G_m$-invariant, where $\G_m$ is viewed as a subgroup of $\G^{dR}$ via our choice of 
  weight-splitting. The space $Z$  is a functor from commutative rings to sets, and is equipped with a natural transformation 
  \begin{equation} \label{mapZtoZFlastsection} Z \To Z_{F, \alg}^1(G^{\omega'}, {}_0\Pi_0^{\omega'})\ .
  \end{equation} 
  Let us assume that  $Z, Z_{F, \alg}^1(G^{\omega'}, {}_0\Pi_0^{\omega'})$ are schemes: for instance, if $W_n\Or(G^{\omega'})$ is finite dimensional for all $n$. The fibers of $(\ref{mapZtoZFlastsection})$ are
 empty or  torsors over  $( {}_0 \Pi^{dR}_0)^{\G_m}$,  by the discussion following  $(\ref{ZPtoZG})$. This is because   the image of $Z$ in $Z^1_{\alg}( G^{dR},  \Pi^{dR})$ is   $\G_m$-invariant by definition of $Z$.
 By the same argument as in lemma \ref{lemZ1toH1}, the group $ ( {}_0 \Pi^{dR}_0)^{\G_m}$ is trivial. Therefore we have shown that the fibers  of 
 $(\ref{mapZtoZFlastsection})$ are empty or $0$-dimensional.
 
   Since ${}_x \gamma_0$ is the 
  canonical Frobenius invariant path, we deduce by  $(\ref{Z1isH1Finv})$ that 
$$\dim Z \leq  \dim  H_F^1(G^{\omega'}, {}_0\Pi_0^{\omega'})\ .$$
The identical reasoning applies for any quotient $\Pi$ of ${}_0 \Pi_0$ in the category $\mathcal{H}$ (for instance, $\Pi =  {}_0 \Pi_0/W_m$). 
  Thus the methods of $\S \ref{sect: det}$ can be applied whenever
  \begin{equation} \label{dimineqfinal} \dim   \PF ^{dR} >  \dim  H_F^1(G^{\omega'}, \Pi^{\omega'})\ .
  \end{equation} 
  
   We obtain a function vanishing on integral points by applying \S \ref{sectLinAlg}
with $V= \Or(\PF^{dR})$,  $W =   \Pe_X^{\omega, \omega'}$ and $Z$ the scheme of bi-cocycles   described above.

 \begin{rem}
 The condition $(\ref{dimineqfinal})$ differs from  
  $  \dim \big( F^0 \backslash \Pi^{\omega} \big)  >   \dim  H^1(G^{\omega'}, \Pi^{\omega'}) $
 stated in  \cite{KimAlb}, conjecture 1. 
  \end{rem}

  \subsection{Case of $\Pro^1\backslash \{0,1,\infty\}$} The previous discussion can be applied in the case $X = \Pro^1 \backslash \{0,1,\infty\}$. We  retrieve an identical version of theorem $\ref{mainthm}$
 using Coleman integration instead of the single-valued $p$-adic periods. Because the fundamental group is mixed Tate, we have
 $F^0(\Pi^{\omega})=1$
  and ${}_0H_0 = \Or( {}_0\Pi^{dR}_0)$. Therefore ${}_x 1_0$ is the canonical de Rham path $(\ref{TateCanpath})$ and ${}_0 \Pi^{dR}_0 \cong {}_0 \PF_0^{dR}$.  Inequality 
  $(\ref{dimineqfinal})$ therefore reduces to 
  $$ \dim \Pi > \dim H_{F}^1(G_S^{\omega'}, \Pi^{\omega'})\ .$$
 Under this condition   we can apply the identical reasoning to \S\ref{sectskipped} to deduce the analogue of theorem $(\ref{mainthm})$ with single-valued $p$-adic integrals replaced with 
   Coleman integrals, since the dimension computations are identical.

  \subsection{} If one assumes a version of   Beilinson's conjectures  as mentioned above, then one checks that $(\ref{dimineqfinal})$
   will be satisfied for any curve of genus $g \geq 2$ over $\Q$, for essentially the same reasons as given in \cite{KimAlb}. 
    Roughly,  the size of the graded pieces of the unipotent fundamental group grow approximately of the order $(2g)^n$ (the difference between
    $\PF$ and $\Pi$ is essentially negligeable by example \ref{extensorsF1}), and the space of motivic extensions, once the weight is sufficiently large, is expected
   to be a lattice in the space of extensions of mixed Hodge structures. This  is easily calculated, for example, by  \cite{NotesPer} (6.4). 
    Under this assumption, it  would be interesting   to carry out a precise analysis of the dimensions, as in \S\ref{sectunitexample}, and work out some explicit versions of theorem $\ref{mainthm}$, namely, write down a determinant of $p$-adic iterated 
    integrals which vanishes conjecturally on all $N$-tuples of integral points, for suitable $N$.  
    
   It would be  interesting  to ascertain whether one can use complex periods to detect integral  points on curves of higher genus, or whether this is ruled out.   For example, one might optimistically hope to take $\omega=\omega'=dR$, but this time apply the single-valued iterated integrals as defined in  \cite{NotesMot}, \S4.1, \S8.3.     Finally, the method above can also be extended to  divisors in  a very similar manner to \S \ref{sectVirtual}.

\bibliographystyle{plain}
\bibliography{main}

\end{document}